\newtheorem{tw}{Theorem}[section]
\newtheorem{prop}[tw]{Proposition}
\newtheorem{lem}[tw]{Lemma}
\theoremstyle{remark}
\newtheorem{uw}[tw]{Remark}
\newtheorem{ex}[tw]{Example}
\theoremstyle{definition}
\newcommand{\cal}[1]{\mathcal{#1}}
\newcommand{\bez}{\setminus}
\newcommand{\podz}{\subseteq}
\newcommand{\fal}[1]{\widetilde{#1}}
\newcommand{\kre}[1]{\overline{#1}}
\newcommand{\gen}[1]{\langle #1 \rangle}
\newcommand{\map}[3]{#1\colon #2\to #3}
\newcommand{\fig}[2]{\includegraphics[width=#1\textwidth]{#2}}
\newcommand{\Mob}{M\"{o}bius strip}
\begin{document}
\numberwithin{equation}{section}

\title[Subgroups generated by two Dehn twists \ldots]
{Subgroups generated by two Dehn twists on a nonorientable surface}

\author{Micha\l\ Stukow}

\thanks{Supported by grants 2012/05/B/ST1/02171 and 2015/17/B/ST1/03235 of National Science Centre, Poland.}
\address[]{
Institute of Mathematics, University of Gda\'nsk, Wita Stwosza 57, 80-952 Gda\'nsk, Poland }

\email{trojkat@mat.ug.edu.pl}


\keywords{Mapping class group, Nonorientable surface, Dehn twist, Free group} \subjclass[2000]{Primary 57N05;
Secondary 20F38, 57M99}

\begin{abstract}
Let $a$ and $b$ be two simple closed curves on an orientable surface $S$ such that their geometric 
intersection number is greater than 1. The group generated 
by corresponding Dehn twists $t_a$ and $t_b$ is known to be isomorphic to the free group of rank 2. In this paper we extend
this result to the case of a nonorientable surface.
\end{abstract}

\maketitle
 \section{Introduction}%
Let $N$ be a smooth, nonorientable, compact surface. We will mainly focus on the local properties of $N$,
hence we allow $N$ to have some boundary components and/or punctures. Let ${\cal H}(N)$ be the group of all
diffeomorphisms $\map{h}{N}{N}$ such that $h$ is the identity on each boundary component and $h$ fixes the set 
of punctures (setwise). By ${\cal M}(N)$ we denote the quotient group of ${\cal H}(N)$ by the subgroup 
that comprises 
the maps isotopic to the identity with an isotopy which fixes the boundary pointwise. ${\cal M}(N)$ is 
known as the
\emph{mapping class group} of $N$. The mapping class group ${\cal M}(S)$ of an orientable surface $S$ 
is defined analogously, but
we consider only orientation preserving maps. Usually, we will use the same letter to denote 
a map and its isotopy class.

Important elements of the mapping class group ${\cal M}(S)$ are \emph{Dehn twists}. 
Dehn twists generate ${\cal M}(S)$, thus obtaining a 
good understanding of possible relations
between them is important. One of the 
basic results in this direction is the following theorem:
\begin{tw}[Ishida \cite{Ishida}]
 If $a$ and $b$ are simple closed curves on an orientable surface $S$ such that the geometric 
 intersection number of $a$ and $b$ is greater than 1, then the group generated by Dehn twists $t_a$ and $t_b$ is free of
 rank 2.
\end{tw}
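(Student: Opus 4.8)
The plan is to realise $\langle t_a, t_b\rangle$ as the free product $\langle t_a\rangle * \langle t_b\rangle$ by a ping-pong argument for the action of $\mathcal{M}(S)$ on the set $X$ of isotopy classes of essential simple closed curves on $S$. Since $i(a,b)>1$ forces $i(a,b)\ge 2$, and in particular $i(a,b)>0$, both $a$ and $b$ are essential, so $t_a$ and $t_b$ have infinite order and $\langle t_a\rangle\cong\langle t_b\rangle\cong\field{Z}$; thus once the free-product decomposition is established we obtain $\field{Z}*\field{Z}\cong F_2$, as required.

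The analytic heart of the argument is the standard estimate describing how a twist distorts intersection numbers. Writing $n=i(a,b)$, I would first establish that for every $c\in X$ and every $k\ne 0$,
\begin{equation*}
 i(t_a^k(c),a)=i(c,a),\qquad \bigl|\, i(t_a^k(c),b)-|k|\,i(c,a)\,i(a,b)\,\bigr|\le i(c,b),
\end{equation*}
together with its mirror image under interchanging $a$ and $b$. The first equality is immediate because $t_a$ fixes $a$; the inequality is proved by isotoping $a,b,c$ into pairwise minimal position and observing that $t_a^k$ replaces each of the $i(c,a)$ strands of $c$ crossing $a$ by $|k|$ near-parallel copies of $a$, each of which meets $b$ in $i(a,b)$ points, the discrepancy being absorbed by the original arcs of $c$ meeting $b$.

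Granting this, I would set
\begin{equation*}
 X_a=\KL{c\in X\st i(c,a)>i(c,b)},\qquad X_b=\KL{c\in X\st i(c,b)>i(c,a)},
\end{equation*}
which are disjoint and nonempty since $a\in X_b$ and $b\in X_a$. The ping-pong (crossing) conditions then follow from the estimate and the hypothesis $n\ge 2$: for $c\in X_a$ and $k\ne 0$,
\begin{equation*}
 i(t_a^k(c),b)\ge |k|\,n\,i(c,a)-i(c,b)>(n-1)\,i(c,a)\ge i(c,a)=i(t_a^k(c),a),
\end{equation*}
so $t_a^k(X_a)\subseteq X_b$, and symmetrically $t_b^k(X_b)\subseteq X_a$, for all $k\ne 0$. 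The ping-pong lemma then yields $\langle t_a,t_b\rangle=\langle t_a\rangle*\langle t_b\rangle\cong F_2$.

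The main obstacle is the intersection estimate: everything hinges on controlling $i(t_a^k(c),b)$ \emph{uniformly} in $k$, and it is exactly the factor $i(a,b)$ carrying the coefficient $|k|$ that, once $i(a,b)\ge 2$, forces the strict inequality $i(t_a^k(c),b)>i(t_a^k(c),a)$ driving the ping-pong. When $i(a,b)=1$ the gap $(n-1)\,i(c,a)$ vanishes and the scheme correctly fails, in accordance with the braid relation $t_at_bt_a=t_bt_at_b$.
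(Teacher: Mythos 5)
Your proposal is correct and takes essentially the same route the paper attributes to Ishida: the intersection estimate you invoke (the standard inequality $\bigl|\,I(t_a^k(c),b)-|k|\,I(c,a)\,I(a,b)\,\bigr|\le I(c,b)$, valid on orientable surfaces) immediately gives Lemma \ref{Lem:Oriented}, and your ping-pong argument on the sets $\KL{c \st I(c,a)>I(c,b)}$ and $\KL{c \st I(c,b)>I(c,a)}$ is precisely the application of Lemma \ref{PingPong} sketched in Section \ref{sec:ex}. The only step you leave as a sketch is the proof of that estimate itself, which is the well-known Fathi--Laudenbach--Po\'enaru inequality underlying Ishida's Lemma 2.3 (and exactly the ingredient that fails on nonorientable surfaces, as the paper's Examples show), so nothing essential is missing.
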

The main goal of this paper is to extend the above result to the case of a nonorientable surface: 
see Theorem \ref{Main:thrm}. 
Let us mention that 
Dan Margalit observed that if we lift the statement of Theorem \ref{Main:thrm} to the oriented double cover $S$, then we obtain some special cases of the well-known conjecture \cite{MargLein} that two elements of the Torelli subgroup of $S$ either commute or generate a free group. For more details about this correspondence see \cite{StukowMargalit}.

The paper is organized as follows. In Section \ref{sec:Pre}, we establish some basic notation. Section 
\ref{sec:ex} contains some examples that show how the nonorientable case differs from the orientable one.
In Section \ref{sec:joinable} we recall some language introduced in \cite{Stukow_twist}, namely the notion 
of adjacent and joinable segments. Sections \ref{sec:curves}, \ref{sec:Rigid} and \ref{sec:weak} are devoted to the study of properties of curves in 
the neighborhood of $a\cup b$. The main theorem of the paper (Theorem~\ref{Main:thrm}) is proved in 
Section \ref{sec:thm}. This proof is based on five propositions (Propositions \ref{Prop:main:g3}, \ref{Main:prop:s1}, \ref{Main:prop:s3}, \ref{Main:prop:i3} and \ref{Prop:gen2:main}) that are proved in Sections \ref{sec:3} and \ref{sec:specA} to \ref{sec:2}.
 \section{Preliminaries}\label{sec:Pre}
By a \emph{circle} on $N$ we mean an oriented simple closed curve that is disjoint from the boundary of $N$. Usually,
we identify a circle with its image. If two circles $a$ and $b$ intersect, then we always assume that they intersect 
transversely. According to whether a regular neighborhood of a circle is an annulus or a \Mob, we call the 
circle \emph{two-sided} or \emph{one-sided}, respectively. 

We say that a circle is \emph{generic} if
it bounds neither a disk with fewer than two punctures nor a \Mob\ without punctures. It is known
(Corollary 4.5 of \cite{Stukow_twist}) that if $N$ is not a closed Klein bottle, then the circle $a$ is generic
if and only if $t_a$ has infinite order in ${\cal M}(N)$.

For any two circles $a$ and $b$ we define their \emph{geometric intersection number} as follows:
\[I(a,b)={\textrm{inf}}\{|a'\cap b|\,:\, \text{$a'$ is isotopic to $a$}\}.\]
We say that circles $a$ and $b$ \emph{form a bigon} if a disk exists whose boundary is the union of an arc 
of $a$ and an arc of $b$. The following proposition provides a 
useful tool for checking if two circles
are in a minimal position (with respect to $|a\cap b|$).
\begin{prop}[Epstein \cite{Epstein}]
 Let $a$ and $b$ be generic circles on $N$. Then $|a\cap b|=I(a,b)$ if and only if $a$ and $b$ do not form a 
 bigon. \qed
\end{prop}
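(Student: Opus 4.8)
The plan is to treat the two implications separately, since they differ greatly in difficulty.

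\emph{Minimal position implies no bigon.} I would argue the contrapositive. Suppose $a$ and $b$ bound a bigon. Among all bigons I would choose an \emph{innermost} one, i.e.\ a disk $D$ whose interior meets neither $a$ nor $b$; such a disk exists because the interior of any bigon meets $a\cup b$ in finitely many arcs, and an innermost such arc cuts off a smaller bigon. Pushing the arc of $a$ in $\partial D$ across $D$ to the far side of the arc of $b$ gives an isotopy of $a$ that cancels exactly the two corner intersection points and creates no new ones, so it lowers $|a\cap b|$ by $2$. Hence $|a\cap b|>I(a,b)$ and $a,b$ are not in minimal position.

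\emph{No bigon implies minimal position.} This is the substantial direction, and I would prove it by lifting to the universal cover $\map{\pi}{\fal N}{N}$. Genericity guarantees that $a$ and $b$ are essential, so (the spherical cases being excluded) $\fal N$ is a plane $\rr^2$ and every component of $\pi^{-1}(a)$ and of $\pi^{-1}(b)$ is a properly embedded line with two well-defined distinct ends. Note that orientability of $N$ plays no role here: $\fal N$ is a disk in either case, equivalently one may first pass to the orientation double cover. The planar input I would use is elementary: two properly embedded lines in $\rr^2$ that form no bigon meet in at most one point, and when they meet exactly once their four ends are linked at infinity. The heart of the argument is the claim that, if $a$ and $b$ form no bigon on $N$, then no lift $\fal a$ of $a$ and no lift $\fal b$ of $b$ form a bigon in $\fal N$; by the planar fact each such pair then meets at most once, and whether they meet is detected purely by the linking of their ends. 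To finish, I would fix a representative $a'$ isotopic to $a$ with $|a'\cap b|=I(a,b)$. By the easy direction $a'$ forms no bigon with $b$ either, so the same description applies to its lifts. A chosen lift $\fal a$ of $a$ and the corresponding lift $\fal{a}'$ of $a'$ share the same pair of ends, hence cross exactly the same lifts of $b$; counting intersections along one fundamental domain yields $|a\cap b|=|a'\cap b|=I(a,b)$.

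The hard part is exactly the descent claim, i.e.\ ruling out a bigon between lifts. Given two lifts meeting at least twice, consecutive intersection points cut off an innermost bigon $\fal D\podz\fal N$, and one must show that $\pi|_{\fal D}$ is injective, so that $\pi(\fal D)$ is an honest embedded bigon on $N$. A nontrivial identification $\pi(x)=\pi(y)$ with $x,y\in\fal D$ comes from a deck transformation, and I would rule it out by an innermost-disk argument: a self-overlap produces a strictly smaller bigon, contradicting the choice of $\fal D$, using that deck transformations act freely and preserve the ends of each lift. Once this injectivity is secured the remainder is bookkeeping.
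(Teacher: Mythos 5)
The paper contains no proof of this proposition to compare against: it is an external result quoted from Epstein \cite{Epstein} (hence the qed symbol attached to the statement itself). Your proposal is therefore not ``the same'' or ``a different'' route relative to the paper; it is an attempted reconstruction of the classical proof, and what you write is essentially the standard universal-cover argument (the one in Farb--Margalit's book). Its architecture is right: the easy direction by pushing across an innermost bigon, and the hard direction by showing no two lifts form a bigon, with the key difficulty --- injectivity of the covering projection on an innermost bigon, ruled out via freeness of the deck action --- correctly identified and essentially correctly handled.

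There are, however, two genuine gaps in the setting of this particular paper. First, $N$ is allowed to have punctures and boundary, and ``generic'' here explicitly permits curves bounding disks with two or more punctures; such a curve is null-homotopic in the underlying surface, so your claim that genericity makes $a$ and $b$ essential (and hence that their preimages are properly embedded lines in a plane) is false unless you first delete the punctures and run the argument in the punctured surface. Likewise $\partial N\neq\emptyset$ needs a word, and the closed projective plane is a real exceptional case: it carries generic (one-sided) circles, yet its universal cover is $S^2$, not $\rr^2$, so ``the spherical cases being excluded'' does not dispose of it --- a separate argument (or reduction) is required there. Second, your planar input overreaches: an arbitrary properly embedded line in $\rr^2$ need not have well-defined endpoints on a circle at infinity, and the isotopy-invariance of the ``linking of ends'' --- on which your final counting step rests, via the claim that corresponding lifts of $a$ and $a'$ share the same pair of ends --- is exactly the point that requires the hyperbolic or quasi-geodesic machinery (lifts of homotopic compact curves stay at bounded Hausdorff distance, and bounded distance controls ends only once one has such a metric picture). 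Properness alone, which is all you invoke, justifies neither statement. Both gaps are fixable with standard tools, but they are precisely where a careful write-up of this result has to do work.
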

 \section{Disappointing examples}\label{sec:ex}%
Let $a$ and $b$ be two circles in an oriented surface $S$ such that $I(a,b)\geq2$. The key observation that 
leads to the conclusion that Dehn twists $t_a$ and $t_b$ generate a free group is the following lemma:
\begin{lem}[Lemma 2.3 of \cite{Ishida}]\label{Lem:Oriented}
Assume that circles $a,b,c\subset S$ satisfy $I(a,b)\geq 2$. Then for any nonzero integer $k$ 
 \[I(c,a)>I(c,b)\quad\Longrightarrow \quad I(t_a^k(c),a)<I(t_a^k(c),b).\qed\]
\end{lem}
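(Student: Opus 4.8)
The plan is to reduce the asserted strict inequality to the standard estimate for how a power of a Dehn twist distorts geometric intersection numbers, and then to read off the conclusion from the hypothesis. First I would exploit the fact that $t_a^k$ is a homeomorphism of $S$ fixing $a$: since homeomorphisms preserve geometric intersection numbers and $t_a^k(a)=a$, we get
\[I(t_a^k(c),a)=I(t_a^k(c),t_a^k(a))=I(c,a).\]
Thus the left-hand side of the desired conclusion is simply $I(c,a)$, and the whole statement collapses to the implication $I(c,a)>I(c,b)\Rightarrow I(c,a)<I(t_a^k(c),b)$. Note also that the hypothesis forces $I(a,c)\geq 1$, since $I(a,c)>I(b,c)\geq 0$.

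The next step is to invoke the lower twist bound
\[I(t_a^k(c),b)\geq |k|\cdot I(a,b)\cdot I(a,c)-I(b,c).\]
Using $|k|\geq 1$ (as $k\neq 0$) and $I(a,b)\geq 2$, this gives
\[I(t_a^k(c),b)\geq 2\,I(a,c)-I(b,c)=I(a,c)+\bigl(I(a,c)-I(b,c)\bigr)>I(a,c),\]
where the final strict inequality is exactly the hypothesis. Combined with the first step this yields $I(t_a^k(c),a)=I(a,c)<I(t_a^k(c),b)$, which is the claim.

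The hard part will be establishing the lower twist bound itself. I would prove it by realising $a,b,c$ in pairwise minimal position (no bigons, by Epstein's proposition) and passing to the annular cover $\dasz{S}\to S$ associated with the cyclic subgroup generated by $a$; here $\dasz{S}$ is an open annulus, $a$ lifts to the core $\dasz{a}$, and $t_a$ lifts to a shear of $\dasz{S}$. The $I(a,c)$ essential crossings of $c$ with $a$ correspond to $I(a,c)$ strands of $c$ running across the annulus, and applying $t_a^k$ winds each such strand $|k|$ extra times, forcing it to meet each of the $I(a,b)$ strands of $b$ at least $|k|$ times; this produces the main term $|k|\cdot I(a,b)\cdot I(a,c)$. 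The technical crux is to show that the only crossings removable by isotopy (i.e.\ by cancelling bigons in $t_a^k(c)\cup b$) are controlled by the original configuration, contributing an error of at most $I(b,c)$; equivalently, that the winding genuinely creates the stated number of essential intersections and cannot collapse further. One could instead argue directly inside a regular annular neighbourhood of $a$, replacing the arcs of $c$ there by arcs winding $k$ times and counting their intersections with $b$, but the cover makes the bookkeeping of essential versus inessential crossings cleaner.
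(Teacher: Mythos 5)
Your proof is correct, but note that there is no proof in the paper to compare it against: the lemma is stated with the end-of-proof symbol built into the statement, being imported from Ishida \cite{Ishida} without argument. What you wrote is in fact the standard (and, up to presentation, Ishida's own) argument: since $t_a^k(a)=a$ up to isotopy, the left-hand side equals $I(c,a)$, and the twist--intersection inequality $I(t_a^k(c),b)\ge |k|\,I(a,c)\,I(a,b)-I(c,b)$ (due to Fathi--Laudenbach--Po\'enaru; see also the corresponding proposition in \cite{MargaliFarb}) yields $I(t_a^k(c),b)\ge 2\,I(a,c)-I(c,b)>I(a,c)=I(t_a^k(c),a)$, using $|k|\ge 1$, $I(a,b)\ge 2$ and the hypothesis $I(a,c)>I(c,b)$. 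Since that inequality is a standard, citable fact for orientable surfaces, your reduction is complete and airtight; your annular-cover sketch of the inequality itself is the usual proof, though, as you concede, the control of cancelling bigons is the real content there and is not carried out. One caveat worth recording: this entire strategy is irreducibly tied to orientability. The twist inequality fails on nonorientable surfaces --- in Example \ref{ex:ex1} of this paper one has $|k|\,I(a,b)\,I(a,c)-I(c,b)=2\cdot 8-4=12$ while $I(t_a(c),b)=4$ --- and this failure is precisely why the paper cannot rerun this argument and instead develops the machinery of Sections \ref{sec:joinable}--\ref{sec:thm} for the nonorientable case.
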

The above lemma allows to apply the so-called 'ping-pong lemma' (Lemma \ref{PingPong}) and easily conclude that 
$\gen{t_a,t_b}$ is a free group. 

Relations between Dehn twists and geometric intersection numbers are known to become more
complicated if we allow the surface to be nonorientable. Some results in this direction were obtained in 
\cite{Stukow_twist}, but they were too weak to prove a nonorientable version of the above lemma. The main goal
of this section is to show that there is a 
reason for this condition, namely, Lemma \ref{Lem:Oriented} is not 
true on nonorientable surfaces. Moreover, 
finding 
general families of counterexamples is possible. 
Hence, 
no easy fix seems to exist for this situation (for a nontrivial fix, see 
Propositions \ref{Prop:main:g3}, \ref{Main:prop:s1}, \ref{Main:prop:s3}, \ref{Main:prop:i3}, and \ref{Prop:gen2:main}).
\begin{ex}\label{ex:ex1}
 Let $a,b,c$ be two-sided circles indicated in Figure \ref{xr01} (shaded disks are crosscaps, that is, the 
 interiors are to be removed and the boundary points are to be identified by the antipodal map).
 \begin{figure}[h]
\begin{center}
\fig{0.7}{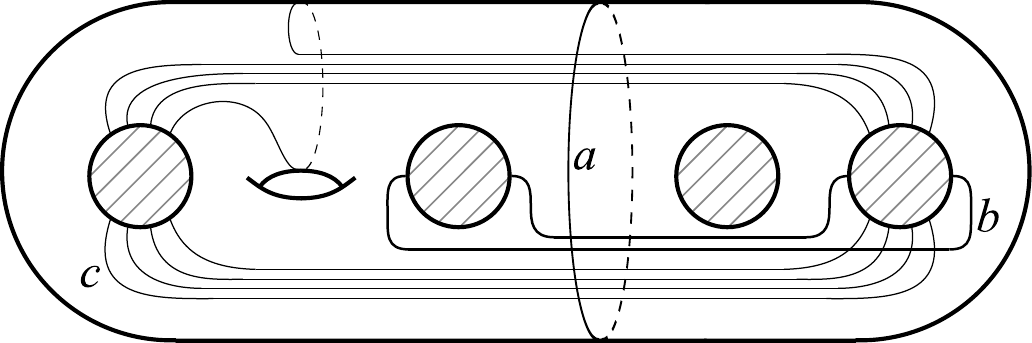}
\caption{Circles $a,b,c$ -- Example \ref{ex:ex1}.}\label{xr01} %
\end{center}
\end{figure}
In particular $I(a,b)=2$ and $I(c,a)=8>I(c,b)=4$. However, 
checking the following is straightforward:
\[I(t_a(c),a)=8>I(t_a(c),b)=4.\]
The above example can be generalized in the obvious way (by changing $c$) to the example where 
$I(a,b)=2$, $I(c,a)=2n>I(c,b)=n$ and $I(t_a(c),a)=2n>I(t_a(c),b)=n$, where $n\geq 1$.
\end{ex}
\begin{ex}\label{ex:ex2}
 Let $a,b,c$ be two-sided circles indicated in Figure \ref{xr03}. 
 \begin{figure}[h]
\begin{center}
\fig{0.6}{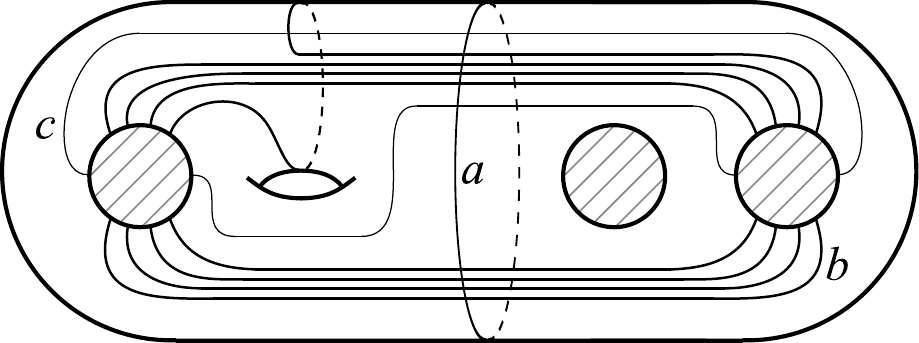}
\caption{Circles $a,b,c$ -- Example \ref{ex:ex2}.}\label{xr03} %
\end{center}
\end{figure}
In particular $I(a,b)=8$ and $I(c,a)=2>I(c,b)=1$. However, checking the following is straightforward:
\[I(t_a(c),a)=2>I(t_a(c),b)=1.\]
The above example can be generalized in the obvious way (by changing $b$) to the example where 
$I(a,b)=2n$, $I(c,a)=2>I(c,b)=1$ and $I(t_a(c),a)=2>I(t_a(c),b)=1$, where $n\geq 1$.
\end{ex}
The above examples are 
disappointing, because they show that the geometric intersection number is too weak 
to notice the action of a twist. Moreover, this situation can happen for arbitrary large complexity [that is for 
arbitrary large values of $I(a,b)$ and $I(c,a)$].
\begin{ex}\label{ex:ex3}
 Let $a,c$ be two-sided circles indicated in Figure \ref{xr02}. 
 \begin{figure}[h]
\begin{center}
\fig{0.44}{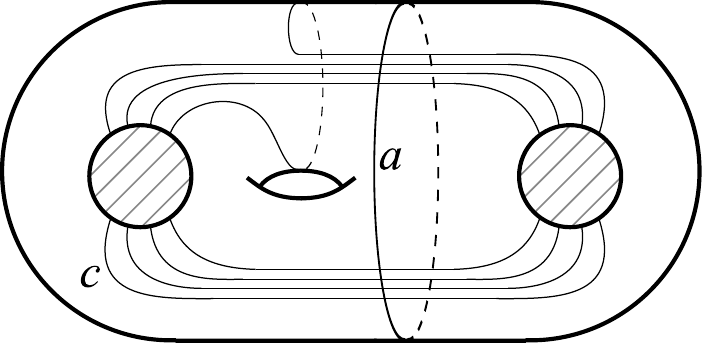}
\caption{Circles $a,c$ -- Example \ref{ex:ex3}.}\label{xr02} %
\end{center}
\end{figure}
The action of $t_a$ on $c$ is trivial because $a$ bounds a \Mob; this is the case even 
though $I(a,c)=8$ (or in general $I(a,c)=2n$, $n\geq 1$). Such a situation can not happen on an oriented surface 
$S$ -- if $I(a,c)>0$ for some curve $c$ on $S$, then $t_a$ is automatically nontrivial. 
\end{ex}
 \section{Joinable segments of $a$ and $b$}\label{sec:joinable}%
For the rest of the paper assume that $a$ and $b$ are two generic two-sided circles in 
a nonorientable surface $N$ such that $|a\cap b|=I(a,b)\geq 2$. 

Following \cite{Stukow_twist} by a \emph{segment} of $b$ (with respect to $a$) we mean any unoriented arc $p$ of $b$ that 
satisfies $a\cap p=\partial p$. Similarly, we define an \emph{oriented segment} of $b$. If $p$ is an oriented segment, then by $-p$, we denote the segment equal to $p$ as an oriented segment but with the opposite orientation. 

We call a segment $p$ of $b$ \emph{one-sided}
[\emph{two-sided}] if the union of $p$ and an arc of $a$ connecting $\partial p$ is a one-sided [two-sided] circle. An oriented 
segment is one-sided [two-sided] if the underlying unoriented segment is one-sided [two-sided].

If $P,Q\in a\cap b$ are two intersection points of $a\cap b$ consecutive on $b$, then by $PQ$ we denote an oriented segment of $b$ with endpoints $P$ and $Q$. Oriented segments $PP'$ and $QQ'$ of $b$ are called \emph{adjacent} if both are one-sided and 
an open disk $\Delta$ exists
on $N$ with the following properties
\begin{enumerate}
 \item $\partial \kre{\Delta}$ consists of the segments $PP'$, $QQ'$ of $b$ and the arcs $PQ$, $P'Q'$ of $a$;
 \item $\Delta$ is disjoint from $a\cup b$ (Figure \ref{r00}).
\end{enumerate}
\begin{figure}[h]
\begin{center}
\fig{0.85}{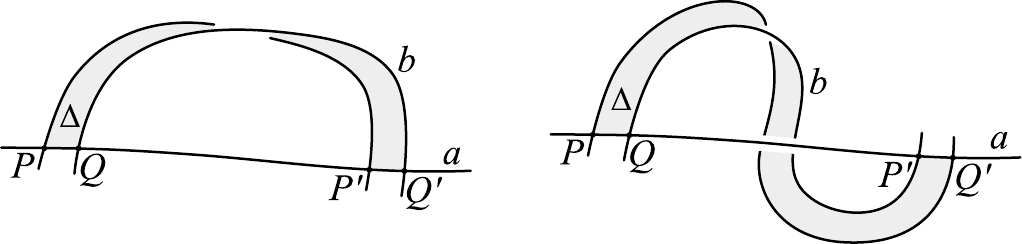}
\caption{Adjacent segments of $b$.}\label{r00} %
\end{center}
\end{figure}
Oriented segments $p\neq q$ are called \emph{joinable} if 
oriented segments $p_1,\ldots,p_k$ exist such that $p_1=p$, 
$p_k=q$ and $p_i$ is adjacent to $p_{i+1}$ for $i=1,\ldots,k-1$ (Figure \ref{r00a}).

Unoriented segments are called adjacent [joinable] if they are adjacent [joinable] as oriented segments for some choice of 
orientations.

In exactly the same way we define segments of $a$ (with respect to $b$) and their properties.

\begin{uw}\label{rem:adjac}
The main reason for the importance of adjacent/joinable segments of $b$ is that they provide natural 
reductions of the intersection points of $t_a(b)$ and $b$ (Figure \ref{r00a}). 
\begin{figure}[h]
\begin{center}
\fig{0.9}{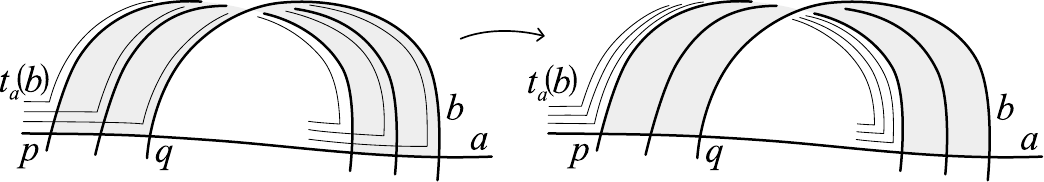}
\caption{Joinable segments of $b$ and reduction of intersection points of $b$ and $t_a(b)$.}\label{r00a} %
\end{center}
\end{figure}
In fact, as observed in \cite{Stukow_twist}, these
segments are the only nontrivial source of such reductions. 
\end{uw}
Let us recall some basic properties of joinable segments.
\begin{prop}[Lemmas 3.4, 3.7 and 3.8 of \cite{Stukow_twist}]\label {prop:prop:segments}\quad
 \begin{enumerate}
  \item Initial [terminal] points of oriented joinable segments of $b$ are on the same side of $a$.
  \item Let $p$ and $q$ be oriented segments such that $q$ begins at the terminal point of $p$ (this includes the case $q=-p$). Then $p$ and $q$ are not joinable.\qed
 \end{enumerate}
\end{prop}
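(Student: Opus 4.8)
The plan is to reduce everything to the local picture near $a$. Fix a regular neighbourhood of $a$; since $a$ is two-sided this neighbourhood is an annulus, hence orientable, so after orienting it we may speak unambiguously of the two \emph{sides} of $a$. For an oriented segment $p$ write $\mathrm{in}(p)$ for the side into which $p$ leaves $a$ at its initial point and $\mathrm{out}(p)$ for the side from which $p$ returns to $a$ at its terminal point; thus ``the initial point lies on a given side'' means $\mathrm{in}(p)$ equals that side, and similarly for terminal points. Reversing orientation interchanges these, $\mathrm{in}(-p)=\mathrm{out}(p)$ and $\mathrm{out}(-p)=\mathrm{in}(p)$, and at every point of $a\cap b$ the two germs of $b$ lie on opposite sides of $a$ because the intersection is transverse.

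For part (1) I would argue first for a single pair of adjacent segments $p=PP'$ and $q=QQ'$ and then propagate along the defining chain. The disk $\Delta$ of the adjacency definition is disjoint from $a$ except along the two boundary arcs $PQ$ and $P'Q'$ of $a$, so along the connected arc $PQ$ it lies entirely on one side of $a$; since both $p$ and $q$ emanate from $P$ and $Q$ into $\Delta$, we get $\mathrm{in}(p)=\mathrm{in}(q)$. The arc $P'Q'$ gives $\mathrm{out}(p)=\mathrm{out}(q)$ in the same way. As joinability is by definition a finite chain of adjacencies, transitivity shows that all segments in one joinable class share a common value of $\mathrm{in}$ and a common value of $\mathrm{out}$, which is exactly assertion (1).

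For part (2), observe that there are exactly two oriented segments starting at the terminal point $X$ of $p$: the reverse $-p$ and the segment $p^{+}$ of $b$ following $p$; so it suffices to rule out $q\in\{-p,p^{+}\}$. Suppose $p$ and $q$ were joinable by a chain $p=p_{1},\dots,p_{k}=q$ with adjacency disks $\Delta_{1},\dots,\Delta_{k-1}$. The idea is to glue these disks along the shared segments $p_{2},\dots,p_{k-1}$ into a single disk $\Delta$ whose boundary consists of $p$, of $q$, and of two arcs of $a$, one joining the initial points and one joining the terminal points along the chain. Because $q$ \emph{begins} where $p$ \emph{ends}, namely at $X$, the boundary of $\Delta$ runs through $X$ twice, and tracing it shows that the $b$-part of $\partial\Delta$ together with an arc of $a$ cuts off a sub-disk bounded by an arc of $b$ and an arc of $a$ meeting only at their endpoints, with interior disjoint from $a\cup b$ --- that is, a bigon. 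This already explains why $p$ cannot be \emph{directly} adjacent to such a $q$: the would-be disk degenerates at $X$ into a bigon; and the same pinch at $X$ persists in the assembled disk.

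The main obstacle is that the disks $\Delta_{i}$ need not be pairwise disjoint, so their naive union need not be embedded and the bigon produced above may be only immersed. I would handle this by an innermost-disk argument: among all joinable chains realising a configuration in which one segment begins where another ends, choose one of minimal length $k$ (equivalently, take an innermost bigon with respect to the intersections of the assembled disk with $a\cup b$ and with itself). Minimality then forces the extracted disk to be an embedded bigon, contradicting $|a\cap b|=I(a,b)$ through Epstein's Proposition, since generic circles in minimal position bound no bigon. Carrying out this reduction carefully --- showing that the overlaps can always be removed without destroying the pinch at $X$ --- is the delicate technical point; the orientation bookkeeping of part (1) enters only to guarantee that the arcs of $a$ on the two sides of the chain close up consistently.
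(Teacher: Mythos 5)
A preliminary remark: the paper itself contains no proof of this proposition --- it is quoted, already closed with a qed, from Lemmata 3.4, 3.7 and 3.8 of \cite{Stukow_twist} --- so your attempt can only be judged on its own merits. Your part (1) is correct: along each of its two $a$-arcs the adjacency disk lies on a single side of $a$, so adjacent segments have initial points on a common side and terminal points on a common side, and joinability is the transitive closure of adjacency. This is the expected argument.

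Part (2) has a genuine gap, and its central geometric claim is false. You assert that the assembled chain of adjacency disks, pinched at $X$, lets you cut off ``a sub-disk bounded by an arc of $b$ and an arc of $a$ meeting only at their endpoints'', i.e.\ a bigon, and you then invoke Epstein's criterion. But adjacency is defined only for \emph{one-sided} segments: for every $p_i$ in the chain, the circle (segment) $\cup$ (arc of $a$) is one-sided, independently of which of the two arcs of $a$ is used, because $a$ itself is two-sided. The loops into which the boundary of your assembled disk decomposes at the pinch $X$ are precisely circles of this form; being one-sided they bound M\"obius bands, never disks, so there is no bigon anywhere in the picture and Epstein's proposition cannot be applied. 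Your proof nowhere uses one-sidedness, and it is exactly one-sidedness that destroys the bigons you try to extract. The two cases you isolate in fact require different obstructions. For $q=-p$, already for a direct adjacency, the closure of the adjacency disk is a disk whose boundary traverses the arc $p$ twice; its image $\Delta\cup p\cup a$ is an embedded M\"obius band without punctures bounded by $a$, so the contradiction is with \emph{genericity} of $a$, not with minimal position. For $q=p^{+}$ one does contradict minimality, but via a pinched disk whose $a$-side and $b$-side cross at the three points $P$, $X$, $Y$; eliminating the crossing at $X$ requires an isotopy across this pinched disk, which is not an instance of the bigon criterion.

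Moreover, your ``main obstacle'' is misdiagnosed. An adjacency disk is open, disjoint from $a\cup b$, with frontier in $a\cup b$, hence it is an entire connected component of $N\setminus(a\cup b)$; distinct adjacency disks are therefore automatically disjoint, and no immersed-versus-embedded issue arises for their interiors. What genuinely makes long chains delicate is different: the same complementary disk can occur twice in a chain (serving as the adjacency disk of $p_i,p_{i+1}$ and again of $-p_{j+1},-p_j$), and the $a$-sides of consecutive disks can overlap and wrap around $a$, so the assembled band need not be embedded along $a$ and need not be a disk at all --- on a nonorientable surface it can close up into a M\"obius band. These are precisely the points that your innermost-chain sketch defers (``the delicate technical point'') and never carries out, and they cannot be resolved by the bigon mechanism you propose.
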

Still following \cite{Stukow_twist}, by a \emph{double segment} of $b$, we mean an unordered pair of two 
different oriented segments of $b$ that 
have the same initial point. Exactly $I(a,b)$ double segments exist, which correspond to intersection 
points of $a$ and $b$. 

Two double segments are called \emph{joinable} if an oriented segment $p$ exists in the first 
double segment and $q$ exists in the other such that $p$ and $q$ are joinable. 

Studying the action of a twist $t_a$ on a circle $b$ is important to obtain some obstructions for possible 
reductions of intersection points between $t_a(b)$ and $b$. The basic result in this direction is the 
following proposition:
\begin{prop}[Lemma 3.9 of \cite{Stukow_twist}] \label{prop:sing:dbl:seg}
 Suppose $I(a,b)\geq 2$. Then, for each double segment $P$, a double segment $Q\neq P$ exists, which 
 is not joinable to $P$.\qed
\end{prop}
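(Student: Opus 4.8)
The plan is to reduce the statement to an elementary counting argument about the initial points of joinable segments, exploiting the two-sidedness of $a$. First I would fix the double segment $P=\{p,p'\}$ and let $x$ be its common initial point. Since $a$ is two-sided its regular neighbourhood is an annulus, so the two sides of $a$ carry global labels $+$ and $-$; as $b$ crosses $a$ transversally at $x$, the two oriented segments emanating from $x$ leave into opposite sides, so I may assume $p$ lies on side $+$ and $p'$ on side $-$. By part (1) of Proposition \ref{prop:prop:segments} a segment can only be joinable to segments starting on the same side of $a$, so a double segment $Q=\{q,q'\}$, with $q$ on side $+$ and $q'$ on side $-$, is joinable to $P$ exactly when $q$ is joinable to $p$ or $q'$ is joinable to $p'$. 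Since each intersection point is the initial point of exactly one segment on each side, writing $A_+$ for the set of initial points of the segments joinable to $p$ (including $p$ itself) and $A_-$ for those joinable to $p'$, the goal becomes: produce an intersection point outside $A_+\cup A_-$. Such a point automatically differs from $x\in A_+\cap A_-$ and yields the required $Q\neq P$.

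The core estimate is that $|A_+|\leq\tfrac12 I(a,b)$, and symmetrically for $A_-$. The initial-point map is injective on the class of segments joinable to $p$ (there is one side-$+$ segment per intersection point), so $|A_+|$ equals the size of that class. I would next show that the terminal-point map is also injective on the class, and that the set $B_+$ of terminal points is disjoint from $A_+$. Disjointness follows from part (2) of Proposition \ref{prop:prop:segments}: the side-$+$ segment issuing from the terminal point of any class member begins at that terminal point, hence is not joinable to it, and (as $I(a,b)\geq 2$ forces distinct endpoints) is not equal to it, so it lies outside the class. Together these yield two disjoint subsets $A_+,B_+$ of the $I(a,b)$ intersection points, each of cardinality $|A_+|$, whence $2|A_+|\leq I(a,b)$.

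The step I expect to be the main obstacle is the injectivity of the terminal-point map. If two distinct class members $q_1,q_2$ ended at the same point $z$, they would be the two oriented segments of $b$ terminating at $z$; reversing the orientation of both turns them into the two oriented segments \emph{issuing} from $z$, that is, into the two members of the double segment at $z$, which leave $z$ on opposite sides of $a$. The observation that makes this work is that reversing the orientation of both segments of an adjacent pair preserves adjacency: the bounding disk and its two arcs of $a$ are unchanged, and one-sidedness is orientation-independent. Hence reversal preserves joinability, so $q_1\sim q_2$ would force the two members of the double segment at $z$ to be joinable, contradicting part (1) of Proposition \ref{prop:prop:segments}.

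Finally I would assemble the count. Since $x\in A_+\cap A_-$,
\[|A_+\cup A_-|=|A_+|+|A_-|-|A_+\cap A_-|\leq \tfrac12 I(a,b)+\tfrac12 I(a,b)-1=I(a,b)-1,\]
so at least one intersection point $x^\ast$ lies outside $A_+\cup A_-$. By the reduction in the first paragraph, the double segment $Q$ at $x^\ast$ has neither of its two segments joinable to $p$ or to $p'$, so $Q$ is not joinable to $P$; and $x^\ast\neq x$ gives $Q\neq P$, as required.
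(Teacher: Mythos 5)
Your proof is correct, but there is nothing in this paper to measure it against: Proposition \ref{prop:sing:dbl:seg} is imported from \cite{Stukow_twist} (Lemma 3.9) and stated with no proof, so what you have actually produced is a self-contained derivation of it from the other imported facts, namely parts (1) and (2) of Proposition \ref{prop:prop:segments}. The steps all check out. The initial-point map on the joinability class of $p$ is a bijection onto $A_+$ because, by part (1), every class member starts on the same side of $a$ as $p$ and each intersection point emits exactly one segment into each side of the annulus $N_a$. The terminal-point map is injective because simultaneous reversal of an adjacent pair preserves adjacency (the defining disk, its two arcs of $a$, and one-sidedness are all independent of orientation), so two distinct class members ending at the same point $z$ would reverse to the two members of the double segment at $z$ being joinable, contradicting part (1). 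Disjointness of the initial- and terminal-point sets follows from part (2), with $I(a,b)\geq 2$ ruling out a segment that begins at its own terminal point. Hence $|A_+|\leq I(a,b)/2$, likewise for $A_-$, and the pigeonhole count finishes. The natural in-paper comparison is with Proposition \ref{lem:double-double}, the strengthening to two non-joinable double segments when $I(a,b)\geq 3$, which the author proves by geometric configuration analysis (Figures \ref{r01} and \ref{r03}) rather than counting. Your method is shorter and in fact gives that stronger conclusion for free whenever $I(a,b)$ is odd (then $|A_+|,|A_-|\leq (I(a,b)-1)/2$, so at least two intersection points lie outside $A_+\cup A_-$); but for even $I(a,b)\geq 4$ it guarantees only one such point, which shows why the finer geometric argument of Section \ref{sec:joinable} cannot simply be replaced by this count.
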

 \section{Curves in the neighborhood of $a\cup b$}\label{sec:curves}%
A regular neighborhood $N_{a\cup b}$ of $a\cup b$ is fixed. Topologically, $N_{a\cup b}$ is the union of regular neighborhoods $N_a$ and $N_b$ of $a$ and $b$
respectively. By changing $N_a,N_b$ and $N_{a\cup b}$ into their closures we can assume that all these sets are closed.
If we define 
\[N_{a\bez b}=\kre{N_a\bez N_b},\ N_{b\bez a}=\kre{N_b\bez N_a},\ N_{a\cap b}=N_a\cap N_b,\]
then 
\[N_{a\cup b}=N_a\cup N_b=N_{a\bez b}\cup N_{b\bez a}\cup N_{a\cap b},\]
where each three sets on the right hand side consist of $I(a,b)$ disks with disjoint interiors. These disks correspond to the intersection points 
of $a$ and $b$ (Figure~\ref{r03a}). 
\begin{figure}[h]
\begin{center}
\fig{0.45}{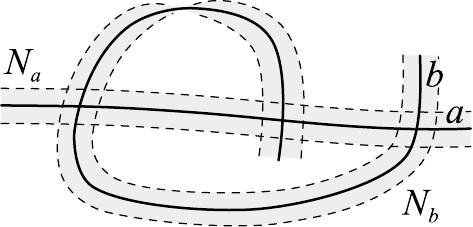}
\caption{Neighborhood of $a\cup b$ as the union of \mbox{$3\cdot I(a,b)$} rectangles.}\label{r03a} %
\end{center}
\end{figure}
We consider these disks as rectangles with two opposite sides parallel to $a$
and the other two parallel to $b$. 
The rectangles in $N_{a\bez b}$ and $N_{b\bez a}$ have a one-to-one correspondence with the segments of $a$ and $b$, respectively. 

If $r$ is one of the rectangles that constitute $N_{a\cup b}$ and $c$ is a circle in $N_{a\cup b}$ that intersects $a\cup b$ transversally, then by an \emph{arc of} $r\cap c$, we mean a connected component of $r\cap c$. 

Let ${\cal C}$ be the family of generic circles on $N$ that satisfy the following properties:
\begin{enumerate}
 \item Each circle in ${\cal C}$ is contained in $N_{a\cup b}$ and intersects $a\cup b$ transversally.
 \item Each intersection point of $c$ and $a\cup b$ is contained in $N_{a\cap b}$.
 \item If $c\in{\cal C}$ and $r$ is one of the rectangles in $N_{a\cup b}$,
 then each arc of 
 $c\cap r$ has endpoints on two different sides of $r$.
\end{enumerate}
The third condition simply means that $c$ does not turn back when crossing a rectangle. 
Each generic circle contained in $N_{a\cup b}$ is obviously isotopic to a circle in ${\cal C}$.

Let $c\in{\cal C}$ and let $r$ be one of the rectangles in $N_{a\cup b}$. If an arc of $c$ contained
in $r$ crosses both sides of $r$ parallel to $a$, then we say that $r$ contains an arc of $c$ 
\emph{parallel} to $b$.

If every rectangle in $N_{b}$ contains an arc of $c$ parallel to $b$, then we say that \emph{$c$ winds around $b$}.
Clearly, the sufficient condition for a circle $c\in{\cal C}$ to wind around $b$ is that each rectangle in 
$N_{a\cap b}$ contains an arc of $c$ parallel to $b$. 

If $r$ is a rectangle in $N_{b\bez a}$ and $r$ contains an arc $q$ of $c$ parallel to $b$,
then we say that $q$ is a \emph{segment} of $c$. Moreover, if $p$ is a segment of $b$ that corresponds to $r$, then we say that $q$ is parallel to $p$.
Similarly, we define segments of $c$ parallel to segments of $a$.
%
%
%
\begin{lem}\label{lem:ExtBigon1}
 Let $c\in {\cal C}$ such that $c$ winds around $b$, and let $a'$ be one of the components of $\partial N_a$. 
 If $|c\cap a|=I(c,a)$ and $\Delta$ is a bigon formed by $c$ and $a'$, then $\Delta\subset N_a$.
\end{lem}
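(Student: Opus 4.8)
The plan is to argue by contradiction, using Epstein's proposition together with the hypothesis $|c\cap a|=I(c,a)$, which guarantees that $c$ and $a$ bound no bigon. Since $a'$ is a boundary component of the annulus $N_a$, it is isotopic to $a$ and disjoint from it, and the two curves cobound an annulus $A'\subset N_a$ with $\partial A'=a'\cup a$. I would take $\Delta$ to be an innermost bigon, so that its interior meets neither $c$ nor $a'$; writing $\partial\Delta=\alpha\cup\beta$ with $\alpha\subset c$ and $\beta\subset a'$, this means the interior of $\Delta$ lies entirely on one side of $a'$.

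First I would show that $a\cap\Delta=\emptyset$. Indeed $a$ is generic, so it cannot be swallowed by the disk $\Delta$, and since $a\cap a'=\emptyset$ we have $a\cap\partial\Delta=a\cap\alpha\subset c\cap a$. Hence every component of $a\cap\Delta$ is an arc with both endpoints on $\alpha$. An outermost such arc $\gamma$ cuts off a subdisk $\Delta'\subset\Delta$ whose boundary is $\gamma$ together with a subarc of $\alpha$, and by outermostness the interior of $\Delta'$ is disjoint from $a$; thus $\Delta'$ is a bigon formed by $a$ and $c$, contradicting the fact that $c$ and $a$ are generic and in minimal position. Therefore $a$ does not enter $\Delta$.

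Next I would analyse the side of $a'$ on which $\Delta$ lies. If $\Delta$ lies on the side of $a'$ containing $N_a$, then, being connected, disjoint from $a$, and having its interior disjoint from $a'$, the disk $\Delta$ is trapped in the annular region $A'$ bounded by $a'$ and $a$; in particular $\Delta\subset A'\subset N_a$, as required. It remains to exclude the possibility that $\Delta$ lies on the opposite side of $a'$, i.e. outside $N_a$. In that case $\alpha$ is a subarc of $c$ lying in $N_{a\cup b}\setminus N_a=N_{b\setminus a}$ with both endpoints on $a'$. Here the winding hypothesis enters: because $c$ winds around $b$ and satisfies the defining properties of ${\cal C}$ (no intersection with $b$ outside $N_{a\cap b}$ and no turning back inside a rectangle), every arc of $c$ contained in a rectangle of $N_{b\setminus a}$ is parallel to $b$, running between the two sides of that rectangle that lie on $\partial N_a$. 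Since $\alpha$ cannot meet $\partial N_{a\cup b}$ and cannot re-enter $N_a$ (which would force it across $\partial N_a$), it is confined to a single rectangle of $N_{b\setminus a}$ and joins the two opposite sides lying in $\partial N_a$.

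Finally I would reach the contradiction. In the generic situation these two opposite sides belong to different components of $\partial N_a$, namely to $a'$ and to $a''$; then $\alpha$ would have an endpoint on $a''$, which is impossible for a bigon formed by $c$ and $a'$. The only remaining configuration is the degenerate one in which both sides lie on $a'$; but then the arc $\beta\subset a'$ closing up $\alpha$ must run around an intersection point of $a$ and $b$, forcing $\Delta$ to contain a point of $a$ and contradicting the previous step. I expect this last exterior analysis — pinning down $\alpha$ inside a single complementary rectangle and ruling out the degenerate side configuration — to be the main obstacle, since it is exactly where the nonorientable combinatorics of $N_{a\cup b}$ and the winding hypothesis must be handled with care.
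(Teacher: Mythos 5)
Your preliminary steps are sound, and in places cleaner than the paper's own reductions: proving $a\cap\Delta=\emptyset$ by an outermost-arc argument plus Epstein's criterion (with the small caveat that the outermost arc must be chosen on the side not containing $\beta$, since the cut-off disk might otherwise have $\beta$ on its boundary), and then trapping $\Delta$ in the half-annulus between $a'$ and $a$ when $\Delta$ lies on the $N_a$-side, are both correct. The fatal gap is in the exterior case, in the sentence claiming that $\alpha$ ``cannot re-enter $N_a$ (which would force it across $\partial N_a$)''. Innermostness forbids $\alpha$ from crossing $a'$ only; $\partial N_a$ has a second component $a''$, and nothing you have established prevents $\alpha$ from crossing $a''$. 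Concretely, $\alpha$ may leave the vertex $A$, cross a rectangle of $N_{b\setminus a}$ whose far side lies on $a''$, enter $N_a$ there, travel through $N_a$ without meeting $a$ or $a'$ (an arc of type C or D in the paper's classification, compatible with $c\in\mathcal{C}$, with $a\cap\Delta=\emptyset$, and with winding), exit through $a''$ again, and only later return to $B\in a'$. So $\alpha$ need not lie in a single rectangle of $N_{b\setminus a}$, and your dichotomy (``endpoint on $a''$, impossible'' versus the degenerate one-rectangle configuration) does not exhaust the possibilities: $\alpha$ does not terminate when it meets $a''$, it simply crosses it and continues. This missing configuration is precisely the main case of the paper's proof (the case $r_A\neq r_B$ there), and it is the only place where the hypothesis that $c$ winds around $b$ is genuinely needed: the paper analyzes the continuations $c_A$, $c_B$ of $c$ past the two vertices into $N_a$, and uses the arcs of $c$ parallel to $b$ that winding guarantees inside the two squares of $N_{a\cap b}$ containing the vertices (together with the fact that $c$ cannot cross the interior of the $a'$-side of the bigon) to block $c_A$ and $c_B$, forcing each of them to cross $a$; this produces a bigon between $c$ and $a$, contradicting $|c\cap a|=I(c,a)$. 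Your proof, by contrast, invokes winding only to conclude that arcs of $c$ in rectangles of $N_{b\setminus a}$ run parallel to $b$ between the two $\partial N_a$-sides -- but that already follows from $c\in\mathcal{C}$ alone (the other sides of such a rectangle lie on $\partial N_{a\cup b}$, which $c$ cannot reach), so the winding hypothesis never actually enters your argument; since the lemma needs it, this is a structural sign that a case has been lost.

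Two smaller problems. First, the lemma asserts the conclusion for an arbitrary bigon $\Delta$, and passing to an innermost bigon proves the statement only for innermost ones: an innermost bigon inside $\Delta$ could lie in $N_a$ even when $\Delta$ does not, so one needs reductions that preserve the property of not being contained in $N_a$, which is how the paper proceeds. Second, in your degenerate case the conclusion ``forcing $\Delta$ to contain a point of $a$'' is not justified: $a$ lies in the interior of $N_a$, and $\Delta$ could a priori reach it only through $a''$, which is exactly what was not ruled out. That case can instead be closed by observing that $\alpha\cup\beta$ is freely homotopic to the simple closed curve formed by the segment of $b$ running through $r$ and an arc of $a$; if $\alpha\cup\beta$ bounded a disk, that curve would be null-homotopic and hence would bound a disk, so either a one-sided circle would bound a disk or $a$ and $b$ would form a bigon -- both impossible since $|a\cap b|=I(a,b)$.
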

\begin{proof}
 Suppose that a bigon $\Delta$ with sides $p\subset a'$ and $q\subset c$ exists, which is not contained 
 in $N_a$. If $\Delta$ and $N_a$ are on the same side of $p$ (Figure \ref{r11}), then we can 
 find a smaller bigon $\Delta'\subset \Delta$ with sides $p'\subset \partial N_a$ and $q'\subset c$ such that
 $\Delta'$ and $N_a$ are on different sides of $p'$ (because $\Delta\setminus N_a\neq \emptyset$). 
 \begin{figure}[h]
\begin{center}
\fig{0.9}{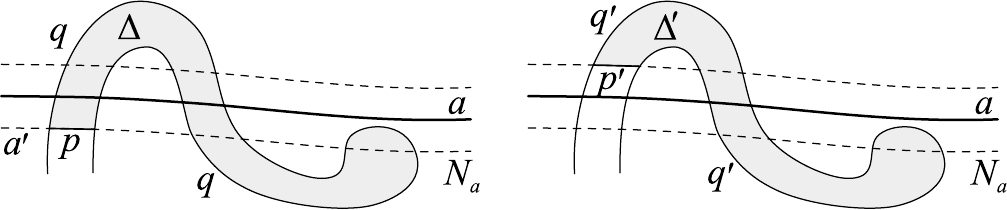}
\caption{Case of $\Delta$ and $N_a$ being on the same side of $a'$, Lemma \ref{lem:ExtBigon1}.}\label{r11} %
\end{center}
\end{figure}
 Hence we can assume that $\Delta$ and $N_a$ are on different sides of $p$.
 
 If $c$ intersects the interior of $p$, then we can pass to a smaller bigon that is still not contained in $N_a$. Hence, we can assume that $p\cap c$ consists of two points $A$ and $B$ (note that $q$ may still intersect $a'$).
 
 Let $r_A$ and $r_B$ be rectangles of $N_{a\cap b}$ that correspond to $A$ and $B$, respectively. If $r_A=r_B$, then 
 the segments of $q$ that start at $A$ and $B$ terminate at the same rectangle of $N_{a\cap b}$ (Figure \ref{r11a}).
 Hence, we can pass to 
 a smaller bigon $\Delta'\subset \Delta$ by removing these segments of $q$. The obtained bigon $\Delta'$ is still not 
 contained in $N_a$ because this would imply that $c$ is not in ${\cal C}$ ($c$ would need to turn back in one 
 of the rectangles of $N_a$). 
 \begin{figure}[h]
\begin{center}
\fig{0.99}{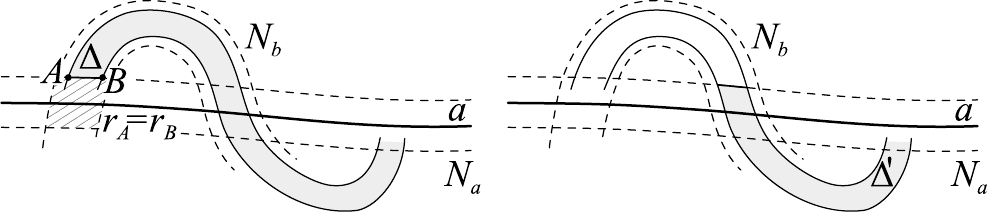}
\caption{The case of $r_A=r_B$, Lemma \ref{lem:ExtBigon1}.}\label{r11a} %
\end{center}
\end{figure}
 Hence we can assume that $r_A\neq r_B$. 
 
 Let $c_A,c_B\subset N_a$ be arcs of $c$ that start at $A$ and $B$, respectively.
 
 Recall that we assumed that $c$ winds around $b$. Hence, $r_A$ and $r_B$ contain arcs of $c$ that are 
 parallel to $b$. Therefore, $c_A$ either crosses $a$ in $r_A$ or $c_A$ turns in $r_A$ in the direction 
 of $p$, and after running parallel to $p$, $c$ must turn and cross $a$ in $r_B$. In fact,
 $c$ cannot turn towards $p$ and it cannot cross $r_B$ because $r_B$ contains an arc of $c$ parallel to $b$ (Figure \ref{r12}(i)).
 \begin{figure}[h]
\begin{center}
\fig{0.93}{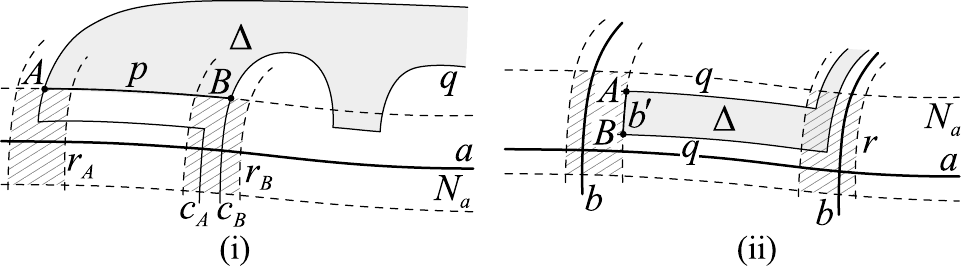}
\caption{Configurations of arcs, Lemmas \ref{lem:ExtBigon1} and \ref{lem:ExtBigon:b:side}.}\label{r12} %
\end{center}
\end{figure}
 Similar analysis applied to $c_B$ shows that the arc of $c\cap N_a$ that contains $c_B$ must intersect $a$; 
 it can do it in $r_B$, or in $r_A$ after running parallel to $p$. 
 However, this implies that $c$ and $a$ form  
 a bigon, which is a contradiction.
%
%
%
\end{proof}
\begin{lem}\label{lem:ExtBigon:b:side}
 Let $A$ and $B$ be the endpoints of an arc $b'$ contained in one of the components of $\partial N_b\cap N_a$. Let $c\in {\cal C}$ be such that $c$ winds around $a$, and let $q\subset c$ be an arc with endpoints $A$ and $B$ which starts and ends on the same side of the component of $\partial N_b$ containing $b'$. Then $b'$ and $q$ do not form a bigon with interior disjoint from $b\cup c$.
%
%
%
\end{lem}
\begin{proof}
Suppose to the contrary that $b'$ and $q$ form a bigon $\Delta$ with interior disjoint from $b\cup c$
and 
consider the arcs of $q$ that start at $A$ and $B$ [Figure \ref{r12}(ii)]. If these arcs enter
 some rectangle $r$ of $N_{a\cup b}$, then they must be parallel in $r$, that is, they are disjoint and 
 intersect the same sides of $r$. Clearly, this condition is true for rectangles in $N_{a\cup b}\bez N_{a\cap b}$ (since $c\in {\cal C}$), and for rectangles in $N_{a\cap b}$, this follows from our assumptions that the interior of $\Delta$ is disjoint from $b\cup c$ and that $c$ winds around $a$.
 However, this implies that the arcs of $q$ which start $A$ and $B$ will never meet.
\end{proof}
Let $c\in {\cal C}$ and $p$ be one of the arcs of $c\cap N_a$. Four different possible 
configurations of $p$ exist [Figure \ref{r13}] and are referred to as types~A--D.
\begin{figure}[h]
\begin{center}
\fig{0.96}{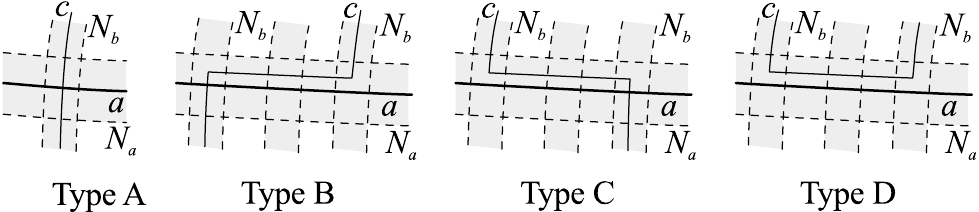}
\caption{Possible configurations of arcs of $c\cap N_a$.}\label{r13} %
\end{center}
\end{figure}
\begin{uw}\label{uw:sig_inter_rect}
 If $c$ winds around $b$, then arcs of $c\cap N_a$ of types B--D can pass through only one
rectangle in $N_{a\bez b}$. Otherwise, $c$ would intersect itself.
\end{uw}
%
\section{Rigidity of circles in ${\cal C}$}\label{sec:Rigid}
\begin{uw}
As we mentioned in Remark \ref{rem:adjac}, adjacency between segments of $b$ is the only nontrivial source of reductions of the intersection points between $b$ and $t_a(b)$. However, if we consider the intersection points between $b$ and $t_a(c)$ for $c\in{\cal C}$, then other kinds of reductions exist. For example, if $\Delta$ is a component of $N\bez N_{a\cup b}$, which is a disk, then $t_a(c)$ and $b$ may reduce along $\Delta$ (Figure \ref{r00b}).
\begin{figure}[h]
\begin{center}
\fig{0.94}{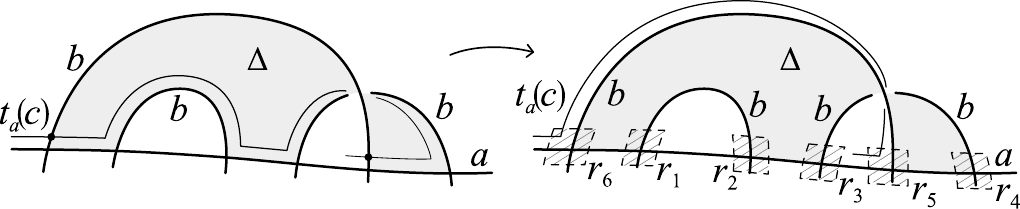}
\caption{Exterior hexagon $\Delta$ and the reduction of intersection points between $b$ and $t_a(c)$.}\label{r00b} %
\end{center}
\end{figure}
As we will see later, this type of reduction is rather exceptional, but we need additional definitions 
to control it. 
\end{uw}
Suppose that $\Delta$ is a component of $N\bez N_{a\cup b}$, which is a disk. If the boundary of $\kre{\Delta}$ intersects exactly $n$ boundaries of rectangles $r_1,r_2,\ldots,r_n$ in $N_{a\cap b}$, then we say that $\Delta$ is an \emph{exterior $n$-gon} with \emph{vertices} $r_1,r_2,\ldots,r_n$ (Figure~\ref{r00b}). Note that $r_1,\ldots,r_n$ does not need to be pairwise distinct ($\kre{\Delta}$ may intersect $r_i$ in each of its four corners).

Let $p$ be an arc of $c\in{\cal C}$, which is parallel to $b$ in a rectangle $r$ of $N_{a\cap b}$. Fix some orientation of $p$ and follow $p$ to the rectangle $r_1$ of $N_{a\cap b}$ following $r$. We say that $p$ is \emph{rigid} in $r$ with respect to $b$ if
$p$ is parallel to $b$ in $r_1$. 
Equivalently (from the perspective of $p$ intersecting $N_a$), $p$ is of type A in $r$ and $r_1$.


We say that $c\in{\cal C}$ \emph{winds strongly} around $b$ if for every rectangle $r$ in $N_{a\cap b}$
an oriented arc $p$ that is parallel to $b$ in $r$ exists, such that both $p$ and $-p$ are rigid in $r$.
Equivalently, for each three rectangles $r_1,r_2,r_3$ of $N_{a\cap b}$, which are consecutive along $b$, an arc of $c\cap N_{b}$ exists, which is parallel to $b$ in $r_1, r_2$, and $r_3$. 
In particular, if a circle $c\in{\cal C}$ winds strongly around $b$, then $c$ winds around $b$.


%
%

Let $R$ be a double segment of $b$ and let $p\neq q$ be oriented segments of $b$ that are not contained in $R$ and do not start at the same point of $a\cap b$. Assume also that $p$ and $q$ start on different sides of $a$. If we fix an orientation of $N_a$, then two possible mutual positions of $p,q$, and $R$ exist (Figure \ref{r11b}). 
\begin{figure}[h]
\begin{center}
\fig{0.7}{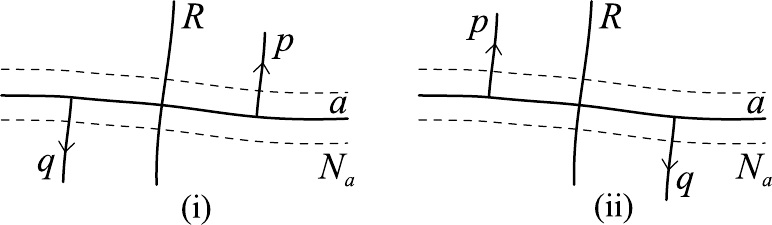}
\caption{Two possible orientations of $\{p,q,R\}$.}\label{r11b} %
\end{center}
\end{figure}
We say that the triple $\{p,q,R\}$ is \emph{positively oriented} if the configuration is as in Figure \ref{r11b}(i).

As we will see later, some special configurations of generic two-sided circles $a,b$ exist in $N$, which will require additional analysis (Sections \ref{sec:specA}--\ref{sec:2}). These special configurations are defined by the following properties:
\begin{itemize}
  \item[(S1)] $I(a,b)\geq 4$ and oriented segments $p,q$ of $b$ exist, which start on different sides of $a$ such that each
 double segment of $b$ contains an oriented segment joinable to $p$ or $q$ (see Figure \ref{r21} in Section \ref{sec:specA}).
 \item[(S2)] $I(a,b)\geq 4$ and oriented segments $p,q$ of $b$, and a double segment $R$ of $b$ exist, such that: $a$ and $b$ are not in the special position (S1), $p$ and $q$ start on different sides of $a$, $p$ starts and terminates on different sides of $a$, each double segment of $b$ different from $R$ contains an oriented segment joinable to $p$ or $q$, $\{p,q,R\}$ is positively oriented (see Figure \ref{r21} in Section \ref{sec:specA}).
 \item[(S3)] $I(a,b)\geq 4$ and there are oriented segments $p,q$ of $b$ and a double segment $R$ of $b$ such that: $p$ starts and terminates on one side of $a$, $q$ starts and terminates on the other side of $a$,
 each double segment of $b$ different from $R$ contains an oriented segment joinable to $p$ or $q$, $\{p,q,R\}$ is positively oriented (see Figure~\ref{r21b} in Section \ref{sec:specB}).
\end{itemize}
If one of the ordered pairs of circles $(a,b)$ or $(b,a)$ satisfies one of the above conditions, then we say that the unordered pair  $\{a,b\}$ is \emph{special}. 

Let $X_a$ be the set of isotopy classes of circles $c$ in $N$, which satisfy the following conditions:
\begin{enumerate}
 \item $c\in {\cal C}$,
 \item $I(c,a)=|c\cap a|$, $I(c,b)=|c\cap b|$,
 \item $I(c,a)<I(c,b)$,
 \item $c$ winds strongly around $a$.
\end{enumerate}
Similarly, we define $X_b$ by requiring (1)--(2) above and additionally
\begin{enumerate}
 \item[(3')] $I(c,b)<I(c,a)$,
 \item[(4')] $c$ winds strongly around $b$.
\end{enumerate}
 \section{The case of $I(a,b)\geq 4$ and $\{a,b\}$ not special} \label{sec:3}
The main goal of this section is to prove the following:
\begin{prop}\label{Prop:main:g3}
 Let $a$ and $b$ be two generic two-sided circles in $N$ such that $I(a,b)\geq 4$ and $\{a,b\}$ is not special. Then for any integer $k\neq 0$ we have
 \[t_a^k(X_b)\podz X_a\quad\text{and}\quad t_b^k(X_a)\podz X_b.\]
\end{prop}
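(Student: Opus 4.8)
The plan is to prove the two inclusions $t_a^k(X_b)\podz X_a$ and $t_b^k(X_a)\podz X_b$ symmetrically, so I would focus on the first one and note that the second follows by interchanging the roles of $a$ and $b$. Fix $c\in X_b$, so that $c$ winds around $b$ and $I(c,b)<I(c,a)$. Let $c'$ be the circle $t_a^k(c)$, arranged to lie in $\mathcal{C}$. The heart of the argument is to show that applying $t_a^k$ strictly decreases the intersection with $a$ and strictly increases the intersection with $b$, while the winding property gets transferred from $b$ to $a$. Concretely, I would first record that $I(t_a^k(c),b)=I(c,b)$, since $t_a^k$ is a homeomorphism and does not change the isotopy class relationship of $c$ with $b$ in a way that affects $I(c,b)$ --- more precisely I expect $t_a^k$ to act by inserting $|k|$ parallel copies of $a$ along each arc of $c$ crossing $N_a$, and the delicate point is controlling what this does to intersection numbers with both $a$ and $b$.

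The main computation is to count $|t_a^k(c)\cap a|$ and $|t_a^k(c)\cap b|$ after putting $t_a^k(c)$ in minimal position and to verify these counts equal $I(t_a^k(c),a)$ and $I(t_a^k(c),b)$ respectively. First I would establish that $t_a^k(c)$ winds around $a$: each arc of $c$ that passed through a rectangle of $N_a$ gets wrapped $k$ times around $a$, so every rectangle of $N_a$ now contains an arc of $t_a^k(c)$ parallel to $a$, which is exactly condition (4) for membership in $X_a$. Next, for the intersection counts, the key tool is Epstein's proposition: I must show that $t_a^k(c)$ and $a$ form no bigon, and that $t_a^k(c)$ and $b$ form no bigon. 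For the bigon with $a$, I would invoke Lemma~\ref{lem:ExtBigon1} together with the winding-around-$b$ hypothesis, arguing that any bigon would have to be confined to $N_a$ and then ruled out by the type A--D analysis of the arcs of $c\cap N_a$ displayed in Figure~\ref{r13}.

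The genuinely hard part is the inequality $I(t_a^k(c),a)<I(t_a^k(c),b)$, which is the nonorientable replacement for Lemma~\ref{Lem:Oriented} --- and the whole point of Section~\ref{sec:ex} is that the naive statement fails, so this inequality cannot follow from a soft argument. Here I would analyze the arcs of $c\cap N_a$ by type and track how each contributes to $|t_a^k(c)\cap a|$ versus $|t_a^k(c)\cap b|$ after twisting. The reductions of intersection points are governed precisely by joinable segments (Figure~\ref{r00a}), so the available cancellations between $t_a^k(c)$ and $a$ are limited by the joinability structure of the double segments of $b$. This is exactly where Propositions~\ref{prop:sing:dbl:seg} and~\ref{lem:double-double} enter: the existence of \emph{two} double segments not joinable to a given one forces enough non-cancelling intersections to keep the count with $a$ strictly below the count with $b$.

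The subtlest subcase is the pathological one singled out in Lemma~\ref{LemSpecialPatology}, where there are only two joinability classes of double segments. In that situation the generic counting can degenerate, and I would use Lemma~\ref{LemSpecialPatology} directly: it guarantees that among the arcs following $p'$ or $q'$ there is one of type A or B whose successor is not parallel to $s$, which prevents $c$ from winding indefinitely parallel to the type-C arc $s$ and thereby furnishes the extra intersection points with $b$ that are needed to strictly exceed the intersections with $a$. I expect this degenerate case, rather than the generic count, to be the true obstacle, and assembling these local contributions into the global strict inequality while simultaneously verifying minimality of position (so that $|t_a^k(c)\cap a|$ and $|t_a^k(c)\cap b|$ are the actual geometric intersection numbers) is where the bulk of the careful work lies.
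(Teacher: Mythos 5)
Your opening ``record'' is false, and fatally so. It is not true that $I(t_a^k(c),b)=I(c,b)$: a homeomorphism preserves intersection numbers only of pairs it transports, so $I(t_a^k(c),t_a^k(b))=I(c,b)$, and since $t_a^k(a)=a$ the identity you actually get is $I(t_a^k(c),a)=I(c,a)$ --- nothing at all about $I(t_a^k(c),b)$. Worse, if your claim were true it would refute the proposition rather than prove it: combined with the correct identity it gives $I(t_a^k(c),b)=I(c,b)<I(c,a)=I(t_a^k(c),a)$, i.e.\ $t_a^k(c)\notin X_a$. The entire content of the proposition is that twisting about $a$ makes the intersection with $b$ grow (before any reduction, the paper's explicit construction of $d=t_a^k(c)$ has $|d\cap b|=I(c,a)\cdot I(a,b)\cdot|k|$), while the intersection with $a$ stays \emph{equal} to $I(c,a)$; your phrase ``strictly decreases the intersection with $a$'' has the same confusion. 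Relatedly, minimality with respect to $a$ is automatic from the construction; the hard minimality is with respect to $b$, which is the opposite of the emphasis in your sketch.

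Beyond this, what you have written is a correct table of contents for the paper's argument rather than an argument: you name the right lemmas (Lemma \ref{lem:ExtBigon1}, Propositions \ref{prop:sing:dbl:seg} and \ref{lem:double-double}, Lemma \ref{LemSpecialPatology}) and the right places where they should enter, but the step you defer --- ``assembling these local contributions into the global strict inequality while simultaneously verifying minimality of position'' --- \emph{is} the proof. Concretely, the missing machinery is: (i) the explicit three-stage reduction of $d$ (type I: turn-backs inside rectangles of $N_{a\cap b}$, coming from type C arcs; type II: bigons over two-sided segments of $b$; type III: bigons over one-sided segments of $b$), with a verification after each stage that the curve stays in ${\cal C}$ and \emph{still winds around $a$} (for type III this itself needs Proposition \ref{prop:sing:dbl:seg}); (ii) the proof that the fully reduced curve $d_3$ forms no bigon with $b$, so that the counts are genuine intersection numbers; and (iii) the count itself: arcs of $c\cap N_a$ of types A--C correspond bijectively to arcs of $d_3\cap N_a$ meeting $a$, each type A/B arc forces at least two intersections of $d_3$ with $b$ by Proposition \ref{lem:double-double}, each type C arc either still meets $b$ or is injectively matched (via Lemma \ref{LemSpecialPatology}) to a type A/B arc, and strictness comes from the existence of unmatched type A arcs because $c$ winds around $b$. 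Without (i)--(iii) there is no proof, and with the false starting identity the plan as stated cannot be completed.
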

 \begin{proof}
 Of course 
 proving that $t_a^k(X_b)\podz X_a$ is sufficient. No canonical choice exists for the 
 orientation of the neighborhood $N_a$. However, in our figures, we will assume that $t_a^k$ twists to the right.

 
 \emph{Construction of $t_a^k(c)$.}
 Fix a circle $c\in X_b$. It is enough to prove that $t_a^k(c)\in X_a$.
 
 We begin by constructing the circle $d=t_a^k(c)$. Outside $N_a$ and on each arc of $d\cap N_a$ of type D, $d$ 
 is equal to $c$. For each arc of $c\cap N_a$ 
 of types A--C, $d$ circles $|k|$ times around $a$. In particular $d$ winds around $a$ and 
 \[\begin{aligned}
   I(d,a)&=|d\cap a|=|c\cap a|=I(c,a)\\
   |d\cap b|&=I(c,a)\cdot I(a,b)\cdot |k|.
   \end{aligned}
\]
  Now the problem is that $d$ may not be an element of ${\cal C}$ and $d$ does not need to be in a minimal 
  position with respect to $b$. 
 
 Before we start to reduce $d$, observe that if an arc of $d$ enters $N_a$ and turns to the left, then
 after passing through one rectangle in $N_{a\bez b}$ it must 
 turn back or leave $N_a$ through the same side of $N_a$ as it entered $N_a$ (Figure \ref{r08}). 
 \begin{figure}[h]
 \begin{center}
\fig{0.6}{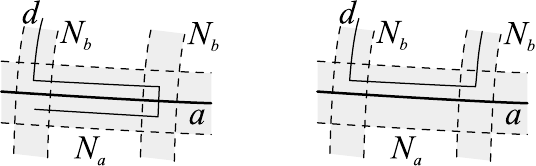}
 \caption{Arcs of $d$ turning to the left in $N_a$.}\label{r08} %
 \end{center}
 \end{figure}
 In fact, arcs 
 of $d$ turning to the left in $N_a$ came from arcs of $c\cap N_a$ of types C and D. As we observed in Remark \ref{uw:sig_inter_rect}  
 such arcs can pass through only one rectangle in $N_{a\bez b}$.

 \emph{Reduction of type I.}
 Suppose that one of the rectangles $r$ in $N_{a\cup b}$ contains an arc $p$ of $d$ such that the endpoints of $p$ are on the same
 side of $r$ ($d$ turns back in $r$). Clearly, this situation cannot happen for $r$ being one of the rectangles in $N_{b\bez a}$ (because in such a rectangle, 
 $d$ coincides with $c$) or $r$ being a rectangle in $N_{a\bez b}$ (by construction $d$ runs parallel to $a$ in 
 each such rectangle). Hence, $r$ must be a rectangle in $N_{a\cap b}$ and $p$ must intersect the $b$-side of $r$ (otherwise
 $c$ would not be an element of ${\cal C}$). Hence, we have the situation illustrated in Figure \ref{r04}, and
  we can replace $d$ with the circle $d'$ shown in the same figure. 
 \begin{figure}[h]
 \begin{center}
\fig{0.6}{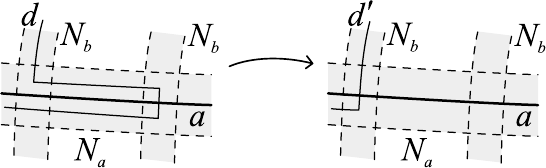}
 \caption{Reduction of type I.}\label{r04} %
 \end{center}
 \end{figure}
 In such a case, we say that we reduced $d$ by a \emph{reduction of type I}. 
 
 Reductions of type I correspond to arcs of $c\cap N_a$ of type C. Hence, on each arc of 
 $d\cap N_a$, at most one reduction of type I exists.

 
 Let $d_1$ be the circle obtained from $d$ by performing all possible reductions of type I. In particular, 
 $d_1\in{\cal C}$. 
 \begin{uw}\label{uw:arc:turn:left:D}
  The only arcs of $d_1\cap N_a$ that turn to the left after 
 entering $N_a$ are arcs that correspond to (in fact, equal to) arcs of $c\cap N_a$ of type D.
 \end{uw}
 
 We now argue that $d_1$ winds around $a$. In fact, if we fix a rectangle $r$ in $N_{a\cap b}$ 
 and $r'$ is another rectangle in $N_{a\cap b}$, then (because $c$ winds around $b$) $r'$ contains an arc $q$ of $c$
 parallel to $b$ (Figure \ref{r07}). 
 \begin{figure}[h]
 \begin{center}
\fig{0.65}{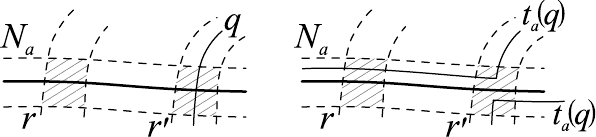}
 \caption{Action of $t_a$ on an arc $q$ in a rectangle $r'$.}\label{r07} %
 \end{center}
 \end{figure}
 Now $t_a^k(q)$ is strictly monotone in $N_a$ with respect to $a$. Hence, this arc does not admit a reduction of type I. In particular, $t_a^k(q)$ gives an arc of $d_1$, which is parallel to $a$ in $r$.

 For further reference, note the following observation:
 \begin{lem}\label{lem:d:eq:c}
 Let 
 $q\subset d_1$ be an arc with endpoints $A,B\in \partial N_a$ such that $q\cap b=\emptyset$. Assume also that $q$ intersects both $N_{a\bez b}$ and $N_{b\bez a}$. 
Then $q$ is an arc of $c$. 
 \end{lem}
\begin{proof}
By construction, $d_1$ coincides with $c$ in every rectangle of $N_{b\bez a}$ and if $q$ enters $N_a$ in a rectangle $r$ (Figure \ref{r15c}), then 
$q$ must leave $r$ through the $b$-side of $r$ given that $d_1$ winds around $a$. 
\begin{figure}[h]
 \begin{center}
\fig{0.8}{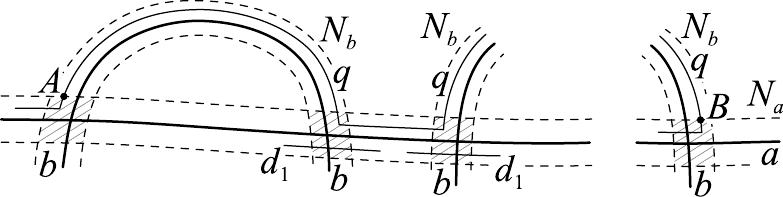}
 \caption{Configuration of arcs -- Lemma \ref{lem:d:eq:c}.}\label{r15c} %
 \end{center}
 \end{figure}
Afterwards, $q$ goes through one rectangle of $N_{a\bez b}$, and then it must turn and leave $N_a$ (because $q$ cannot intersect $b$). Hence, each arc of $q\cap N_a$ is an arc of type D, which proves that $q$ is an arc of $c$. 
\end{proof}
 
 \emph{Reduction of type II.}
Suppose that there exist arcs $p$ and $q$ of $b$ and $d_1$, respectively, such that 
\begin{itemize}
 \item $p$ and $q$ form a bigon with interior disjoint from $b\cup d_1$,
 \item $p\bez N_{a\cap b}$ is a subarc of a two-sided segment of $b$.
\end{itemize}
In such a case, we can remove the bigon formed by $p$ and $q$ (Figure~\ref{r05}), and we say that we reduced $d_1$ to 
$d_1'$ by a \emph{reduction of type II}.
 \begin{figure}[h]
 \begin{center}
\fig{0.97}{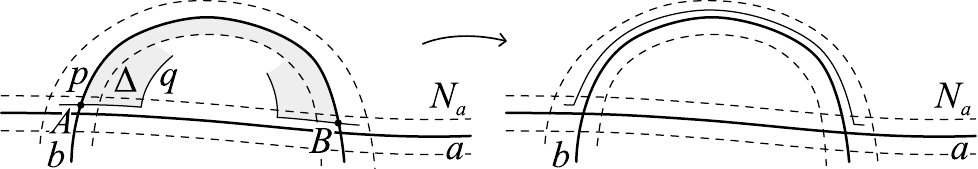}
 \caption{Reduction of II.}\label{r05} %
 \end{center}
 \end{figure}
 
Let us describe the possible reductions of type II in more detail. Let $A$ and $B$ be vertices of the bigon $\Delta$ 
formed by $p$ and $q$, and let $r_A,r_B$ be the rectangles of $N_{a\cap b}$ that contains $A$ and $B$, respectively. 
By Lemma \ref{lem:ExtBigon:b:side}, the arc $q_A\subset q$, which connects $A$ with the boundary of $N_a$, is either entirely contained in $r_A$ or it passes through one rectangle of $N_{a\bez b}$ and then leaves $N_a$ (it can pass at most one rectangle of $N_{a\bez b}$ because otherwise it would intersect $b$). In other words the situation illustrated in Figure \ref{r05a} is not possible. 
\begin{figure}[h]
 \begin{center}
\fig{0.27}{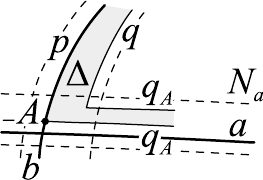}
 \caption{Types II and III reductions -- impossible configuration of arcs.}\label{r05a} %
 \end{center}
 \end{figure}
The same is true for the arc $q_B\subset q$, which connects $B$ with the boundary of $N_a$. For the same reason, either both arcs $q_A,q_B$ are entirely contained in $r_A,r_B$, or each of them passes through one rectangle of $N_{a\bez b}$.

Given that we assume that $p$ corresponds to a two-sided segment of $b$, at one endpoint of $q$, say $B$, $q$ turns to the left as it enters $N_a$.  We claim that $q$ consists of $q_A,q_B$ and a single segment of $c$ parallel to $b$. In order to prove this statement, showing that $q_B$ is entirely contained in $r_B$ is sufficient.  Suppose to the contrary that $q_B$ passes through a rectangle of $N_{a\bez b}$
(Figure \ref{r10}). 
\begin{figure}[h]
 \begin{center}
\fig{0.99}{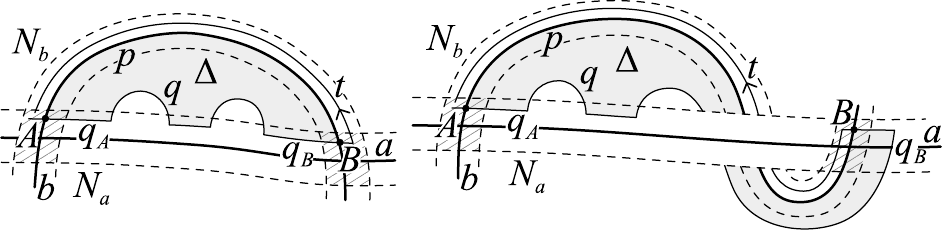}
 \caption{Reduction of type II -- impossible configurations of arcs.}\label{r10} %
 \end{center}
 \end{figure}
Given that $q_B$ turns to the left after entering $N_a$, by Remark \ref{uw:arc:turn:left:D}, this arc must be an arc of type D. Hence, after crossing $p$ in $B$ it
must follow an arc $t$ of $d_1$, which turns left in $r_B$ and leaves $N_a$ (it must leave $N_a$ in $r_B$ 
by Remark \ref{uw:sig_inter_rect}).
After leaving $N_a$, $t$ 
runs parallel to $p$ in a rectangle of $N_{b\bez a}$. In particular, $q_B$ and $t$ are arcs of $c$ [given that $(q_B\cup t)\cap N_a$ is an arc of type D]. Moreover, by Lemma \ref{lem:d:eq:c}, $q\bez (q_A \cup q_B)$ is also an arc of $c$. Hence, $(q\bez q_A)\cup t\subset c$ and an arc of $\partial N_a$ form a bigon $\Delta'$ not contained in $N_a$ (Figure \ref{r10a}). This finding contradicts Lemma \ref{lem:ExtBigon1}.
\begin{figure}[h]
 \begin{center}
\fig{0.99}{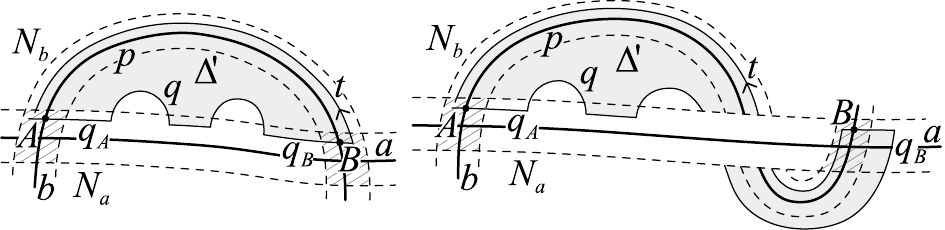}
 \caption{Reduction of type II -- extending $\Delta$ to $\Delta'$.}\label{r10a} %
 \end{center}
 \end{figure}
%
%

Therefore, we proved that if $d_1'$ and $d_1$ differ by a reduction of type II that corresponds to the bigon formed by arcs 
$p$ and $q$ of $b$ and $d_1$, respectively, then $p$ and $q$ are in the same rectangle of $N_{b\bez a}$, and $q$ is parallel to $p$.
This condition means that $d_1$ and $d_1'$ intersect rectangles in $N_{a\cup b}$ in exactly the same way. Hence, $d_1'\in {\cal C}$ and
$d_1'$ winds around $a$.

Let $d_2$ be the circle obtained from $d_1$ by performing all possible reductions of type II. As we observed, $d_2\in{\cal C}$
and $d_2$ winds around $a$.
\begin{uw}\label{rem:override}
In the following, we use Lemma \ref{lem:d:eq:c} with $d_2$ instead of $d_1$. This approach is somewhat problematic, because some arcs of $d_1$ that satisfy the assumptions of that lemma may admit reductions of type II. To solve this problem, we mimic reductions of type II on the level of $c$. To be more precise, if $q$ is an arc of $c\cap N_{b\bez a}$ which as an arc of $d_1$ admits a reduction of type II, then we push $q$ so that it coincides with the corresponding arc of $d_2$ (Figure \ref{r05b}). 
\begin{figure}[h]
 \begin{center}
\fig{0.97}{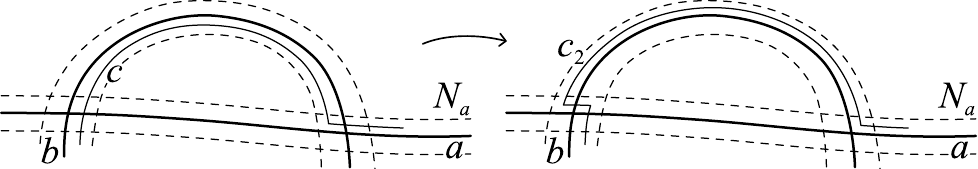}
 \caption{Applying reductions of type II to $c$.}\label{r05b} %
 \end{center}
 \end{figure}
Then we extend this push to the isotopy of $c$. If $c_2$ is a circle obtained from $c$ by all possible reductions of type II (in the sense described above), then $c_2$ still winds around $b$. Hence, it satisfies the assumptions of Lemma \ref{lem:ExtBigon1}. Moreover, we have the following replacement for Lemma \ref{lem:d:eq:c}:
\end{uw}
\begin{lem}\label{lem:d:eq:c2}
Let 
 $q\subset d_2$ be an arc with endpoints $A,B\in \partial N_a$ such that $q\cap b=\emptyset$. Assume also that $q$ intersects both $N_{a\bez b}$ and $N_{b\bez a}$. 
Then $q$ is an arc of $c_2$. \qed 
 \end{lem}
\emph{Reduction of type III.} 
Suppose that there exist arcs $p$ and $q$ of $b$ and $d_2$ respectively such that 
\begin{itemize}
 \item $p$ and $q$ form a bigon with interior disjoint from $b\cup d_2$,
 \item $p\bez N_{a\cap b}$ is a subarc of a one-sided segment of $b$.
\end{itemize}
In such a case we can remove the bigon formed by $p$ and $q$, see Figure~\ref{r06}. 
 \begin{figure}[h]
 \begin{center}
\fig{0.97}{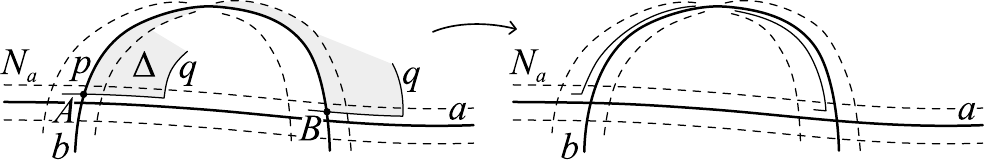}
 \caption{Reduction of type III.}\label{r06} %
 \end{center}
 \end{figure}
We say that we reduced $d_2$ to $d_2'$ by a \emph{reduction of type III}.

Let us attempt to understand reductions of type III in further detail. 

Let $A$ and $B$ be the vertices of the bigon $\Delta$ 
formed by $p$ and $q$, and let $r_A,r_B$ be the rectangles of $N_{a\cap b}$ that contains $A$ and $B$, respectively. By Lemma \ref{lem:ExtBigon:b:side}, the arc $q_A\subset q$ that connects $A$ with the boundary of $N_a$ is either entirely contained in $r_A$ or it passes through one rectangle of $N_{a\bez b}$ and then leave $N_a$ (it can pass through at most one rectangle of $N_{a\bez b}$ because otherwise it would intersect $b$). In other words, the situation illustrated in Figure \ref{r05a} is not possible. The same is true for the arc $q_B\subset q$, which connects $B$ with the boundary of $N_a$. For the same reason, either both arcs $q_A,q_B$ are entirely contained in $r_A,r_B$, or each arc passes through one rectangle of $N_{a\bez b}$.

If $q\bez (q_A\cup q_B)$ intersects $N_a$, then by Lemma \ref{lem:d:eq:c2}, this is an arc of $c_2$ (see Remark \ref{rem:override}). In particular, all components of $(q\bez (q_A\cup q_B))\cap N_a$ are arcs of type D (on arcs of other types, $d_2$ does not coincide with $c_2$).


Observe also that $q$ turns to the right when it enters $N_a$ (because $q$ is one-sided, it enters $N_a$ in the same way on both ends). In fact, $q_A$ and $q_B$ would be arcs of type D otherwise (Remark \ref{uw:arc:turn:left:D}), which are not involved in reductions of type II. Hence, $q_A$ and $q_B$ would be arcs of $c$ (Figure \ref{r06a}).
\begin{figure}[h]
 \begin{center}
\fig{0.48}{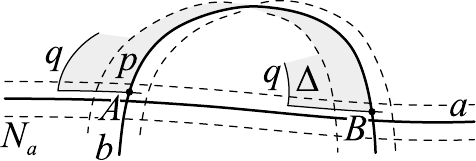}
 \caption{Reduction of type III -- impossible configuration of arcs.}\label{r06a} %
 \end{center}
 \end{figure}
Therefore, 
the whole $q$ would be an arc of $c$, and that would imply that $c$ and $b$ form a bigon.
\begin{uw}\label{uw:Red3:IsInC}
Observe that $d_2'\in{\cal C}$. In fact, if 
$d_2'$ turns back in one of the rectangles $r$ of $N_{a\cup b}$, then $r$ must be a rectangle that contains 
one of the vertices of the bigon, which leads to the reduction (in all other rectangles, we either did not 
change anything, or $d_2'$ runs parallel to $b$ in them). Since $d_2'$ enters $r$ through the $a$-side, after turning back, it must leave $r$ also through the $a$-side. 
\begin{figure}[h]
 \begin{center}
\fig{0.95}{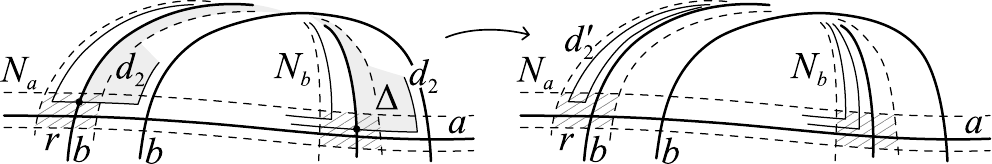}
 \caption{Reduction of type III -- impossible configuration of arcs.}\label{r15a} %
 \end{center}
 \end{figure}
 Hence, we have the situation shown in the right-hand side of Figure \ref{r15a}. The left-hand side of the same figure shows how the situation looked before the reduction.
 
 Let $t$ be an arc of $d_2$ following $q$ past the intersection point $p\cap q\cap r$. Arc $t$ turns right in $r$ and leaves $r$ as an arc parallel to $p$: see Figure \ref{r15b}. 
 \begin{figure}[h]
 \begin{center}
\fig{0.45}{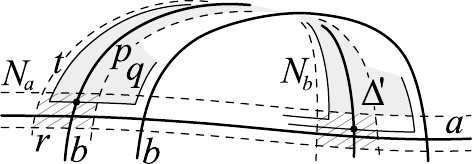}
 \caption{Reduction of type III -- impossible configuration of arcs.}\label{r15b} %
 \end{center}
 \end{figure}
 The same figure shows how the reduction disk $\Delta$ can be deformed to the closed disk $\Delta'$ with boundary composed of $q,t$, and an arc of $\partial N_a$. Note that $q\cap \Delta'$ is an arc of $c_2$ by Lemma \ref{lem:d:eq:c2} and $t$ is an arc of $c_2$ because $d_2$ intersects $r$ as an arc of type D. Therefore the existence of $\Delta'$ contradicts Lemma \ref{lem:ExtBigon1}.
Hence, we proved that $d_2'\in {\cal C}$ (in particular, $d_2'$ does not admit
reductions of type I).
\end{uw}
\begin{uw}\label{uw:Red3:NoNewRed2}
Reduction of type III does not create any new arcs of $d_2'\cap N_a$, which turn to the left 
after entering 
$N_a$. This reduction does not affect the segments of $d_2'$, which run parallel to two-sided segments of $b$. 
Hence, $d_2'$ does not admit reductions of type II.
\end{uw}
Before we go further, we divide possible reductions of type III into 3 subtypes. Let $p$ and $q$ be arcs of $b$ and $d_2$ respectively which define a reduction of type III.
\begin{itemize}
 \item If $p$ and $q$ enter $N_a$ in the same rectangles of $N_{a\cap b}$, then we say that $p$ and $q$ define a \emph{reduction of type IIIa}.
 \item If $p$ and $q$ enter $N_a$ in different rectangles of $N_{a\cap b}$ and $q$ meets a single rectangle of $N_{b\bez a}$, then we say that $p$ and $q$ define a \emph{reduction of type IIIc}.
 \item Otherwise we say that $p$ and $q$ define a \emph{reduction of type IIIb}.
\end{itemize}
 
 \emph{Reduction of type IIIa.}
 Let $d_3$ be the circle obtained from $d_2$ by performing all possible reductions of type IIIa. As we already 
observed, $d_3\in{\cal C}$ and $d_3$ does not admit reductions of types I, II and IIIa (Remarks \ref{uw:Red3:IsInC} and \ref{uw:Red3:NoNewRed2}). 
Moreover, $d_3$ intersects the rectangles of $N_{a\cup b}$ in exactly the same way as $d_2$, hence $d_3$ winds around $a$.
\begin{uw}
We now follow Remark \ref{rem:override} to obtain a version of Lemma~\ref{lem:d:eq:c} for $d_3$. As in the construction of $c_2$, we
mimic the reductions of type IIIa on $c_2$ (Figure \ref{r06c}). However, we do not need to mimic all reductions of type IIIa. 
\begin{figure}[h]
 \begin{center}
\fig{0.95}{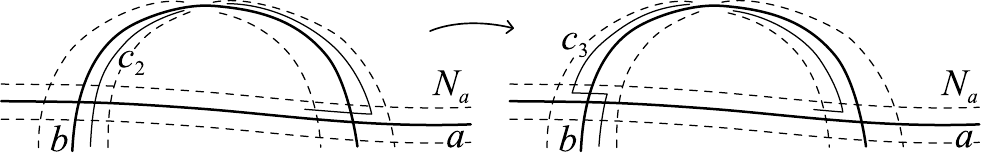}
 \caption{Applying reductions of type IIIa to $c_2$.}\label{r06c} %
 \end{center}
 \end{figure}
To be more precise, we apply to $c_2$ only these reductions of type IIIa, which are determined by an arc $q$ of $c_2$, which enters $N_a$ on one side as an arc of type D ($q$ cannot enter $N_a$ on both sides as an arc of type D: this condition would imply that $c$ and $b$ bound a bigon). If $q$ is an arc of $c$ that  enters $N_a$ on both sides as an arc of types A--C, then the corresponding arc of $d_3$ does not satisfy the assumptions of Lemma \ref{lem:d:eq:c} (on both ends, it intersects $b$). Hence, we do not consider these arcs as they do not mimic the setup of Lemma \ref{lem:d:eq:c}.

Let $c_3$ be the arc obtained from $c_2$ by reductions of type IIIa described above. Since $c_3$ still winds around $b$, it satisfies the assumptions of Lemma \ref{lem:ExtBigon1}.
As a replacement for Lemma \ref{lem:d:eq:c}, we have the following lemma:
\end{uw}
\begin{lem}\label{lem:d:eq:c3}
Let 
 $q\subset d_3$ be an arc with endpoints $A,B\in \partial N_a$ such that $q\cap b=\emptyset$. Assume also that $q$ intersects both $N_{a\bez b}$ and $N_{b\bez a}$. 
Then, $q$ is an arc of $c_3$. \qed 
 \end{lem}
\emph{Reduction of type IIIb.}
Let $d_3'$ be the circle obtained from $d_3$ by a single reduction of type IIIb.
As in the general definition of the reduction of type III, by $A$ and $B$ we denote vertices of the bigon formed by $p$ and $q$, $r_A$ and $r_B$ are rectangles of $N_{a\cap b}$ that contains $A$ and $B$ respectively, and $q_A,q_B$ are arcs of $q$ that connect $A$ and $B$ with the boundary of $N_a$.

\begin{uw}\label{uw:type3b:typeD:arc}
 By definition, $q\bez(q_A\cup q_B)$ is not a single segment of $d_3$ parallel to $b$. Hence, it intersects $N_a$ at least once. By Lemma \ref{lem:d:eq:c3}, $q\bez(q_A\cup q_B)$ is an arc of $c_3$, so it intersects $N_a$ only in arcs of type D (on arcs of other types, $d_3$ does not coincide with $c_3$).
\end{uw}

The existence of a bigon with vertices $A,B$ between $d_3$ and $b$ implies that $q$ and $p$ bound an exterior $n$-gon $\Delta$ (see Section \ref{sec:Rigid} and the right-hand side of Figure \ref{r00d}). 
\begin{figure}[h]
\begin{center}
\fig{0.94}{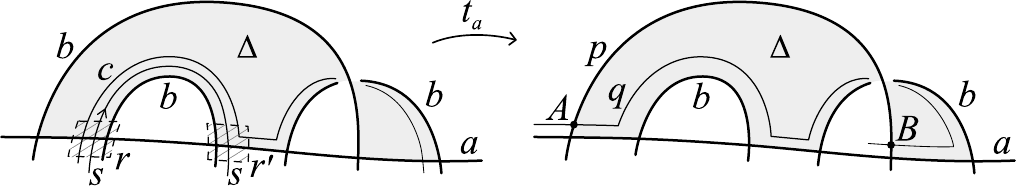}
\caption{Exterior hexagon $\Delta$ and the reduction of intersection points between $b$ and $t_a(c)$.}\label{r00d} %
\end{center}
\end{figure}
Moreover, if the reduction is not of type IIIc, then $n\geq 6$. In such a case, an oriented arc $s$ of $c$ which is of type A in a rectangle $r$ of $N_{a\cap b}$ (see the left-hand side of Figure \ref{r00d}) and  is rigid in $r$ with respect to $b$, cannot yield an arc of $d_3$, which allows a reduction of type IIIb (because if we follow $s$ to the next rectangle $r'$ of $N_{a\cap b}$ then $s$ is of type A in $r'$ and not of type D). 
\begin{uw}\label{uw:Rig:r3b:obst1}
 Our assumption that $c$ winds strongly around $b$ (see Section 6) implies that for each rectangle $r$ of $N_{a\cap b}$, an arc $s$ of $c$ exists, which is of type A in $r$ and such that $s$ yields an arc of $d_3$, which does not allow reductions of type IIIb on either side of $r$: for such $s$, an arc of $c$ that is rigid on both sides of $r$ should be selected.
\end{uw}
\begin{lem}\label{uw:Rig:r3b:obst2}
 If $s'$ is an arc of $c$ of type A in a rectangle $r$ of $N_{a\cap b}$, then $s'$ can yield a reduction of type IIIb only on one side of $r$. 
\end{lem}
\begin{proof}
 Suppose to the contrary that $s'$ yields an arc $s$ of $d_3$ which allows reductions of types IIIb on both sides of $r$ and assume that $r_1,r,r_2$ are rectangles of $N_{a\cap b}$ that are consecutive along $b$ (Figure \ref{r25}). 
 \begin{figure}[h]
\begin{center}
\fig{0.99}{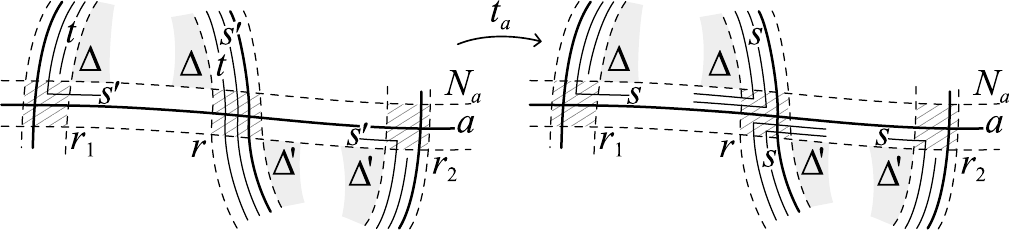}
\caption{Configuration of arcs -- Lemma \ref{uw:Rig:r3b:obst2}.}\label{r25} %
\end{center}
\end{figure}
 By Remark \ref{uw:type3b:typeD:arc}, $s$ coincides with $s'$ in $r_1$ and $r_2$ and enters each of these rectangles as an arc of type D. 
 However, this condition implies that each arc $t$ of $c$ that is of type A in $r$ must enter either $r_1$ or $r_2$ as an arc of type D (because it must follow $s'$ along the boundary of one of the exterior $n$-gons leading to reductions of $s$). This contradicts our assumption that $c$ winds strongly around $b$.
\end{proof}
\begin{lem}\label{lem:ends:ab}
 The arcs of $c\cap N_a$ that correspond to $q_A$ and $q_B$ are arcs of types A or B.
\end{lem}
\begin{proof}
If for example $q_A$ came from an arc of $c$ of type D, then by Lemma \ref{lem:d:eq:c3}, $q\bez q_B$ is an arc of $c_3$ and if we follow this arc past the point $A$, we obtain an arc of $c_3$
which, together with an arc of $\partial N_a$, bounds a disk $\Delta$ that is not contained in $N_a$ [Figure \ref{r06b}(i)]: this contradicts Lemma~\ref{lem:ExtBigon1}. 
\begin{figure}[h]
 \begin{center}
\fig{0.95}{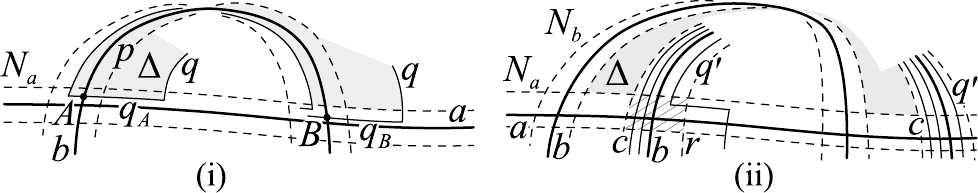}
 \caption{Reduction of type IIIb -- impossible configurations of arcs.}\label{r06b} %
 \end{center}
 \end{figure}

 As for arcs of type C, if for example $q\bez q_B$ was formed from the arc $q'$ of $c$ of type C and $r$ is the rectangle of $N_{a\cap b}$ in which $q_A$ meets $\partial N_a$, then all the arcs of $c$ that are parallel to $b$ in $r$ are on one side of $q'$ in $r$ and they are bounded along the boundary of $\Delta$ by $q'$ [Figure \ref{r06b}(ii)]. Hence, all these arcs must lead to arcs of $d_3$, which allow a reduction of type IIIb across $\Delta$. In particular, if we follow these arcs along the boundary of $\Delta$, then they enter $N_a$ as arcs of type $D$. However, this contradicts our assumption that 
  arcs that are rigid with respect to $b$ exist on both sides of $r$ (because $c$ winds strongly along $b$).

Hence we proved that $q_A$ and $q_B$ were formed from arcs of $c$ which are of types A or B in $r_A$ and $r_B$. 
\end{proof}
Note that the distinction between types B and C of arcs of $c\cap N_a$ is a consequence of our assumption that $t_a$ twists to the right in $N_a$.
\begin{uw}\label{uw:red3b:cont1}
 As a consequence of Lemma \ref{lem:ends:ab}, if $q'$ is an arc of $d_3'$ obtained from $q$ by a reduction of type IIIb, then the arcs of $d_3'\cap N_a$ that follow $q'$ (on each end) are not arcs of type D (because they intersect $a$). Hence, $q'$ is not involved in any further reductions of $d_3'$ of type IIIb (Remark \ref{uw:type3b:typeD:arc}). In other words, no cascade reductions of type IIIb exist: each arc of $d_3\cap N_{b\bez a}$ is involved in at most one such reduction. However, $q'$ may be further reduced by reductions of type IIIc. We postpone this problem to the analysis of reductions of type IIIc.
\end{uw}
\begin{uw}\label{uw:red3b:cont2}
If $q'$ is as in the previous remark (that is, $q'$ is an arc of $d_3'$ obtained from $q$ by a reduction of type IIIb), then $q'$ does not admit a reduction of type IIIa. This conclusion follows because the arcs of $d_3'\cap N_a$ that follow $q'$ (on each end) do not intersect $b$ in rectangles of $N_{a\cap b}$ in which they enter $N_a$ (Figure \ref{r06}). The only arc of $d_3'\cap N_{b\bez a}$ in which $d_3'$ differs from $d_3$ is $q'$. Thus, no new reductions of type IIIa exist on other arcs of $d_3'\cap N_{b\bez a}$.
\end{uw}
Let $d_4$ be the circle obtained from $d_3$ by performing all possible reductions of type IIIb. As we observed in the general analysis of reductions of type III,  $d_4\in{\cal C}$ (Remark \ref{uw:Red3:IsInC}) and $d_4$ does not admit any reductions of types I and II (Remarks \ref{uw:Red3:IsInC} and \ref{uw:Red3:NoNewRed2}). We also proved that $d_4$ does not admit reductions of type IIIa (Remark \ref{uw:red3b:cont2}).
\begin{uw}\label{rem:rec:a}
A segment $q$ of $d_4$ obtained by a reduction of type IIIb can not enter $N_a$ as an arc of type D (Lemma \ref{lem:ends:ab}). We will show later (Lemma~\ref{lem:one:new:inters}) that in such a case arcs that follow/precede $q$ in $N_a$ must intersect $b$. Hence Lemma \ref{lem:d:eq:c3} remains valid with $d_3$ replaced by $d_4$ (arcs modified by a reduction of type IIIb can not satisfy assumptions of that lemma).
\end{uw}
\emph{Reduction of type IIIc.}
Let $d_4'$ be the circle obtained from $d_4$ by a single reduction of type IIIc.
As in the general definition of the reduction of type III, by $A$ and $B$ we denote vertices of the bigon formed by $p$ and $q$, $r_A$ and $r_B$ are rectangles of $N_{a\cap b}$ that contain $A$ and $B$ respectively, and $q_A,q_B$ are arcs of $q$ that connect $A$ and $B$ with the boundary of $N_a$.
\begin{uw}\label{prop3b3c}
Reduction of type IIIc may be considered a special case of a reduction of type IIIb, where the exterior disk is a rectangle. For this reason, these two types of reductions have some common properties:
\begin{itemize}
 \item the arcs of $c$ that correspond to $q_A$ and $q_B$ cannot be of type~D (Lemma \ref{lem:ends:ab});
 \item if an arc $q'$ is an arc of $d_4'$ obtained by a reduction of type IIIc, then $q'$ is not involved in reductions of type IIIa nor IIIb (Remarks \ref{uw:red3b:cont1} and \ref{uw:red3b:cont2});
 \item $d_4'$ does not admit reductions of type IIIa nor IIIb.
\end{itemize}
The proofs of the above properties can be copied verbatim from the analysis of the reduction of type IIIb. Hence, we skip the proofs.
\end{uw}
Let $d_5$ be the circle obtained from $d_4$ by performing all possible reductions of type IIIc. As we observed in the general analysis of reductions of type III, $d_5\in{\cal C}$ (Remark \ref{uw:Red3:IsInC}) and $d_5$ does not admit any reductions of types I and II (Remark \ref{uw:Red3:NoNewRed2}). As we noted above (Remark \ref{prop3b3c}), $d_5$ also does not admit reductions of type IIIa and IIIb. 
\begin{uw}\label{rem:rec:h}
As in Remark \ref{rem:rec:a}, a segment $q$ of $d_5$ obtained by a reduction of type IIIc cannot enter $N_a$ as an arc of type D. We will show later (Lemma \ref{lem:one:new:inters}) that in such a case, arcs that follow/precede $q$ in $N_a$ must intersect $b$. Hence Lemma \ref{lem:d:eq:c3} remains valid with $d_3$ replaced by $d_5$ (arcs modified by a reduction of type IIIc can not satisfy assumptions of that lemma).
\end{uw}
\begin{uw} \label{Uw:Red:join:cascade}
An arc $s$ of $d_4\cap N_a$ can be involved in multiple reductions of type IIIc (Figure \ref{r17}), that is, the arcs that start at the end-points of $s$ can be involved in several reductions of type IIIc.
\begin{figure}[h]
 \begin{center}
\fig{0.95}{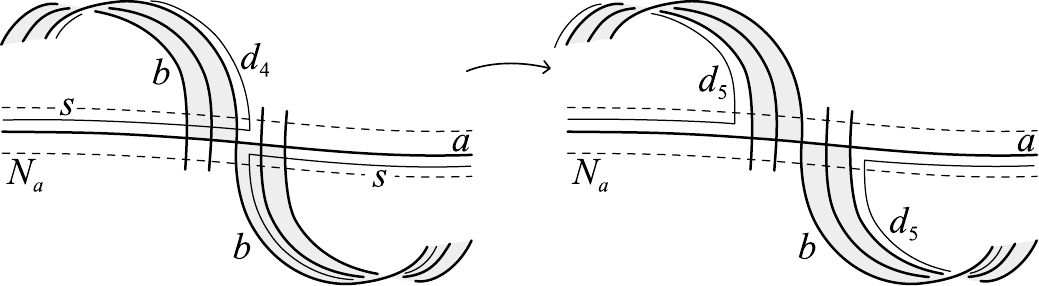}
 \caption{Cascade reductions of type IIIc.}\label{r17} %
 \end{center}
 \end{figure}
However, all these reductions can change the initial and the terminal rectangles $r_1,r_2$ of $s$ only to the rectangles joinable to $r_1$ and $r_2$, respectively (see Section \ref{sec:joinable}).
\end{uw}

\begin{lem}\label{lem:one:new:inters}
Let $s'$ be an arc of $c\cap N_a$ of type A, B or C, and let $s$ be the arc of $d_5\cap N_a$ which corresponds to $s'$. Then $s$ is parallel to $a$ in at least one rectangle of $N_{a\cap b}$.
\end{lem}
\begin{proof}
Let $s''$ be the arc of $d_3$ that corresponds to $s'$ (that is, $s''$ is obtained from $t_a^k(s')$ by reductions of types I, II, IIIa), and let $p$ and $q$ be segments of $b$ that correspond to the segments of $d_5$ that start at the endpoints of $s$.

Assume first that $s'$ is an arc of type C. Arcs of this type do not allow reductions of type IIIb (Lemma \ref{lem:ends:ab}). Hence, the only type of reductions that could decrease the number of rectangles in which $s''$ is parallel to $a$ is the reduction of type IIIc. If we assume that no rectangles of $N_{a\cap b}$ exist in which $s$ is parallel to $a$, then each double segment of $b$ must contain a segment joinable either to $p$ or $q$ (Remark \ref{Uw:Red:join:cascade}). However, this implies that $a$ and $b$ are in the special position (S1), which contradicts our assumption that $\{a,b\}$ is not special. 

If $s'$ is an arc of type A or B, then the situation is completely analogous. If $s'$ is of type A, then $s''$ can admit on one side one reduction of type IIIb (Lemma \ref{uw:Rig:r3b:obst2}), and if $s'$ is of type B, then $s''$ can admit reductions of type IIIb on both sides (one on each side -- Remark \ref{uw:red3b:cont1}). Hence, the assumption that no rectangle of $N_{a\cap b}$ exists in which $s$ is parallel to $a$ implies that $a$ and $b$ are in the special position (S1).
\end{proof}

\emph{Strong winding of $d_5$.}
We need to show that for each three rectangles $r_1,r,r_2$ of $N_{a\cap b}$, which are consecutive along $a$, an arc of $d_5$ exists, which is parallel to $a$ in $r_1,r,r_2$.
Without loss of generality, we can assume that the configuration of rectangles is as in Figure \ref{r07a}, that is, $r_2$ is on the right of $r$.
\begin{figure}[h]
 \begin{center}
\fig{0.52}{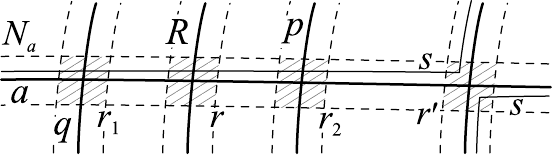}
 \caption{Intersection of $t_a^k(s')$ with rectangles of $N_{a\cap b}$.}\label{r07a} %
 \end{center}
 \end{figure}
Let $p$ and $q$ be segments of $b$ such that $p$ goes up from $r_2$ and $q$ goes down from $r_1$, and let $R$ be the double segment of $b$ that corresponds to $r$.

Let $r'$ be any rectangle of $N_{a\cap b}$ that is different from $r_1,r$ and $r_2$ (such rectangles exist given that $I(a,b)\geq  4$), and let $s'$ be an arc of $c\cap N_{a\cap b}$ that is rigid on both sides of $r'$ (it exists because $c$ winds strongly around $b$). If $s$ is an arc of $d_3$ that corresponds to $s'$, then $s$ does not allow reductions of type IIIb on either side of $r'$. Hence, it can only be reduced by reductions of type IIIc. By Remark \ref{Uw:Red:join:cascade}, if we assume that $s$ can be reduced so that the corresponding arc of $d_5$ is not parallel to $a$ in $r_1$ or $r_2$, then one of the segments of $b$ that start in $r'$ must be joinable to either $p$ or $q$. 

From the above analysis, if we assume that each arc $s'$ of $c\cap N_{a\cap b}$ leads to an arc $s$ of $d_5\cap N_a$ that is not parallel to $a$ in either $r_1$ or $r_2$, then each double segment of $b$ different from $R$ is joinable either to $p$ or $q$. Moreover, the triple $\{p,q,R\}$ is positively oriented (see Section \ref{sec:Rigid}). Hence, we are in the special position (S2) or (S3), which is a contradiction.

\emph{Bigons formed by $d_5$ and $b$.}
Let us prove that $d_5$ and $b$ are in the minimal position so they do not form any bigon. Suppose on the 
contrary that $\Delta$ is a bigon with vertices $A$ and $B$ bounded by arcs $p$ and $q$ of $d_5$ and $b$, 
respectively. By taking the inner most bigon, we can assume that the interior of $\Delta$ is disjoint from 
$d_5\cup b$. 

Since $d_5$ winds around $a$, each rectangle of $N_{a\cap b}$ contains an intersection point 
of $d_5$ and $b$. Hence, $q$ is either an arc of $b$ in a single rectangle $r_{A,B}$ in $N_{a\cap b}$,
or $q$ is a segment of $b$ that connects two different rectangles $r_A$ and $r_B$ of $N_{a\cap b}$.
In the second case, $d_5$ would admit a reduction of type II or III (depending on whether $q$ is 
two-sided or not). Hence, we concentrate on the first possibility. If $\Delta$ is contained in $N_a$, then 
$d_5$ admits a reduction of type I, which is not possible. Hence $\Delta$ is not contained in $N_a$.

Let $p'$ be the subarc of $p$, which is obtained from $p$ by removing the arcs contained in $N_a$ that connect $A$ and $B$ with the boundary of $N_a$ (Figure \ref{r16a}). 
\begin{figure}[h]
 \begin{center}
\fig{0.76}{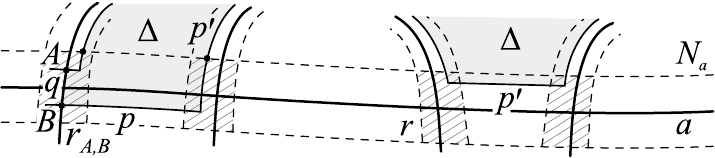}
 \caption{Bigon $\Delta$ between $b$ and $d_5$.}\label{r16a} %
 \end{center}
 \end{figure}
By Lemma \ref{lem:d:eq:c3} and Remark \ref{rem:rec:h}, the arc $p'$ of $d_5$ is in fact an arc of $c_3$. Hence, the existence of $\Delta$ contradicts Lemma \ref{lem:ExtBigon1}
 

\emph{Counting intersection points between $d_5$ and $b$.} 
To finish the proof, we need to show that $I(d_5,b)>I(d_5,a)$. The idea is to show that associated  
intersection points of $d_5\cap b$ exist for each intersection point of $d_5\cap a$.

These arcs of $d\cap N_a$ that intersect $a$ have a one-to-one correspondence to arcs of $c\cap N_a$ 
that intersect $a$, hence to arcs of types A--C.
Moreover, all reductions we performed on $d$ preserved this bijection because during the reductions 
we did not create any new arcs that intersect $a$, and we did not remove any of the existing ones. 

By Lemma \ref{lem:one:new:inters}, each arc $s$ of $d_5\cap N_a$ that corresponds to an arc $s'$ of $c\cap N_a$ of type A--C is parallel to $a$ in at least one rectangle of $N_{a\cap b}$. Hence, it intersects $b$ at least once. Moreover, because $d_5$ winds strongly around $a$, some arcs $s$ of $d_5\cap N_a$ intersect $b$ in at least 3 points. Therefore $I(d_5,b)>I(d_5,a)$. 
\end{proof}
 \section{Weak rigidity}\label{sec:weak}%
The proof of Proposition \ref{Prop:main:g3} is based on the notion of strong winding: for each three rectangles $r_1,r_2,r_3$ that are consecutive on $a$, an arc of $c\cap N_a$ exists, which leads to an arc of $d_5\cap N_a$ parallel to $a$ in $r_1,r_2$, and $r_3$.
This assertion can fail in special cases (S1)--(S3).
\begin{ex}\label{ex:spec:1}
 Let $a$ and $b$ be two circles that are in the special position (S1), as shown in Figure \ref{r21a}(i). 
  \begin{figure}[h]
\begin{center}
\fig{0.99}{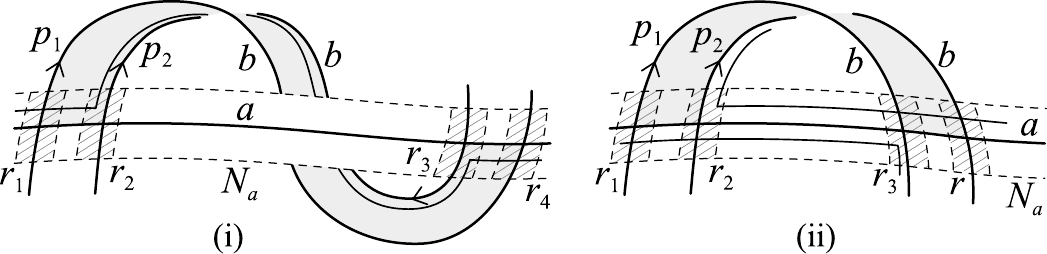}
\caption{Possible failure in the proof of Proposition \ref{Prop:main:g3} -- Examples \ref{ex:spec:1} and \ref{ex:spec:2}.}\label{r21a} %
\end{center}
\end{figure}
 Segments $p_1$ and $p_2$ are adjacent, thus the arcs of $d$, which are obtained from arcs of $c$ of type A in the rectangle $r_2$, may admit a reduction of type IIIc. Hence, none of these arcs may be parallel to $a$ in $r_1$. Similarly, all arcs of $d$ that are obtained from arcs of $c$ of type A in $r_3$ may reduce by reductions of type IIIc. Hence, none of these arcs may be parallel to $a$ in $r_4$. Therefore, a possibility that no arc of $d_5\cap N_a$ which is parallel to $a$ in $r_1$ and $r_4$ exists (hence, $d_5$ does not wind strongly around $a$).
\end{ex}
\begin{ex}\label{ex:spec:2}
 Let $a$ and $b$ be two circles that are in the special position (S3) as shown in Figure \ref{r21a}(ii). Segments $p_1$ and $p_2$ are adjacent. Thus, the arcs of $c\cap N_a$ which are of type A in $r_1$ or $r_2$ give arcs of $d$ that can be reduced so that the resulting arcs of $d_5\cap N_a$ are not parallel to $a$ in $r_1$. Similarly, arcs of $c\cap N_a$ that are of type A in $r_3$ or $r$ yield arcs of $d_5\cap N_a$ that are not parallel to $a$ in $r_3$. Therefore, a possibility is that no arc of $d_5\cap N_a$ that is parallel to $a$ in $r_1$ and $r_3$ exists (hence, $d_5$ does not wind strongly around $a$).
%
%
\end{ex}
To overcome the abovementioned problems, we redefine the rigidity of arcs slightly. 

Let $r_1,r_2,r_3,r_4,r_5,r_6$ be rectangles of $N_{a\cap b}$ that are consecutive vertices of an exterior hexagon $\Delta$ (consecutive here means consecutive along $\partial \Delta$). Suppose also that $q$ is an arc of a circle $c\in {\cal C}$ as in Figure \ref{r00c}(i), that is, $q$ is of type A in $r_1$ and $r_4$, $q$ is of type D between $r_2$ and $r_3$, and the segment of $b$ that connects $r_5$ and $r_6$ is one-sided. In such a case we say that $q$ is a \emph{one-sided boundary 3-segment} of $\Delta$.
\begin{uw}\label{rem:concordance}
 Let $\Delta$ be an exterior hexagon 
 and let $q$ be an arc of $c$, which leads to an arc $q'$ of $d_3$ and allows a reduction of type IIIb across $\Delta$ [Figure \ref{r00c}(i)]. According to the definition of the reduction of type IIIb, if $q$ enters $N_a$ as an arc of type A (on both ends), then $q$ is a one-sided boundary 3-segment of $\Delta$.
\end{uw}
Let $q$ be an arc of $c\in{\cal C}$ that is parallel to $b$ in a rectangle $r_1$ of $N_{a\cap b}$. Some orientation of $q$ is fixed, and $q$ is followed to the rectangles $r_2,r_3,r_4$ of $N_{a\cap b}$ following $r_1$. We say that $q$ is \emph{weakly rigid} in $r_1$ with respect to $b$ if either 
\begin{itemize}
 \item $q$ is rigid with respect to $b$ in $r_1$, or
 \item $q$ does not intersect $a$ in $r_2$ and $r_3$, $q$ is parallel to $b$ in $r_4$, and $q$ is not an one-sided boundary 3-segment of an exterior hexagon (Figure \ref{r15d}).
\end{itemize}
\begin{figure}[h]
\begin{center}
\fig{0.83}{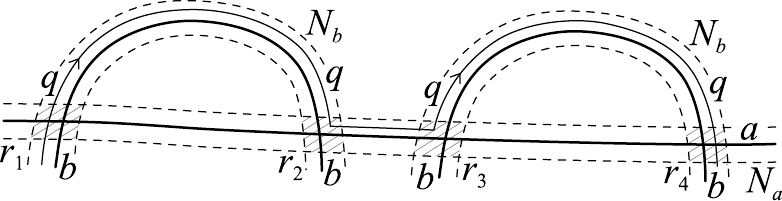}
\caption{Arc $q$ of $c$ which is weakly rigid in a rectangle $r_1$ of $N_{a\cap b}$.}\label{r15d} %
\end{center}
\end{figure}
If $c$ winds around $b$, then equivalently (from the perspective of $q$ intersecting $N_a$), $q$ is of type A in $r_1$ and then either $q$ is of type A in $r_2$, or $q$ is of type D between $r_2$ and $r_3$, $q$ is of type A in $r_4$, and $q$ is not a one-sided boundary 3-segment of an exterior hexagon.

The following lemma shows that from the point of view of the proof of Proposition \ref{Prop:main:g3}, weakly rigid arcs are as good as rigid arcs.
\begin{lem}\label{s3:lem1}
 Suppose that $c\in{\cal C}$ winds around $b$ and let $q$ be an oriented arc of $c$ which is weakly rigid in a rectangle $r_1$ of $N_{a\cap b}$. Then $t_a^k(q)$ does not admit a reduction of type IIIb.
\end{lem}
\begin{proof}
 Suppose that $q$ after leaving $r_1$ goes around the boundary of an exterior $n$-gon $\Delta$ and then enters $N_a$ in a rectangle $r_4$ of $N_{a\cap b}$ as an arc of type A (Figure \ref{r00c}(i)).
 \begin{figure}[h]
\begin{center}
\fig{0.99}{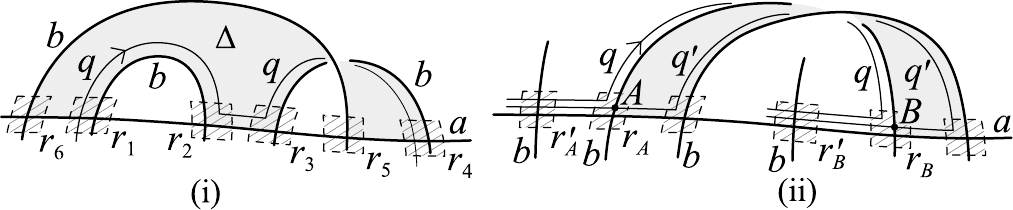}
\caption{Reductions of types IIIb and IIIc.}\label{r00c} %
\end{center}
\end{figure}
 If $q$ leads to an arc that admits a reduction of type IIIb, then $q$ between $r_1$ and $r_4$ must follow $\frac{n}{2}-2$ rectangles of $N_{a\bez b}$. Hence, it must $\frac{n}{2}-2$ times enter $N_a$ as an arc of type D. However, by the definition of weak rigidity, $q$ can intersect $N_a$ as an arc of type D only once. Hence, $n=6$ and by Remark \ref{rem:concordance}, $q$ is a one-sided boundary 3-segment of $\Delta$. However, this contradicts the assumption that $q$ is weakly rigid in $r_1$.
\end{proof}
The next lemma shows that the reductions of types IIIb and IIIc produce arcs that are good candidates for weakly rigid arcs. 

Let $c,d_3,d_5$  be as in the proof of Proposition \ref{Prop:main:g3}, that is, $c\in {\cal C}$ winds around $b$, $d_3$ is obtained from $t_a^k(c)$ by all possible reductions of types I--IIIa, and $d_5$ is obtained from $d_3$ by all possible reductions of types IIIb and IIIc. Assume also that $c$ is such that the statements of Lemma \ref{lem:ends:ab} and Remark \ref{prop3b3c} holds true.
\begin{lem}\label{lem:spec:same:side:prod:weak}
 Suppose that each arc of $c\cap N_a$ of types A--C gives an arc of $d_5\cap N_a$ that is parallel to $a$ in at least one rectangle of $N_{a\cap b}$. Let $q$ be an oriented arc of $d_5$ which starts in a rectangle $t$ of $N_{a\cap b}$ as an arc parallel to $a$, then it follows $a$ to the next rectangle of $N_{a\cap b}$ and then it follows an arc obtained from an arc of $d_3\bez N_a$ by reductions of types IIIb and/or IIIc [Figure \ref{r00c}(ii)]. If $q$ is not a one-sided boundary 3-segment of an exterior hexagon, then $q$ is weakly rigid in $t$ with respect to $a$. 
\end{lem}
\begin{proof}
 Let $q'$ be an arc of $d_3$, which can be reduced to the arc $q$ of $d_5$ by reductions of types IIIb and/or IIIc. Assume that the endpoints of $q'$ are $A,B\in d_3\cap b$ and let $r_A,r_B$ be rectangles of $N_{a\cap b}$ that contain $A$ and $B$, respectively. By the general properties of reductions of type III (see Lemma \ref{lem:ends:ab} and Remark \ref{prop3b3c}), both ends of $q'$ were formed from the arcs of $c$ intersecting $a$ (hence not arcs of type D). Therefore, by our assumption, both ends of $q$ after entering $N_a$ must run parallel to $a$ in at least one rectangle of $N_{a\cap b}$. Orient $q$ from $r_A$ to $r_B$ and extend $q$ on both ends so that it starts and terminates in rectangles $r_A'$ and $r_B'$ of $N_{a\cap b}$, which respectively precede and follow $r_A$ and $r_B$ along $q$.
 From the point of view of $q$ intersecting $N_b$, this arc is of type A in $r_A'$, then it intersects $N_b$ as an arc of type D, and then it is again an arc of type A (in $r_B'$). This finding means that if this arc is not a one-sided boundary 3-segment of an exterior hexagon, then $q$ is weakly rigid in $r_A'$. 
\end{proof}

 We say that $c\in{\cal C}$ is \emph{weakly rigid} with respect to $b$ if each arc $p$ of $c$ that is parallel to $b$ in a rectangle $r$ of $N_{a\cap b}$ is weakly rigid with some choice of orientation.
 \begin{uw}\label{Rem:weak:subs3}
  If $c$ is weakly rigid with respect to $b$, then by Lemma \ref{s3:lem1}, $c$ satisfies the statement of Lemma \ref{uw:Rig:r3b:obst2}. Hence, the assumption that $c$ is weakly rigid with respect to $b$ serves as a replacement for Lemma \ref{uw:Rig:r3b:obst2}.
 \end{uw}
 We say that $c\in{\cal C}$ \emph{winds weakly} around $b$ if for every rectangle $r$ of $N_{a\cap b}$ and each choice of orientation for the collection $P$ of arcs of $c$ that are parallel to $b$ in $r$ a weakly rigid arc exists in $P$.
\begin{uw}\label{Rem:weak:subs}
 In the proof of Proposition \ref{Prop:main:g3}, the assumption that $c$ winds strongly around $b$ was used to conclude that in each rectangle $r$ of $N_{a\cap b}$ an arc $s$ of $c$ exists, which is of type A in $r$ and which leads to an arc of $d_3\cap N_a$ that does not allow a reduction of type IIIb on either side of $r$ (Remark \ref{uw:Rig:r3b:obst1}). By Lemma \ref{s3:lem1}, weak winding provides a slightly weaker conclusion: for each $a$-side of $r$, an arc $s$ of $c$ exists, which is of type A in $r$ and which yields an arc of $d_3$ that does not allow a reduction of type IIIb on the chosen side of $r$ (for such $s$, choose an arc of $c$ that is weakly rigid on the chosen side of $r$).
\end{uw}
\begin{uw}\label{Rem:weak:subs2}
 Remark \ref{Rem:weak:subs} implies that the proof of Lemma \ref{lem:ends:ab} remains valid if we replace the notion of strong winding with the notion of weak winding. In fact, that proof was based on the fact that if $r$ is a rectangle $r$ of $N_{a\cap b}$, then for each $a$-side of $r$, an arc $s$ of $c$ exists, which is of type A in $r$ and which yields an arc of $d_3$ that does not allow a reduction of type IIIb on the chosen side of $r$. 
\end{uw}
Let $\widehat{X}_a$ be the set of isotopy classes of circles $c$ in $N$, which satisfy the following conditions
\begin{enumerate}
 \item $c\in {\cal C}$,
 \item $I(c,a)=|c\cap a|$, $I(c,b)=|c\cap b|$,
 \item $I(c,a)<I(c,b)$,
 \item $c$ is weakly rigid with respect to $a$,
 \item $c$ winds weakly around $a$.
\end{enumerate}
Similarly, we define $\widehat{X}_b$ by requiring (1)--(2) above and additionally
\begin{enumerate}
 \item[(3')] $I(c,b)<I(c,a)$,
 \item[(4')] $c$ is weakly rigid with respect to $b$,
 \item[(5')] $c$ winds weakly around $b$.
\end{enumerate}
\begin{lem}\label{Lem:two:NewVer}
 Let $c\in \widehat{X}_b$ and assume that $a$ and $b$ are not in the special position (S1), (S2) nor (S3). Let $d_5$ be as in the proof of Proposition \ref{Prop:main:g3} and let $s'$ be an arc of $c\cap N_a$ of type A, B or C. If $s$ is the arc of $d_5\cap N_a$ which corresponds to $s'$, then $s$ is parallel to $a$ in at least two rectangles of $N_{a\cap b}$.
\end{lem}
\begin{proof}
Let $s''$ be the arc of $d_3$ that corresponds to $s'$ (that is $s''$ is obtained from $t_a^k(s')$ by reductions of types I, II, IIIa), and let $p$ and $q$ be segments of $b$ that correspond to the segments of $d_5$ that start at the endpoints of $s$.

Assume first that $s'$ is an arc of type C. Arcs of this type do not allow reductions of type IIIb (Lemma \ref{lem:ends:ab} and Remark \ref{Rem:weak:subs2}). Hence, the only type of reduction that could decrease the number of rectangles in which $s''$ is parallel to $a$ is the reduction of type IIIc. If we assume that no rectangles of $N_{a\cap b}$ exist in which $s$ is parallel to $a$, then each double segment of $b$ must contain a segment joinable either to $p$ or $q$ (Remark \ref{Uw:Red:join:cascade}). But this implies that $a$ and $b$ are in the special position (S1), which contradicts our assumption. 
Analogously, if we assume that $s$ is parallel to $a$ in only one rectangle $r$ of $N_{a\cap b}$ and $R$ is the double segment of $b$ that corresponds to $r$, then each double segment of $b$ that is different from $R$ contains an oriented segment joinable to $p$ or $q$. Moreover, the triple $\{p,q,R\}$ must be positively oriented. Hence, 
$a$ and $b$ are in the special position (S2) or (S3), which again is a contradiction.

If $s'$ is an arc of type A or B, then the situation is completely analogous. If $s'$ is of type A, then $s''$ can admit on one side one reduction of type IIIb (Lemma \ref{uw:Rig:r3b:obst2},  Remarks \ref{Rem:weak:subs3} and \ref{uw:red3b:cont1}), and if $s'$ is of type B, then $s''$ can admit reductions of type IIIb on both sides (one on each side -- Remark \ref{uw:red3b:cont1}). Hence, the assumption that no rectangle of $N_{a\cap b}$ exists in which $s$ is parallel to $a$ implies that $a$ and $b$ are in the special position (S1), and if we assume that $s$ is parallel to $a$ in only one rectangle of $N_{a\cap b}$, then $a$ and $b$ are in the special position (S2) or (S3).
\end{proof}
\begin{lem}\label{Lem:WeakWin:NewVer}
 Let $c\in \widehat{X}_b$ and assume that $a$ and $b$ are not in the special position (S1), (S2) nor (S3). If $d_5$ is as in the proof of Proposition \ref{Prop:main:g3}, then $d_5$ winds weakly around $a$.
\end{lem}
\begin{proof}
 We need to show that for each rectangle $r$ of $N_{a\cap b}$ and for each orientation of arcs parallel to $a$ in $r$, an arc of $d_5$ exists, which is parallel to $a$ in $r$ and is weakly rigid with respect to the chosen orientation of arcs in $r$. 

Fix $r$ and let $r_1$ be the rectangle of $N_{a\cap b}$ following $r$ along $a$ with respect to the chosen orientation of arcs in $r$. Without loss of generality we can assume that the configuration of rectangles is as in Figure \ref{r07d}, that is $r_1$ is on the right of $r$. 
\begin{figure}[h]
 \begin{center}
\fig{0.76}{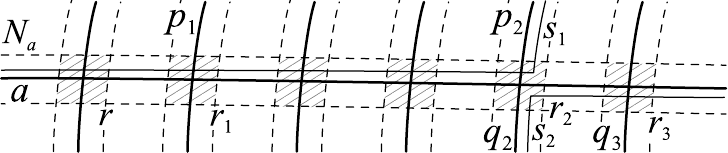}
 \caption{Intersection of $t_a^k(s')$ with rectangles of $N_{a\cap b}$.}\label{r07d} %
 \end{center}
 \end{figure}
Next, follow $a$, with the chosen orientation of arcs in $r$, to the first rectangle $r_2$ of $N_{a\cap b}$  such that the top oriented segments of $b$ in $r_1$ and $r_2$ are not joinable
($r_2$ exists by Proposition \ref{prop:sing:dbl:seg}). If $r_2=r$, then each double segment of $b$ contains a segment joinable to the top segment of $r_1$ or to the bottom segment of $r$. Hence, $a$ and $b$ are in the special position (S1). Therefore, $r_2\neq r$.

Let $r_3$ be the rectangle of $N_{a\cap b}$ that follows $r_2$. If $r_3=r$ then $a$ and $b$ are in a special position [if the top segment of $r_1$ is joinable to the top segment of $r$ or if the bottom segment of $r_2$ is joinable with the bottom segment of $r$, then we are in the special position (S1); otherwise, we are in the special position (S2) or (S3)]. Hence, $r_3\neq r$. 

Let $s'$ be an arc of $c\cap N_a$ that is parallel to $b$ in $r_2$ and that is weakly rigid on the top side of $r_2$ (it exists because $c$ winds weakly around $b$). Under this assumption the segment $s_1$ of $t_a^k(c)$ that starts at the top endpoint of $s'$ does not allow a reduction of type IIIb (Lemma \ref{s3:lem1}). It may allow reductions of type IIIc, but they cannot reach $r_1$, because $r_2$ is not joinable to $r_1$. 

Now, we concentrate on the segment $s_2$ of $t_a^k(c)$ starting at the bottom endpoint of $s'$. Let $p_i$ and $q_i$, for $i=1,2,3$, be segments of $b$, which go up and down from $r_i$, respectively.
Segment $s_2$ may admit a reduction of type IIIb and some reductions of type IIIc, but if these reductions reach $r$, then either all double segments of $b$ contain a segment joinable to $p_1$ or $q_2$ (this case happens when $s_2$ can be reduced by reductions of type IIIc), or only one double segment of $b$  (corresponding to $r_2$) exists, which does not contain a segment joinable to $p_1$ or $q_3$ (this happens when $s_2$ is reduced by a reduction of type IIIb and then by reductions of type IIIc). According to our assumption that $a$ and $b$ are not special, such a situation is not possible. Hence, the reductions on $s_2$ cannot reach $r$, and as a consequence, the arc $s$ of $d_5$, that corresponds to $s'$ is parallel to $a$ in $r$ and $r_1$.
\end{proof}
 \section{The special cases (S1) and (S2)}\label{sec:specA}
The common feature of cases (S1) and (S2) is the existence of oriented segments $p$ and $q$ starting on different sides of $a$, such that each double segment (or each double segment except one in case (S2)) contains a segment joinable to $p$ or $q$. In case (S1) this implies the possibility that some arcs of $c\cap N_a$ of types A--C may lead to arcs of $d_5\cap N_a$, which does not intersect $b$ (see the proof of Lemma \ref{lem:one:new:inters}). We will show below (Lemma \ref{Lem:s13:inner}) that such reductions are not possible.

The second problem in cases (S1) and (S2) is that in these cases, $d_5$ may not wind strongly around $a$. We deal with this problem by replacing the notion of strong winding with the notion of weak winding defined in the previous section.

%
%
%
%
%
%
%
%
%
\begin{lem}\label{MobRest}
 Let $\map{\pi}{M}{S^1}$ be a bundle over $S^1$ with fiber $I=[0,1]$ and which is homeomorphic to a \Mob. If a simple oriented arc $c$ in $M$ is monotone with respect to the fixed orientation of $S^1$ and has endpoints in $\partial M$, then $c$ intersects every fiber in at most two points.
\end{lem}
\begin{proof}
 If $c$ intersects some fiber in at least 3 points, then $c$ winds infinitely many times around the core of $M$ [Figure \ref{r15}(i)].
  \begin{figure}[h]
 \begin{center}
\fig{0.7}{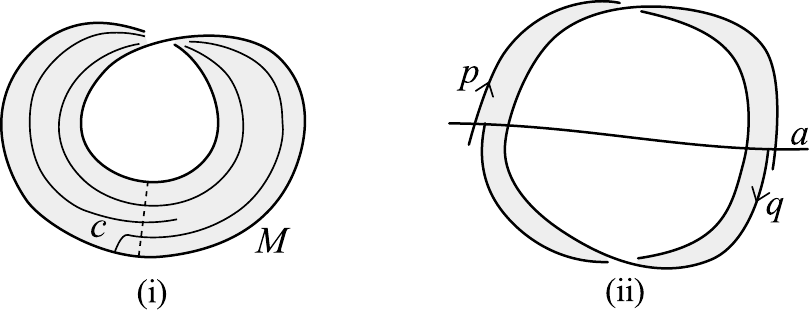}
 \caption{Impossible configurations of arcs -- Lemmas \ref{MobRest} and \ref{lem:spec:diff:sides}.}\label{r15} %
 \end{center}
 \end{figure}
\end{proof}
\begin{lem}\label{lem:spec:diff:sides}
 Let $a$ and $b$ be two generic two-sided circles in $N$ such that $I(a,b)\geq 4$, and assume that 
 oriented segments $p$ and $q$ of $b$ exist, which start on different sides of $a$, such that each
 double segment of $b$ contains an oriented segment joinable to $p$ or $q$ (that is $a$ and $b$ are in the special position (S1)).  
 Then $p$ is joinable to $-q$.
\end{lem}
\begin{proof}
 Suppose first that $p$ starts and terminates on the same side of $a$. In such a case, by Proposition 
 \ref{prop:prop:segments}, segments that start at the 
 terminal points of segments joinable to $p$ must be joinable to $q$, and vice versa.
 However, this implies that $b$ is a circle on 
 an annulus [that is the union of twisted rectangles given by adjacency: see Figure \ref{r15}(ii)].
 This contradicts the assumption that $I(a,b)\geq 4$.
 
 Hence, $p$ starts and terminates on two different sides of $a$. But then, the segment starting at the terminal point
 of $p$ cannot be joinable to $q$ (because $p$ and $q$ begin on two different sides of $a$), and it cannot
 be joinable to $p$ (by Proposition \ref{prop:prop:segments}). Hence, $-p$ must be joinable to $q$.
\end{proof}
\begin{uw}\label{rem:S1}
 Let $a,b,p$ be as in the above lemma and let $p_1,p_2,\ldots,p_n$ be all oriented segments of $b$ joinable to $p$. The union of adjacency disks between $p_1,p_2,\ldots,p_n$ provides a rectangle $\Gamma$. We can assume that $p$ goes up from $a$ and that $p_1$ and $p_n$ are on the boundary of $\Gamma$, where $p_n$ leaves $N_a$ to the left of $p_1$. By Lemma \ref{lem:spec:diff:sides}, $q$ is equal to one of the segments: $-p_1,-p_2,\ldots,-p_n$. Hence, without loss of generality, we can assume that $q=-p_1$. The other two boundary arcs of $\Gamma$ are arcs $a_1,a_2$ of $a$. Observe that by Proposition \ref{prop:prop:segments}, $a_1\cap a_2=\emptyset$, hence $a\bez(a_1\cup a_2)$ consists of two arcs and the configuration of $a$ and $b$ is as in the left part of Figure \ref{r21}. 
  \begin{figure}[h]
 \begin{center}
\fig{0.68}{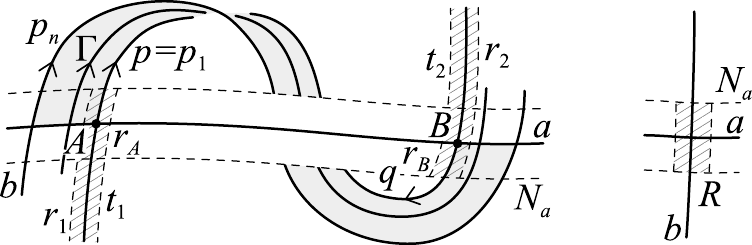}
 \caption{Configuration of arcs in cases (S1) and (S2).}\label{r21} %
 \end{center}
 \end{figure}
\end{uw}
\begin{lem}\label{lem:spec:same:sides}
Let $a$ and $b$ be two generic two-sided circles in $N$ such that $I(a,b)\geq 4$, and assume that 
 oriented segments $p,q$ of $b$ and a double segment $R$ of $b$ exist, such that $a$ and $b$ are not in the special position (S1), $p$ and $q$ start on different sides of $a$, $p$ starts and terminates on different sides of $a$, each double segment of $b$  different from $R$ contains an oriented segment joinable to $p$ or $q$, $\{p,q,R\}$ is positively oriented [that is $a$ and $b$ are in the special position (S2)]. Then $p$ is joinable to $-q$.
\end{lem}
\begin{proof}
Our first claim is that an oriented segment of $b$ exists, which is joinable to $p$ and such that it neither starts nor terminates in $R$ (that is neither $p$ nor $-p$ is a segment of $R$). Given that $I(a,b)\geq 4$, at least 3 intersection points of $a\cap b$ different from $R$ exist. Hence, at least two arcs $s$ and $t$ are joinable to $p$ or joinable to $q$, which do not start in $R$. If both these arcs are joinable to $p$, then at least one of them cannot terminate in $R$ (given that they terminate on the same side of $a$). Hence, our claim follows. Now assume that these two arcs are joinable to $q$. If $q$ starts and terminates on different sides of $a$, then the roles of $p$ and $q$ are symmetric, and we can prove our claim by relabeling $p$ to $q$, and vice versa. Hence, assume that $q$ starts and terminates on the same side of $a$. Let $s'$ and $t'$ be oriented segments of $b$ that follow $s$ and $t$, respectively [Figure \ref{r22}(i)].
\begin{figure}[h]
 \begin{center}
\fig{0.89}{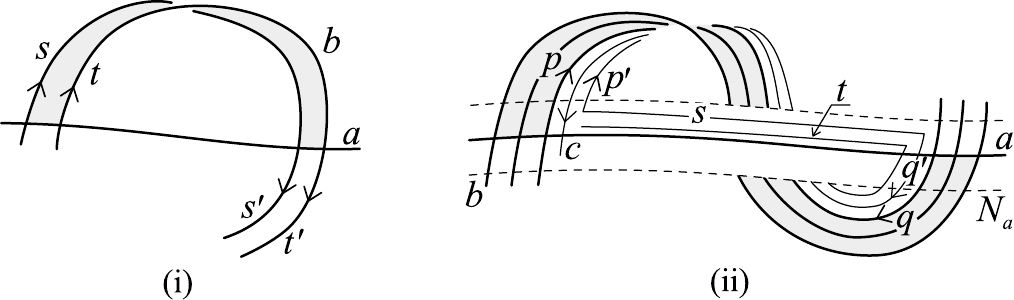}
 \caption{Configuration of arcs, Lemmas \ref{lem:spec:same:sides} and \ref{LemSpecS1c}.}\label{r22} %
 \end{center}
 \end{figure}
By Proposition \ref{prop:prop:segments}, none of $-s,-t,s',t'$ is joinable to $q$ and by our assumptions $-s$ and $-t$ are not joinable to $p$ (because $p$ and $q$ start on different sides of $a$). Hence, at least one of $s',t'$ is joinable to $p$. If both $s'$ and $t'$ are joinable to $p$, then our claim follows because only one of them can terminate in $R$. If only one of them, say $s'$, is joinable to $p$, then $R$ corresponds to the terminal point of $t$. Hence, $s'$ cannot terminate in $R$, which again proves our claim.

Assume that $s$ is a segment of $b$ joinable to $p$, which does not start nor terminate in $R$.
 Then, the segment that starts at the terminal point
 of $p$ cannot be joinable to $q$ (because $p$ and $q$ begin on two different sides of $a$), and it cannot
 be joinable to $p$ (by Proposition \ref{prop:prop:segments}). Hence, $-p$ must be joinable to $q$.
\end{proof}
\begin{uw}\label{rem:S2}
 Following the lines of the analysis conducted in Remark \ref{rem:S1}, we conclude that the configuration of $a$ and $b$ in case (S2) differs from that in (S1) only by one additional double segment $R$, which intersects $a$ in one of the arcs of $a\bez(a_1\cup a_2)$ ($a_1$ and $a_2$ are defined as in Remark \ref{rem:S1}). Hence the configuration is as in Figure \ref{r21}. (Note that the mutual position of $p,q$, and $R$ is determined by the assumption that $\{p,q,R\}$ is positively oriented.)
\end{uw}
%
%
For the rest of this section, we will use the notation introduced in Remarks \ref{rem:S1} and \ref{rem:S2}, that is $p_1=p$, $p_n$ are segments of $b$ which together with arcs $a_1,a_2$ of $a$ bound a rectangle $\Gamma$ that contains all segments of $b$ joinable to $p$, $p_1$ goes up from $a$ and leaves $N_a$ to the right of $p_n$ (Figure \ref{r21}). Moreover, assume that $p$ starts at $A$ in a rectangle $r_A$ of $N_{a\cap b}$ and it terminates in $B$ in a rectangle $r_B$. Let $r_1$ and $r_2$ be rectangles of $N_{b\bez a}$ which precede and follow $p$, respectively. 
\begin{lem}\label{rem:no:adjacency:s1}
Suppose that $a$ and $b$ are in the special position (S1) and let $u_1,\ldots,u_n$ and $v_1,\ldots,v_n$ be the oriented segments of $b$ which respectively go down/up from the initial/terminal points of $p_1,\ldots,p_n$. 
Then $u_i$ is not joinable to $u_j$ for some $i\neq j$ and $v_i$ is not joinable to $v_j$ for some $i\neq j$.
\end{lem}
\begin{proof}
If for example all the $u_i$ are mutually joinable, then $a$ and $b$ are circles on the annulus given by adjacency disks between $p_i$ and $u_i$.This contradicts the assumption $I(a,b)\geq 4$.
\end{proof}
\begin{lem}\label{rem:no:adjacency:s2}
Suppose that $a$ and $b$ are in the special position (S2) and let $u_1,\ldots,u_n$ and $v_1,\ldots,v_n$ be the oriented segments of $b$ which respectively go down/up from the initial/terminal points of $p_1,\ldots,p_n$. 
Then, the segments that constitute $R$ are joinable neither to $u_1$ nor to $v_1$. Moreover, $u_i$ is not joinable to $u_j$ for some $i\neq j$ or $v_i$ is not joinable to $v_j$ for some $i\neq j$.
\end{lem}
\begin{proof}
If a segment of $R$ is joinable to $u_1$ or $v_1$ then $a$ and $b$ are in the special position (S1), which is not possible. Next, suppose to the contrary that all the $u_i$ are mutually joinable and all the $v_i$ are mutually joinable. Given that $I(a,b)\geq 4$, $v_i=-u_j$ for some $i,j$. Hence, we can assume that each $v_i$ is joinable to $-u_1$. One of the $u_i$ must terminate in a rectangle $r$ that corresponds to $R$ and also one of the $v_i$ must terminate in $r$. However, this situation contradicts Proposition \ref{prop:prop:segments} given that $v_i$ is joinable to $-u_i$.
\end{proof}
Suppose that the component of $N\bez N_{a\cup b}$, which is determined by $p_1=p$ and an arc of $a\bez (a_1\cup a_2)$, is an exterior $n$-gon $\Delta$. Let $t_1,t_2$ be these boundary sides of $r_1$ and $r_2$, respectively, which enter $r_A$ and $r_B$ on the right of $b\cap r_A$ and $b\cap r_B$ (Figure \ref{r21}).

Now we define two \Mob s associated with $p$. The first one $M_1$ is the union of the rectangle $r_p$ of $N_{b\bez a}$ that contains $p$, rectangles $r_A,r_B$, and a single rectangle $r_{AB}$ of $N_{a\bez b}$ that connects $r_A$ and $r_B$. The second one $M_2$ is the union of $r_A, r_B, r_{AB}$ and all the rectangles that connect $r_A$ and $r_B$ along the boundary of $\Delta$.

Finally, for the rest of this section assume that $c\in{\cal C}$ and $c$ winds around $b$.
\begin{lem}\label{LemSpecS1c}
 Let $s$ be an arc of type C of $c\cap N_a$ which connects the initial points of oriented segments $p'$ and $q'$ of $c$ which run parallel to $p_1$ and $-p_1$ respectively. Then at least one of the oriented arcs of $c\cap N_a$ following $p'$ and $q'$ is an arc of type $D$ which turns to the left as it enters $N_a$.
\end{lem}
\begin{proof}
 Two possible configurations of arcs $p'$ and $q'$ exist (they can pass each other in two different ways). However, these configurations lead to the same conclusions. Hence, assume that we have the configuration shown in Figure \ref{r22}(ii). Consider the arc $t$ of $c\cap N_a$ following $p'$. After entering $N_a$, this arc must turn to the left and is either of type C or D. If $t$ was of type C, then $c$ would wind at least 3 times around the core of the \Mob\ $M_1$, which contradicts Lemma \ref{MobRest}.
 Hence, $t$ is an arc of type D.
\end{proof}
Let $t$ be a common boundary of a rectangle $r$ in $N_{a\bez b}\cup N_{b\bez a}$ and an exterior $n$-gon $\Delta$. We say that an arc $q$ of $c\cap r$ is a \emph{bounding segment} for $\Delta$ (with respect to $t$) if no arcs of $c$ exist between $t$ and $q$ in $r$. Clearly, $c\cap r$ is either empty or it contains exactly one bounding segment for $\Delta$ with respect to $t$. 
\begin{lem}\label{LemSpecS1a}
Let $r_p$ be the rectangle in $N_{b\bez a}$ that contains $p$.
 If $s_1$ is an arc of $c$ that passes through $r_1,r_A,r_p$ and is of type A in $r_A$, and 
 $s_2$ is an arc of $c$ which passes through $r_2,r_B,r_p$ and is of type A in $r_B$, then either $s_1\cap r_1$ is not a bounding segment for $\Delta$ with respect to $t_1$ or $s_2\cap r_2$ is not a bounding segment for $\Delta$ with respect to $t_2$.
\end{lem}
\begin{proof}
 Two possible configurations of arcs $s_1$ and $s_2$ exist (they can pass each other in two different ways). However, these configurations lead to the same conclusions. Hence, assume that we have the configuration shown in Figure \ref{r23}(i). 
 \begin{figure}[h]
 \begin{center}
\fig{0.99}{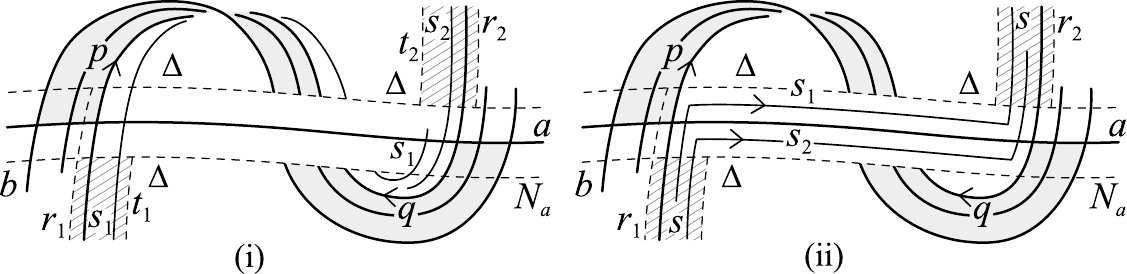}
 \caption{Configuration of arcs, Lemmas \ref{LemSpecS1a}, \ref{LemSpecS1b} and \ref{Lem:s13:inner}.}\label{r23} %
 \end{center}
 \end{figure}
 
 Consider the arc $s$ of $c\cap N_a$ following $s_1$. If $s$ was an arc of type C, then $c$ would wind at least 3 times along the core of the \Mob\ $M_1$ and this would contradict Lemma \ref{MobRest}. Hence, $s$ is either of type A or D. In the former case, $s_2$ is not a bounding segment for $\Delta$ with respect to $t_2$, and in the later case, $s_1$ is not a bounding segment for $\Delta$ with respect to $t_1$.
\end{proof}
If $s$ is an arc of $c\in{\cal C}$ with endpoints in $r_A$ and $r_B$ that connects $r_A$ and $r_B$ in $M_2\bez r_p$, then we say that $s$ is a \emph{long bounding segment for $\Delta$}. 
\begin{lem}\label{LemSpecS1b}
Let $s_1,s_2$ be oriented arcs of $c\cap N_a$ of type $B$ such that the arc $s$ of $c$ that starts at the terminal point of $s_1$ and terminates at the starting point of $s_2$ is a long bounding segment for $\Delta$ (Figure \ref{r23}(ii)). Then the arcs of $c$ which precede $s_1$ and follow $s_2$ are not long bounding segments for $\Delta$.
\end{lem}
\begin{proof}
 If for example the arc that precedes $s_1$ was a long bounding segment for $\Delta$, then $c$ would be a curve in the \Mob\ $M_2$ which winds at least 3 times along the core of $M_2$; this contradicts Lemma \ref{MobRest}.
\end{proof}
\begin{lem}\label{LemSpecS1ab}
Assume that an arc $s$ of $c\cap N_a$ of type B exists, which starts in $r_A$ and terminates in $r_B$. 
If $s_1$ [or $s_2$] is an arc of $c$ that is of type A in $r_A$ [or $r_B$], then $s_1$ [or $s_2$] is not a bounding segment for $\Delta$ with respect to $t_1$ [or $t_2$].
\end{lem}
\begin{proof}
 Given that $c$ cannot intersect itself, $s_1$ and $s_2$ must enter $N_a$ on the left of $s$ [our point of view here is along $s_1/s_2$ towards $r_A/r_B$; see Figure \ref{r24}(i)].
 \begin{figure}[h]
 \begin{center}
\fig{0.99}{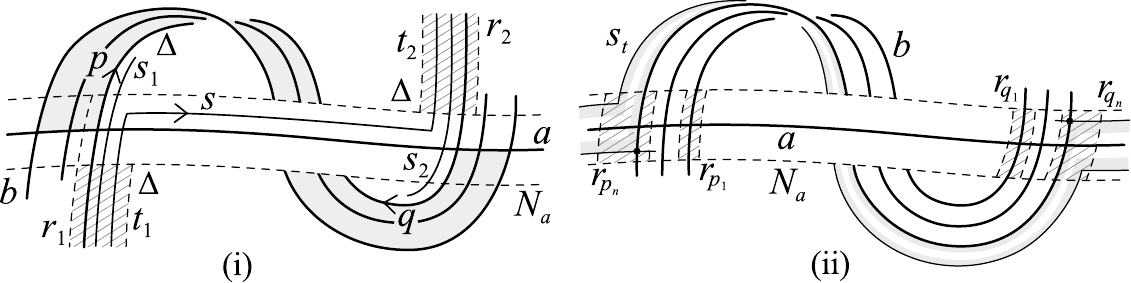}
 \caption{Configuration of arcs, Lemma \ref{LemSpecS1ab} and Proposition \ref{Main:prop:s1}.}\label{r24} %
 \end{center}
 \end{figure}
\end{proof}
Now we are ready to adopt the proof of Proposition \ref{Prop:main:g3} to the special cases (S1) and (S2). However, as we observed in Example \ref{ex:spec:1}, the notion of rigidity is too strong in these cases. Hence, we replace it with the notion of weak rigidity (see Section \ref{sec:weak}).

\begin{prop}\label{Main:prop:s1}
 Let $a$ and $b$ be two generic two-sided circles in $N$ such that $I(a,b)\geq 4$ and $\{a,b\}$ is not in the special position (S3). Then, for any integer $k\neq 0$, we have
 \[t_a^k(\widehat{X}_b)\podz \widehat{X}_a\quad\text{and}\quad t_b^k(\widehat{X}_a)\podz \widehat{X}_b.\]
\end{prop}
\begin{proof}
As we observed in Remark \ref{Rem:weak:subs2} the assumption about weak winding of $c$ around $b$ suffices to prove the statement of Lemma \ref{lem:ends:ab}. As a substitute for Remark \ref{uw:Rig:r3b:obst1}, we have slightly weaker Remark \ref{Rem:weak:subs}. Moreover, as observed in Remark \ref{Rem:weak:subs3}, the assumption that $c$ is weakly rigid with respect to $b$ serves as a replacement for Lemma \ref{uw:Rig:r3b:obst2}. Hence, in most parts, we can copy verbatim the proof of Proposition \ref{Prop:main:g3}, yet there are some differences, which we study in detail below.

Suppose first that $a$ and $b$ are not in the special position (S1) or (S2).
In such a case, by Lemmas \ref{Lem:two:NewVer} and \ref{Lem:WeakWin:NewVer}, $d_5$ is weakly rigid with respect to $a$ and $d_5$ winds weakly around $a$. Moreover, Lemma \ref{Lem:two:NewVer} implies that $I(d_5,b)>I(d_5,a)$.
%
%

Therefore, for the rest of this section, we can concentrate on circles $a$ and $b$, which are in the special position (S1) or (S2) and are hence in the situation described in Lemmas \ref{lem:spec:diff:sides}, \ref{lem:spec:same:sides} and Remarks \ref{rem:S1}, \ref{rem:S2}. 
 
 \emph{Counting intersection points between $d_5$ and $b$.}
 If we follow the proof of Proposition \ref{Prop:main:g3}, then the first place it can fail in cases (S1) and (S2) is the proof of Lemma \ref{lem:one:new:inters}. However, as we will show below, even the stronger statement of Lemma \ref{Lem:two:NewVer} holds true in cases (S1) and (S2). The possible failure of the argument in Lemma \ref{Lem:two:NewVer} follows from the fact that an arc $s'$ of $c\cap N_a$ of type A, B, or C may exist, which yields an arc $s$ of $d_5\cap N_a$ that is parallel to $a$ in fewer than two rectangles of $N_{a\cap b}$. Moreover, the proof of that lemma provides 
 a specific description of possible configurations of $p,q$ and $s'$ that may lead to such a failure. Three possibilities exist in the case (S1)/(S2): $s'$ may be an arc of type A in $r_A/r_B$, $s'$ may be an arc of type B that connects $r_A$ and $r_B$, or $s'$ may be an arc of type C that connects $r_A$ and $r_B$. The following lemma shows that in each of these cases, strong restrictions are given for reductions of types IIIb/IIIc.
  \begin{lem}\label{Lem:s13:inner}
 Let $c$ be as in the proof of Proposition \ref{Prop:main:g3}.
  \begin{enumerate}
   \item Let $s$ be an arc of $d_3\cap N_a$ that corresponds to an arc $s'$ of $c\cap N_a$. If $s'$ is an arc of type C with endpoints in $r_A$ and $r_B$, then $s$ can admit a reduction of type IIIc only on one side of $s'$.
 \item Let $s$ be an arc of $c$, which is the innermost long bounding segment for $\Delta$ (that is, $s$ is a bounding segment for $\Delta$ in rectangles of $N_{(a\cup b)\bez(a\cap b)}$) and which leads to a reduction of type IIIb across $\Delta$. Then, the arcs of $c\cap N_a$ that precede/follow $s$ are arcs of type B.
 \item Suppose that a long bounding segment of $c$ exists, which leads to a reduction of type IIIb across $\Delta$. Then, no arc of $c$ which is of type $A$ in $r_A$ or $r_B$ leads to a reduction of type IIIb across $\Delta$.
 \item No arc of $c\cap N_a$ that is of type A in $r_A$ or $r_B$ leads to a reduction of type IIIb across $\Delta$.
  \item If $s_1$ is an arc of $c$ of type B with endpoints in $r_A$ and $r_B$, then $s_1$ can lead to a reduction of type IIIb only on one side of $s_1$. 
  \end{enumerate}
 \end{lem}
\begin{proof}\noindent
\begin{enumerate}
 \item If $s'$ is an arc of type C, then by Lemma \ref{LemSpecS1c}, at least one of the arcs that precede/follow $s'$ cannot lead to a reduction of type IIIc (because it turns to the left as it enters $N_a$).
 \item By Lemma \ref{LemSpecS1a}, the arcs that precede/follow $s$ cannot be both of type A, and by Lemma \ref{LemSpecS1ab}, they are not arcs of types A and B. Hence, both arcs must be of type B.
 \item If we choose the arc $s$ of $c$ that admits a reduction of type IIIb across $\Delta$ and is the innermost long bounding segment for $\Delta$, then by the previous point, we know that the arcs $s_1,s_2$ of $c$ that precede/follow $s$ are of type B in $r_A$ and $r_B$ (Figure \ref{r23}(ii)). But then, by Lemma \ref{LemSpecS1b}, we know that the arcs that respectively precede $s_1$ and follow $s_2$ do not admit reductions of type IIIb. Hence, they are the obstacles for the arcs of type A in $r_A,r_B$ to admit such a reduction.
 \item The conclusion is a direct consequence of the previous point.
 \item At this point, we know that if an arc $s$ of $c$ admits a reduction of type IIIb across $\Delta$, then this must be a long bounding segment that connects two arcs $s_1,s_2$ of $c\cap N_a$ of type B. But then, by Lemma \ref{LemSpecS1b}, the arc that precedes $s_1$, which is different from $\pm s$, does not admit reductions of type IIIb.
\end{enumerate}
\end{proof}
The above lemma implies that even in cases (S1)/(S2), none of the arcs $s'$ of $c\cap N_a$ can be reduced so that the resulting arc $s$ of $d_5\cap N_a$ is parallel to $a$ in fewer than 2 rectangles of $N_{a\cap b}$. Hence, Lemma \ref{Lem:two:NewVer} remains valid in these cases. In particular, $d_5$ is weakly rigid with respect to $a$ and $I(d_5,b)>I(d_5,a)$.

\emph{Weak winding of $d_5$.}
Finally, we need to show that $d_5$ winds weakly around $a$. Denote by 
$r_{p_1}=r_A,r_{p_2},\ldots, r_{p_n}$ and $r_{q_1}=r_B,r_{q_2},\ldots, r_{q_n}$ the rectangles of $N_{a\cap b}$, which contain the starting points of $p_1,p_2,\ldots,p_n$ and $-p_1,\ldots,-p_n$, respectively (Figure \ref{r21}). In case (S2), by $r_R$ denote the rectangle of $N_{a\cap b}$ that corresponds to $R$.

Let $s'$ be an arc of $c$ that is of type A in $r_{p_1}$, and let $s''$ and $s$ be the corresponding arcs of $d_3$ and $d_5$ respectively. By Lemma \ref{Lem:s13:inner}, the bottom part of $s''$ does not allow a reduction of type IIIb. The top part of $s''$ may admit reductions of type IIIc which can reach at most $r_{p_n}$. Hence $s$ is parallel to $a$ in all the rectangles $r_{q_1},\ldots, r_{q_n}$ and $r_R$ in case (S2). 
Moreover, 
if the reductions on the top part $s_t''$ of $s''$ reach $r_{p_n}$, then the corresponding arc $s_t$ of $d_5$ is two-sided with respect to $b$, that is, $s_t$ together with an arc of $b$ that connects the intersection points of $s_t$ and $b$ is a two-sided circle [Figure \ref{r24}(ii)]. Hence, 
$s_t$ is 
not a one-sided boundary 3-segment of an exterior hexagon and by Lemma \ref{lem:spec:same:side:prod:weak}, $s$ is weakly rigid with respect to $a$ on both sides of $r_{q_n}$ in case (S1) and on both sides of $r_R$ in case (S2).
Moreover, $s$ is rigid with respect to $a$ on both sides of $r_{q_2},\ldots,r_{q_{n-1}}$.

In exactly the same way, by considering the arc of $c$ which is of type A in $r_{q_1}$, 
we prove that $d_5$ is weakly rigid with respect to $a$ on both sides of rectangles $r_{p_2},\ldots,r_{p_{n}}$.
Hence, to finish the proof, we need to show the existence of an arc of $d_5\cap N_a$ that is  weakly rigid on both sides of $r_{q_1}$ and an arc of $d_5\cap N_a$ that is weakly rigid on both sides of $r_{p_1}$.

In case (S1), let $s'$ be an arc of $c$ that is of type A in $r_{p_n}$ and that is weakly rigid on both sides of $r_{p_n}$.
Let $s''$ and $s$ be the corresponding arcs of $d_3$ and $d_5$ respectively. By Lemma \ref{s3:lem1}, the bottom part of $s''$ does not allow a reduction of type IIIb. It may admit reductions of type IIIc, but by Lemma \ref{rem:no:adjacency:s1}, they can not reach $r_{p_1}$. The top part of $s''$ does not admit a reduction of type IIIb (Lemma \ref{s3:lem1}) and it does not allow a reduction of type IIIc (Proposition \ref{prop:prop:segments} and Remark \ref{Uw:Red:join:cascade}). Hence, $s$ is parallel to $a$ in $r_{p_1},r_{q_1},r_{q_2}$. Similarly, by taking an arc $s'$ of $c$ that is weakly rigid on both sides of $r_{q_1}$, we construct an arc $s$ of $d_5$ that is parallel to $a$ in $r_{p_2},r_{p_1},r_{q_1}$.




In case (S2), by Lemma \ref{rem:no:adjacency:s2}, we know that either the segments that go down from the starting points of $p_1,\ldots,p_n$ or the segments that go up from the terminal points of $p_1,\ldots,p_n$ are not mutually joinable. The argument in both cases is completely analogous. Hence, the latter case is assumed. Let $s'$ be an arc of $c$ that is of type A in $r_{q_n}$ and is weakly rigid on both sides of $r_{q_n}$. 
Let $s''$ and $s$ be the corresponding arcs of $d_3$ and $d_5$ respectively. By Lemma \ref{s3:lem1}, the top part of $s''$ does not allow a reduction of type IIIb. It may admit reductions of type IIIc, but by our assumption they cannot reach $r_{q_1}$. The bottom part of $s''$ does not admit a reduction of type IIIb. Hence, $s$ is parallel to $a$ in $r_{p_2},r_{p_1},r_{q_1}$. Similarly, if $s''$ is an arc of $c$ that is of type $A$ in $r_{p_n}$ 
and $s$ is 
the corresponding arc of 
$d_5$,
then $s$ is parallel to $a$ in $r_{q_1}, r_{q_2}$, and is either parallel to $a$ in $r_{p_1}$, or is weakly rigid on the $r_{p_1}$ side of $r_{q_1}$ (Lemma \ref{lem:spec:same:side:prod:weak}). This completes the proof that $d_5$ winds weakly around $a$.

\end{proof}
 \section{The special case (S3)}\label{sec:specB}
\begin{lem}\label{lem:spec:pos:s3}
 Let $a$ and $b$ be two generic two-sided circles in $N$ such that $I(a,b)\geq 4$, and assume that 
 oriented segments $p,q$ of $b$ and a double segment $R$ of $b$ exist such that 
 $p$ starts and terminates on one side of $a$, $q$ starts and terminates on the other side of $a$,
 each double segment of $b$, which is different from $R$ contains an oriented segment joinable to $p$ or $q$, $\{p,q,R\}$ is positively oriented [that is, $a$ and $b$ are in the special position (S3)]. 
 Then, the oriented arcs that constitute $R$ are joinable neither to $p$ nor to $q$.
\end{lem}
\begin{proof}
Suppose that in each double segment of $b$, an oriented segment joinable either to $p$ or $q$ exists. By Proposition \ref{prop:prop:segments}, all segments that start at endpoints of segments joinable to $p$ and are not joinable to $-p$ are joinable to $q$. And vice versa, all segments that start at endpoints of segments joinable to $q$ and are not joinable to $-q$ are joinable to $p$. Hence, $b$ is a circle on an annulus [which is an union of adjacency disks between the segments joinable to $p$ and $q$ -- see Figure \ref{r15}(ii)]. However, this contradicts the assumption that $I(a,b)\geq 4$.
\end{proof}
\begin{uw}\label{rem:s3:conf}
 Let $a$ and $b$ be circles in the special position (S3), let $p_1,p_2,\ldots,p_n$ be all oriented segments of $b$ joinable to $p$, and let $q_1,\ldots,q_m$ be all oriented segments joinable to $q$. The union of adjacency disks between $p_1,p_2,\ldots,p_n$ gives a rectangle $\Delta_p$ and the union of adjacency disks between $q_1,q_2,\ldots,q_m$ gives a rectangle $\Delta_q$. By~Lemma \ref{lem:spec:pos:s3}, we know that one of the segments $p_1,\ldots,p_n,q_1,\ldots,q_m$ ends in $R$ and is followed by an arc that is joinable neither to $p$ nor $q$. Without loss of generality we can assume that $p$ is above $a$, $p_n$ ends in $R$, $p_1,p_n$ are on the boundary of $\Delta_p$ and $q_1, q_m$ are on the boundary of $\Delta_q$. Given that all oriented segments $p_1,\ldots,p_{n-1}$ must be followed by segments joinable to $q$, and all segments $q_1,\ldots,q_m$ must be followed by segments joinable to $p$, we can also assume that $p_1,\ldots,p_{n-1}$ are followed by respectively $q_1,\ldots,q_m$. Hence the configuration is as in Figure \ref{r21b}. 
 \begin{figure}[h]
 \begin{center}
\fig{0.72}{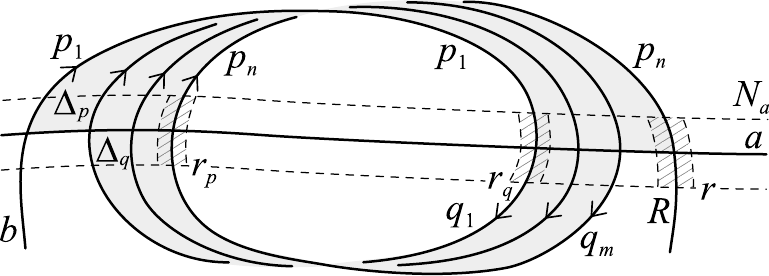}
 \caption{Configuration of arcs in case (S3).}\label{r21b} %
 \end{center}
 \end{figure}
 (Note that the mutual position of $p,q$ and $R$ is determined by the assumption that $\{p,q,R\}$ is positively oriented.) 
\end{uw}
For the rest of this section we will use the notation introduced in the above remark. Moreover, by $r,r_p,r_q$ we denote the rectangles of $N_{a\cap b}$ that correspond respectively to $R$, the initial point of $p_n$, and the initial point of $q_1$ (Figure \ref{r21b}).
\begin{lem}\label{lem:spec:same:side:hex}
 No component of $N\bez N_{a\cup b}$ is an exterior hexagon.
\end{lem}
\begin{proof}
 Given that the segment that connects the terminal point of $p_n$ with the initial point of $p_1$ is one-sided, checking that $r$ can be a vertex of an exterior $n$-gon only for $n=4,8$ is straightforward. All the other exterior $n$-gons are rectangles.
\end{proof}
%
Let $\widehat{X}_a$ and $\widehat{X}_b$ be defined as in Section \ref{sec:weak}. 
\begin{prop}\label{Main:prop:s3}
 Let $a$ and $b$ be two generic two-sided circles in $N$ such that $I(a,b)\geq 4$. Then for any integer $k\neq 0$ we have
 \[t_a^k(\widehat{X}_b)\podz \widehat{X}_a\quad\text{and}\quad t_b^k(\widehat{X}_a)\podz \widehat{X}_b.\]
\end{prop}
\begin{proof}
 Observe first that if $a$ and $b$ are not in the special position (S3), then the proposition coincides with Proposition \ref{Main:prop:s1}. 
 Therefore, we can concentrate on the circles $a$ and $b$, which are in the special position (S3) and are hence
 in the situation described in Remark \ref{rem:s3:conf}.
 
 As in special cases (S1) and (S2), we observe that weak winding is sufficient for repeating most of the proof of Proposition \ref{Prop:main:g3}. Hence, we follow the lines of the proof of Proposition \ref{Main:prop:s1} and we concentrate on places where that proof can fail. The first such place is the proof that $d_5$ is weakly rigid with respect to $a$. In the proof of Proposition \ref{Main:prop:s1} we obtained weak rigidity of $d_5$ as a consequence of Lemma \ref{Lem:two:NewVer}, which may not be true in the case (S3). 
 However, as a replacement we have the following lemma:
 \begin{lem}\label{lem:two:intersections:s3}
 Let $s'$ be an arc of $c\cap N_a$ of type A, B or C, and let $s$ be the arc of $d_5\cap N_a$ which corresponds to $s'$.
 \begin{enumerate}
  \item If $s'$ is an arc of type A in $r_p$ or $s'$ is an arc of type C that connects $r_p$ and $r_q$, then $s$ is parallel to $a$ in $r$.
  \item If $I(a,b)=4$ and $s'$ is an arc of type A in $r$ or $s'$ is an arc of type C with the top part in $r$, then $s$ is parallel to $a$ in $r_p$.
  \item If $s'$ is not as in previous points, then $s$ is parallel to $a$ in at least two rectangles of $N_{a\cap b}$.
 \end{enumerate}
 \end{lem}
 \begin{proof}
 Observe first that the boundary component of $N_{a\cup b}$ that contains $p_n$ and $p_1$ cannot bound an exterior rectangle. Otherwise, $a$ would bound a \Mob. Hence the segments of $c$, which run parallel to $p_1$, do not lead to reductions of types IIIb nor IIIc. Let $s''$ be an arc of $d_3$ that corresponds to $s'$.
 \begin{enumerate}
  \item If $s'$ is an arc of type A in $r_p$, then the bottom part of $s''$ can admit a reduction of IIIb and then both sides of the obtained arc may admit reductions of type IIIc. By Lemma \ref{lem:spec:pos:s3}, these reductions cannot reach $R$.
  Hence, $s$ is parallel to $a$ in $r$. 

  The situation is completely analogous if $s'$ is an arc of type C that connects $r_p$ and $r_q$ (by Lemma \ref{lem:ends:ab}, $s''$ does not admit reductions of type IIIb in this case).
  \item If $s'$ is an arc of type A in $r$, then the top part of $s''$ may admit some reductions of type IIIc, which can reach at most $r_{q}$. The bottom part of $s''$ may admit a reduction of type IIIb, but then it does 
  not admit any further reductions of type IIIc. Hence, $s$ is parallel to $a$ in $r_p$.
  
  The situation is completely analogous if $s'$ is an arc of type C with the top part in $r$ (by Lemma \ref{lem:ends:ab}, $s''$ does not admit reductions of type IIIb in this case).
  \item If $s'$ is not as in the previous points, then it is straightforward to check that $s$ is parallel to $a$ in either the rectangles of $N_{a\cap b}$ that contain the initial points of $p_{n-1}$ and $p_n$, or in rectangles of $N_{a\cap b}$ that contain the terminal points of $p_1$ and $p_2$.  
 \end{enumerate}
 \end{proof}
 As a consequence of the above lemma, we obtain that every arc $s$ of $d_5$ that is parallel to $a$ in a rectangle $t$ of $N_{a\cap b}$ is weakly rigid on one side of $t$. In fact, if $s$ is parallel to $a$ in at least two rectangles of $N_{a\cap b}$, then $s$ is rigid in $t$. If $s$ is parallel to $a$ in only one rectangle of $N_{a\cap b}$, then $s$ was obtained from an arc of $d_3$ by reductions of types IIIb and IIIc.  In such a case, by 
  Lemmas \ref{lem:spec:same:side:prod:weak} and \ref{lem:spec:same:side:hex}, $s$ is weakly rigid on one side of $t$. In particular, $d_5$ is weakly rigid with respect to $a$.
 
 Finally, we will show that $d_5$ winds weakly around $a$. Let $s_1'$ and $s_2'$ be arcs of $c$ of type A in $r_q$ and the rectangle $r_{p_1}$ of $N_{a\cap b}$ that contain the initial point of $p_1$, respectively. Assume also that $s_1'$ and $s_2'$ are weakly rigid below $a$, and let $s_1'',s_2''$ and $s_1,s_2$ be arcs of $d_3$ and $d_5$ that correspond to $s_1',s_2'$, respectively. The arc $s_2''$ does not admit any reductions of types IIIb and IIIc. Hence, $s_2$ is parallel to $a$ in all rectangles of $N_{a\cap b}$ except $r_{p_1}$. The bottom part of $s_1''$ may admit some reductions of type IIIc, but these reductions cannot reach $r$. Hence, $s_1$ is parallel to $a$ in $r$ and in all the rectangles of $N_{a\cap b}$ between $r_{p_1}$ and $r_p$. This proves that $d_5$ winds weakly around $a$. 
%
%
 \end{proof}
 \section{The case of $I(a,b)=3$ with nonorientable $N_{a\cup b}$} \label{sec:g3}
If $I(a,b)=3$, then we still follow the proofs of Propositions~\ref{Prop:main:g3} and \ref{Main:prop:s1}. However, as in the special case (S3), the problem is that in general, if $I(a,b)=3$, then Lemma \ref{Lem:two:NewVer} is not true. Fortunately, this case is quite special because of the following proposition:
\begin{prop}\label{Prop:gen:nodisk:i3}
 If $a$ and $b$ are two generic two-sided circles in a surface $N$ such that $|a\cap b|=I(a,b)=3$ and $N_{a\cup b}$ is nonorientable,
 then the exterior $n$-gons for $N_{a\cup b}$ can exist only if $n=10$ or $n=12$.
\end{prop}
\begin{proof}
 If all segments of $b$ are two-sided, then $N_{a\cup b}$ is orientable. Hence, one-sided segments of $b$ exist, and two such segments $p_1,p_2$ must exist. If we denote the two-sided segment of $b$ as $p_3$, then we can assume that $p_1,p_2,p_3$ are oriented so that $p_2$ follows $p_1$ and $p_3$ follows $p_2$.
 
 If all segments $p_1,p_2,p_3$ start and terminate on different sides of $a$, then the arc of $a$ that connects the initial point of $p_1$ with the initial point of $p_2$ starts and terminates on the same side of $b$. Hence we can interchange $a$ with $b$, and we can always assume that at least one segment of $b$ starts and terminates on the same side of $a$. In such a case one of the remaining segments must also start and terminate on the same side of $a$, and the final segment must connect two different sides of $a$. Hence we have the configuration of arcs as in Figure \ref{r03b}. 
 \begin{figure}[h]
 \begin{center}
\fig{0.99}{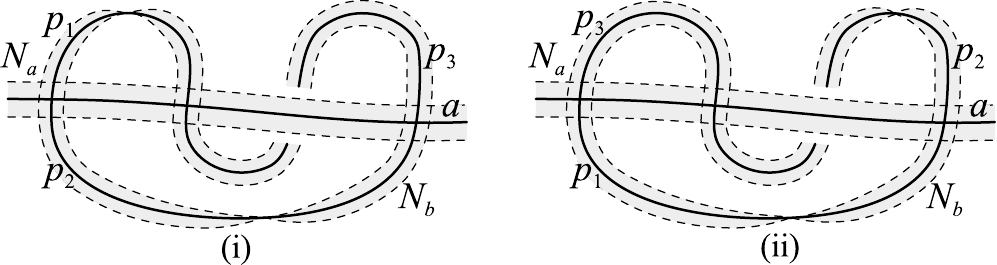}
 \caption{Configurations of segments of $b$, Proposition \ref{Prop:gen:nodisk:i3}.}\label{r03b} %
 \end{center}
 \end{figure}
 We still have two possibilities: either $p_3$ is a segment that connects two different sides of $a$ [Figure \ref{r03b}(i)], or $p_3$ starts and terminates on the same side of $a$ [Figure \ref{r03b}(ii)]. Checking that in the first case the boundary of $N_{a\cup b}$ is connected and it is a 12-gon is straightforward. In the second case, the boundary of $N_{a\cup b}$ has two components: a bigon and a 10-gon. 
\end{proof}
The above proposition implies that if $I(a,b)=3$, then no adjacent segments of $b$ exist. Hence, no reductions of type IIIc exist. Moreover, the notion of weakly rigid arcs simplifies, given that no exterior hexagons exist. 


Let $\kre{X}_a$ be the set of isotopy classes of circles $c$ in $N$, which satisfy the following conditions:
\begin{enumerate}
 \item $c\in {\cal C}$,
 \item $I(c,a)=|c\cap a|$, $I(c,b)=|c\cap b|$,
 \item $I(c,a)<I(c,b)$,
 \item $c$ winds weakly around $a$.
\end{enumerate}
Similarly, we define $\kre{X}_b$ by requiring (1)--(2) above and additionally
\begin{enumerate}
 \item[(3')] $I(c,b)<I(c,a)$,
 \item[(4')] $c$ winds weakly around $b$.
\end{enumerate}
The main difference between sets $\kre{X}_b$ and $\widehat{X}_b$ is the lack of the assumption that $c$ is weakly rigid with respect to $b$. As a replacement for this assumption we have the following lemma:
\begin{lem}\label{lem:g3:rig:one:side}
 Let $a$ and $b$ be two generic two-sided circles in $N$ such that $I(a,b)=3$ and $c\in\kre{X}_b$. Let $s$ be an arc of $c\cap N_a$ of type A and let $s'$ be the arc of $d_3$ that corresponds to $s$. Then, $s'$ can admit a reduction of type IIIb with only one orientation of $s$.
\end{lem}
\begin{proof}
 Suppose to the contrary that $s$ leads to reductions of type IIIb with both orientations of $s$ (that is, on both sides of $a$) and let $r$ be the rectangle of $N_{a\cap b}$ that contains $s$. Observe first that both ends of $s'$ must be involved in two different reductions of type IIIb; otherwise, $s$ would intersect $a$ only once, which would contradict the assumption that $c$ winds around $b$. By Proposition \ref{Prop:gen:nodisk:i3}, at most one exterior $n$-gon $\Delta$ exists. Hence, both reductions on $s'$ must correspond to the same one-sided segment $p$ of $b$; this segment corresponds to one side of the bigon defining a reduction. Therefore, if we follow $s$ in both directions until we obtain the arcs $s_1,s_2$ of $c\cap N_a$ which intersect $a$, then $s_1$ and $s_2$ must enter $N_a$ in $r$. Moreover, the arcs of $c$ that connect $s$ with $s_1$ and $s_2$ run through rectangles of $N_{a\cup b}$, which together with $r$, constitute a \Mob\ strip $M$. Hence, $s_1$ and $s_2$ enter $r$ on the same side of $s$ [Figure \ref{r07b}(i)]. 
 \begin{figure}[h]
 \begin{center}
\fig{0.9}{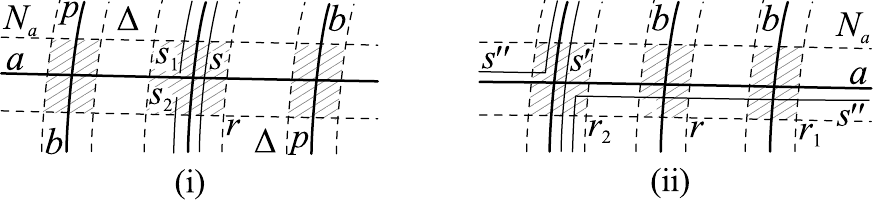}
 \caption{Configurations of segments of $c$ and $d_3$, Lemma \ref{lem:g3:rig:one:side} and Proposition \ref{Main:prop:i3}.}\label{r07b} %
 \end{center}
 \end{figure}
 Therefore, by Lemma \ref{lem:ends:ab} and Remark \ref{Rem:weak:subs2}, at least one of the arcs $s_1$ and $s_2$ must be an arc of type A, which leads to a contradiction with Lemma~\ref{MobRest}.
\end{proof}
\begin{prop}\label{Main:prop:i3}
 Let $a$ and $b$ be two generic two-sided circles in $N$ such that $I(a,b)=3$. Then for any integer $k\neq 0$, we have
 \[t_a^k(\kre{X}_b)\podz \kre{X}_a\quad\text{and}\quad t_b^k(\kre{X}_a)\podz \kre{X}_b.\]
\end{prop}
\begin{proof}
 As in the special case (S3), the main problem that may lead to the failure of the proof of Proposition \ref{Main:prop:s1} is the fact that if $I(a,b)=3$, then Lemma \ref{Lem:two:NewVer} may not be true.
 However, as a replacement for that lemma we have the following slightly weaker result:
\begin{lem}\label{lem:two:intersections:i3}
Let $s'$ be an arc of $c\cap N_a$ of type A, B or C, and let $s$ be the arc of $d_5\cap N_a$ that corresponds to $s'$. Then, $s$ is parallel to $a$ in at least one rectangle of $N_{a\cap b}$
 \end{lem}
 \begin{proof}
 Let $s''$ be an arc of $d_3\cap N_a$ that corresponds to $s'$. If $s'$ is an arc of type A, then by Lemma \ref{lem:g3:rig:one:side}, $s''$ may admit a reduction of type IIIb only on one side of $a$. Hence, $s$ is parallel to $a$ in at least one rectangle of $N_{a\cap b}$. If $s'$ is an arc of type B, then $s''$ may admit reductions of type IIIb on both sides of $a$, but this leads to same conclusion as above. Arcs of type C does not allow reductions of type IIIb (see Lemma \ref{lem:ends:ab} and Remark \ref{Rem:weak:subs2}). Hence, in this case, $s$ is parallel to $a$ in exactly one rectangle of $N_{a\cap b}$.
 \end{proof}
 As a consequence of the above lemma and Proposition \ref{Prop:gen:nodisk:i3} we have the following simplified version of Lemma~\ref{lem:spec:same:side:prod:weak}:
\begin{lem}\label{lem:g3:prod:weak}
 Let $q$ be an oriented arc of $d_5$ that starts in a rectangle $t$ of $N_{a\cap b}$ as an arc parallel to $a$, then it follows $a$ to the next rectangle of $N_{a\cap b}$, and then it follows an arc $s$ obtained from an arc of $d_3\bez N_a$ by a reduction of type IIIb. Then, $q$ is weakly rigid in $t$ with respect to $a$.\qed
\end{lem}
\emph{Weak rigidity of $d_5$.}
 Fix a rectangle $r$ in $N_{a\cap b}$ and assume that the orientation of arcs parallel to $a$ in $r$ is such that these arcs point to the rectangle $r_1$, which is on the right of $r$ [if the orientation is opposite we can rotate the whole picture by $180^\circ$; see Figure \ref{r07b}(ii)]. Let $s'$ be an arc of $c\cap N_a$ of type A in a rectangle $r_2$ of $N_{a\cap b}$ different from $r$ and $r_1$, and assume that $s'$ is weakly rigid in $r_2$ with respect to $b$ and with the orientation pointing down (we use the assumption that $c$ winds weakly around $b$). The arc $s''$ of $d_3$ that corresponds to $s'$ is parallel to $a$ in $r$ and $r_1$. If this arc does not admit a reduction of type IIIb, then $s'$ is in fact an arc of $d_5$ and this arc is rigid in $r$. If, on the other hand, $s''$ admit a reduction of type IIIb, then this reduction must be on the top part of $s''$. Hence, by Lemma \ref{lem:g3:prod:weak}, the arc $s$ of $d_5$ that corresponds to $s''$ is weakly rigid in $r$.
  
  \emph{Counting intersection points between $d_5$ and $b$.} 
 Lemma \ref{lem:two:intersections:i3} guarantees that for each intersection point of $c$ and $a$, we have at least one intersection point of $d_5$ and $b$. To prove that $I(d_5,b)>I(c,a)$, we need to show that for some arcs of $c\cap N_a$ the corresponding arcs of $d_5\cap N_a$ intersect $b$ at least twice. 
 
 In fact, by Proposition \ref{Prop:gen:nodisk:i3}, at most one exterior $n$-gon exists. Hence, all reductions of type IIIb must correspond to the same segment $p$ of $b$; this segment corresponds to one side of the bigon that defines a reduction. Let $r_A$ and $r_B$ be the rectangles of $N_{a\cap b}$ that contain the endpoints $A$ and $B$ of $p$. Let $r_1$ and $r_2$ be rectangles of $N_{a\cap b}$ that precede $r_A$ and $r_B$, respectively, with respect to the twisting direction in $N_a$, that is $r_1$ is the right [left] neighbor of $r_A$ if $p$ approaches $a$ from above [from below], similarly for $r_2$ (see Figure \ref{r07c}). 
 \begin{figure}[h]
 \begin{center}
\fig{0.6}{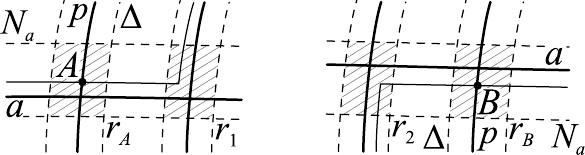}
 \caption{Configuration of rectangles of $N_{a\cap b}$, Proposition \ref{Main:prop:i3}.}\label{r07c} %
 \end{center}
 \end{figure}
 All arcs of $c\cap N_a$ of type A that lead to the reduction of type IIIb must be contained in either $r_1$ or $r_2$. Hence, if we choose an arc $s'$ of $c\cap N_a$ that is of type A in a rectangle $r$ of $N_{a\cap b}$ different from $r_1$ and $r_2$, then the corresponding arc $s''$ of $d_3$ does not admit reductions of type IIIb. Therefore, the arc $s$ of $d_5\cap N_a$ that corresponds to $s'$ is parallel to $a$ in two rectangles of $N_{a\cap b}$.
\end{proof}
 \section{The case of $I(a,b)=2$ with nonorientable $N_{a\cup b}$}\label{sec:2}
Checking that Propositions \ref{Prop:main:g3} and \ref{Main:prop:s3} are false if $I(a,b)=2$ 
and $N_{a\cup b}$ is nonorientable is not difficult. Hence, we need slightly more sophisticated analysis in that case.

The case in question is special because of the following proposition:
\begin{prop}\label{Prop:gen:nodisk}
 If $a$ and $b$ are two generic two-sided circles in a surface $N$ such that $|a\cap b|=I(a,b)=2$ and $N_{a\cup b}$ is nonorientable,
 then no component of $N\bez (a\cup b)$ is a disk.
\end{prop}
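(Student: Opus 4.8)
The plan is to reduce everything to the absence of bigons together with the genericity of $a$, by cutting $N$ open along $a$. First I would record that, since $|a\cap b|=I(a,b)$, the circles are in minimal position, so by Epstein's Proposition they form no bigon; equivalently, no component of $N\bez(a\cup b)$ is a disk bounded by a single arc of $a$ together with a single arc of $b$.

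Next I would cut $N$ along $a$. As $a$ is two-sided and $b$ meets it transversally in the two points $P_1,P_2$, the two copies $a^+,a^-$ of $a$ become boundary circles of the resulting surface $\widehat N$, and $b$ is cut into two disjoint arcs: an arc $\beta^+$ lying on the $+$ side with both endpoints on $a^+$, and an arc $\beta^-$ on the $-$ side with both endpoints on $a^-$. Since no component of $N\bez(a\cup b)$ contains $a$ in its interior, cutting does not disconnect any such component, so the components of $N\bez(a\cup b)$ correspond bijectively to those of $\widehat N\bez(\beta^+\cup\beta^-)$, and this correspondence carries disks to disks. Thus it suffices to show that no component of $\widehat N\bez(\beta^+\cup\beta^-)$ is a disk.

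Then I would analyse the boundary of a hypothetical disk component $\widehat D$. Its boundary is a single circle whose arcs lie alternately on the boundary circles $a^{\pm}$ and on the arcs $\beta^{\pm}$, with corners at the four points $P_1^{\pm},P_2^{\pm}$. The key observation is that every such arc joins two points on the same side of $a$: an arc of $a^+$ or of $\beta^+$ joins $P_1^+$ to $P_2^+$, and an arc of $a^-$ or of $\beta^-$ joins $P_1^-$ to $P_2^-$, while no arc connects a $+$ corner to a $-$ corner. Since $\partial\widehat D$ is connected, all of its corners lie on one side, say the $+$ side, so $\partial\widehat D$ is an alternating cycle in the graph with vertices $P_1^+,P_2^+$ whose edges are the two arcs of $a^+$ together with $\beta^+$. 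Because an arc of the boundary circle $a^+$ can bound $\widehat D$ on only one side, each of the two arcs of $a^+$ occurs at most once; this leaves exactly two possibilities for $\partial\widehat D$: a bigon formed by $\beta^+$ and one arc of $a^+$, or the quadrilateral formed by both arcs of $a^+$ and two copies of $\beta^+$.

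Finally I would rule out both. The bigon possibility reglues to a bigon of $a$ and $b$ in $N$, contradicting minimal position. In the quadrilateral case $\widehat D$ meets neither $a^-$ nor $\beta^-$, so the whole $+$ side of $\widehat N$ is obtained from the disk $\widehat D$ by identifying its two $\beta^+$-sides; an elementary computation of the Euler characteristic and of the boundary shows that this surface is a \Mob\ with boundary $a^+$ and no punctures. Regluing along $a$, it follows that $a$ bounds a \Mob\ without punctures in $N$, contradicting the genericity of $a$. I expect the quadrilateral case to be the crux: one must verify both that it is the only non-bigon configuration and that the self-identification along $\beta^+$ genuinely yields a \Mob\ rather than an annulus, which is precisely where the genericity hypothesis enters.
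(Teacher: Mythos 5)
Your argument in the case you treat is essentially sound, but it rests on an unjustified claim made at the very first step of the cutting construction: that after cutting $N$ along $a$ the curve $b$ falls apart into an arc $\beta^+$ with both endpoints on $a^+$ and an arc $\beta^-$ with both endpoints on $a^-$. This happens if and only if $b$ crosses $a$ in \emph{opposite} directions (with respect to a fixed co-orientation of $a$, which exists because $a$ is two-sided) at the two points of $a\cap b$. Nothing in the hypotheses forces this. In the complementary, coherent case -- $b$ crosses $a$ in the \emph{same} direction at both points -- each of the two arcs of $b$ joins $a^+$ to $a^-$ after cutting, your key observation that every arc of $\partial\widehat{D}$ joins corners on the same side is simply false, and the whole bigon/quadrilateral dichotomy on which the rest of the proof is built collapses. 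So as written the proof covers only half of the possible configurations of $a\cup b$.

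The missing case is not a removable technicality; it is exactly where the real difficulty sits. Embed a torus with one boundary component in $N$ (say $N$ is the torus with a crosscap placed inside one complementary region of $a\cup b$), and let $a,b$ be the images of the $(1,0)$-- and $(1,2)$--curves of the torus, which meet twice and coherently. Both curves are two-sided; both are essential in $H_1(N;\zz_2)$, hence bound neither a disk nor a \Mob, so they are generic; and each of the four closed curves formed by one arc of $a$ and one arc of $b$ is nontrivial in $H_1(N;\zz_2)$, so there is no bigon and $|a\cap b|=I(a,b)=2$ by Epstein's criterion. Yet one component of $N\bez(a\cup b)$ is an open square disk. So in the coherent case the conclusion itself fails, and no completion of your case analysis (or any other argument) can close the gap. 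For comparison, the paper's proof takes a different and much terser route -- it splits according to whether $N_{a\cup b}$ is orientable, asserting that a disk component forces a bigon in the orientable case and forces $a$ or $b$ to bound a \Mob\ in the nonorientable case; its nonorientable half is in substance what your quadrilateral/\Mob\ analysis proves (cutting along whichever of the two curves is crossed incoherently), but its orientable half is silent about precisely the coherent--coherent configuration above, where $N_{a\cup b}$ is a twice-holed torus and no bigon exists. In short: the step you skipped is a genuine gap, it coincides with the configuration the paper's own proof also glosses over, and it cannot be repaired without either strengthening the hypotheses or changing the statement.
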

\begin{proof}
Observe that $N_{a\cup b}$ is a Klein bottle with two boundary components (Figure \ref{r20}(i)).
\begin{figure}[h]
 \begin{center}
\fig{0.71}{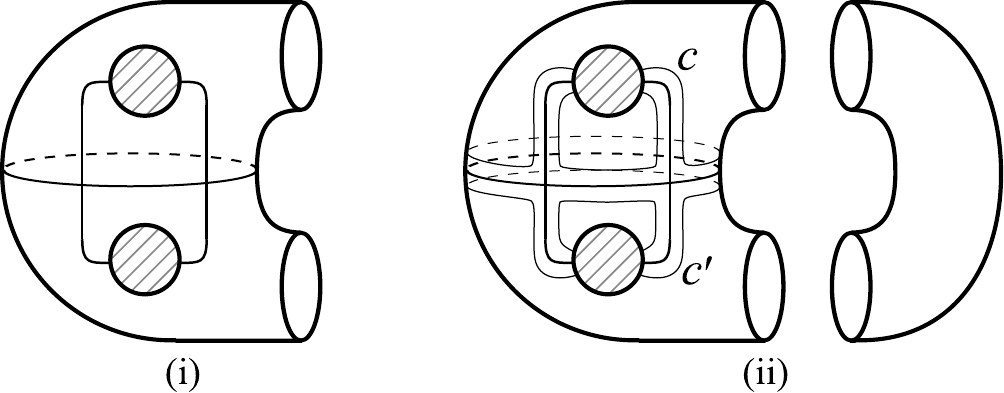}
 \caption{Klein bottle with two holes as a regular neighborhood of $a\cup b$.}\label{r20} %
 \end{center}
 \end{figure}
Hence if
one of the components of $N\bez(a\cup b)$ is a disk, then one of the circles $a$ or $b$ bounds a
\Mob\ which is a contradiction.
\end{proof}
For a circle $c\in{\cal C}$ define
\[\begin{aligned}
 J(c,a)&=\text{number of connected components of $c\bez N_a$}\\
 J(c,b)&=\text{number of connected components of $c\bez N_b$}.
\end{aligned}\]
\begin{prop} \label{Prop:gen2:J:invar}
 Let $a$ and $b$ be two generic two-sided circles in a surface $N$ such that $|a\cap b|=I(a,b)=2$ and $N_{a\cup b}$ is nonorientable. If $c,c'\in{\cal C}$ such that $c$ is isotopic to $c'$, then $J(c,a)=J(c',a)$ and $J(c,b)=J(c',b)$.
\end{prop}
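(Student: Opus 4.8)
The plan is to reduce each of the numbers $J(c,a)$ and $J(c,b)$ to a boundary intersection count, and then to show that this count is forced by the isotopy class of $c$. First I would identify the components of $c\bez N_a$. Since $N_{a\cup b}\bez N_a=N_{b\bez a}$, every component of $c\bez N_a$ is an arc contained in a single rectangle of $N_{b\bez a}$. By condition (3) in the definition of $\cal C$ such an arc joins two different sides of that rectangle, and since $c$ is disjoint from $\partial N_{a\cup b}$ it cannot reach the two sides lying in $\partial N_{a\cup b}$; hence it joins the two remaining sides, which are exactly the arcs of $\partial N_a$ bounding the rectangle. Thus each component of $c\bez N_a$ is a segment of $c$ parallel to $b$ and contributes precisely two points to $c\cap\partial N_a$, while conversely every point of $c\cap\partial N_a$ is an endpoint of such a component. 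This yields $J(c,a)=\frac{1}{2}|c\cap\partial N_a|$, and symmetrically $J(c,b)=\frac{1}{2}|c\cap\partial N_b|$. Because $a$ is two-sided we may write $\partial N_a=a'\sqcup a''$ with $a'$ and $a''$ both isotopic to $a$, so that $J(c,a)=\frac{1}{2}\bigl(|c\cap a'|+|c\cap a''|\bigr)$.

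It now suffices to prove that $|c\cap a'|$ and $|c\cap a''|$ depend only on the isotopy class of $c$ within $\cal C$. Since the geometric intersection number is an isotopy invariant, I would compare $|c\cap a'|$ with $I(c,a')$, which agree exactly when $c$ and $a'$ form no bigon. The arcs of $c\cap N_a$ that turn back do produce bigons between $c$ and $a'$ (or $a''$), but such bigons lie \emph{inside} $N_a$. The strategy is therefore to establish a bigon-confinement statement in the spirit of Lemma \ref{lem:ExtBigon1}, but now without any winding hypothesis: for $c\in\cal C$ with $I(a,b)=2$, every bigon formed by $c$ and $a'$ (respectively $a''$) is contained in $N_a$. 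Granting this, the whole discrepancy between $|c\cap a'|$ and $I(c,a')$ comes from arcs of $c$ turning back in the annulus $N_a$, so $|c\cap a'|$ equals $I(c,a')$ plus twice the number of these interior bigons, and similarly for $a''$. Invariance of $J(c,a)$ and $J(c,b)$ would then reduce to invariance of the number of such turning arcs.

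The main obstacle is exactly the bigon-confinement step, and this is precisely where the hypothesis $I(a,b)=2$ is indispensable. I would run the innermost-bigon analysis of Lemma \ref{lem:ExtBigon1}, pushing any bigon of $c$ with $a'$ that is not contained in $N_a$ through the rectangles of $N_{a\cup b}$; in the absence of the winding assumption the input that replaces it is Proposition \ref{Prop:gen:nodisk}, since an escaping bigon would ultimately force either a bigon between $c$ and the core $a$ itself or a disk component of $N\bez(a\cup b)$, both impossible when $I(a,b)=2$. The remaining delicate point, which I expect to be the hardest part of the bookkeeping, is to show that the interior correction is the same for $c$ and for an isotopic $c'\in\cal C$: passing from $c$ to $c'$ by a generic isotopy decomposes into finger moves that push an arc of the curve across a component of $N\bez(a\cup b)$ and back, and one must check that each such move leaves $|c\cap\partial N_a|$ and $|c\cap\partial N_b|$ unchanged. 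Here again Proposition \ref{Prop:gen:nodisk} is the key: because no complementary component is a disk, no finger move can create a single uncompensated bigon with $\partial N_a$, and the count is preserved.
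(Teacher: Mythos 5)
Your opening reduction is fine: for $c\in{\cal C}$ every component of $c\bez N_a$ crosses a single rectangle of $N_{b\bez a}$ from one $\partial N_a$-side to the other, so $J(c,a)=\frac{1}{2}|c\cap\partial N_a|$, and likewise for $b$. But both of the steps that carry the actual content have genuine gaps. First, your bigon-confinement step is justified by the claim that an escaping bigon would ultimately force ``a bigon between $c$ and the core $a$ itself,'' which you call impossible. It is not impossible here: membership in ${\cal C}$ (and in $\fal{X}_a$, $\fal{X}_b$) imposes no minimal-position condition, and Proposition \ref{Prop:gen2:J:invar} is applied in the proof of Proposition \ref{Prop:gen2:main} precisely to curves such as $d_1$, which may well form bigons with $a$ and with $b$ --- avoiding the minimality hypotheses of $X_a$, $X_b$ is the whole reason $J$ is introduced in place of $I$. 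The hypothesis $|c\cap a|=I(c,a)$, together with winding, is exactly what produces the contradiction in Lemma \ref{lem:ExtBigon1}; you drop both hypotheses but keep the punchline, so the confinement statement, even if true, is not established by the argument you sketch.

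Second, and more seriously, the final step is circular. Granting confinement, you have $|c\cap a'|=I(c,a')+2m_c$ with $m_c$ a count of interior bigons, and you must show $m_c=m_{c'}$ for isotopic $c,c'\in{\cal C}$ --- which, after your first reduction, is exactly the statement being proven. Your proposed mechanism (``a generic isotopy decomposes into finger moves across complementary components, and no single move creates an uncompensated bigon with $\partial N_a$'') is an assertion, not an argument: a generic isotopy decomposes into tangency moves with respect to $\partial N_a$, each of which does change $|c_t\cap\partial N_a|$ by $\pm 2$; the intermediate curves are not in ${\cal C}$, and nothing you say controls the net change between the two endpoints. Controlling precisely this global quantity is where the paper's proof does its work: it spans a (possibly nodal) annulus $M$ between $c$ and $c'$, and uses Proposition \ref{Prop:gen:nodisk} to conclude that either $M\podz N_{a\cup b}$ --- in which case each component of $M\cap r$, for $r$ a rectangle of $N_{a\bez b}\cup N_{b\bez a}$, is a square pairing exactly one arc of $c$ with exactly one arc of $c'$, forcing equal counts --- or else a complementary annulus lies inside $M$ and all four numbers equal $1$. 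Some such global object linking $c$ to $c'$ is what your proposal lacks; without it, the ``finger move'' step is simply a restatement of the proposition.
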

\begin{proof}
Suppose first that $|c\cap c'|>0$ and let $\Delta$ be a bigon formed by $c$ and $c'$. By taking the inner-most bigon, we can assume that the interior of $\Delta$ is disjoint from $c\cup c'$. Given that the boundary of $N_{a\cup b}$ is 
disjoint from $c\cup c'$, if a component of $N\bez N_{a\cup b}$ intersects $\Delta$, then this component must be a disk.
By Proposition \ref{Prop:gen:nodisk}, this case is not possible. Hence, $\Delta$ is contained in $N_{a\cup b}$. Moreover, we can assume
that the vertices of $\Delta$ are in the interior of rectangles of $N_{a\cup b}$. 

Fix a rectangle $r$ in $N_{a\bez b}\cup N_{b\bez a}$, and then let $\Delta_r$ be a connected component of $\Delta\cap r$. 
Given that $\Delta\subset N_{a\cup b}$ and $c,c'$ do not turn back in any of the rectangles of $N_{a\cup b}$, $\Delta_r$ must be
either a rectangle, a triangle, or a bigon with two sides being arcs of $c$ and $c'$ [Figure \ref{r18a}].
\begin{figure}[h]
 \begin{center}
\fig{0.94}{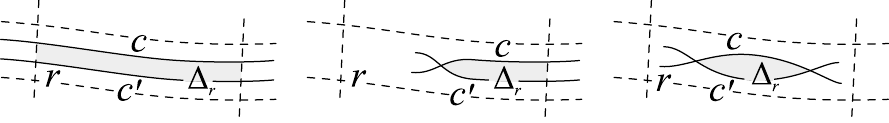}
 \caption{Arcs of $c$ and $c'$ in a rectangle $r$.}\label{r18a} %
 \end{center}
 \end{figure}
In any case, if we remove the bigon 
$\Delta$, that is, if we replace $c'$ with the circle $c''$  isotopic to $c'$ which is obtained by pushing $c'$ across $\Delta$ (Figure \ref{r18}(i)), then 
\[J(c'',a)=J(c',a)\text{ and }J(c'',b)=J(c',b).\]
\begin{figure}[h]
 \begin{center}
\fig{0.67}{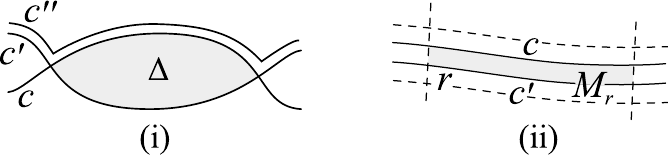}
 \caption{Arcs of $c$ and $c'$ in a rectangle $r$.}\label{r18} %
 \end{center}
 \end{figure}
Therefore, we can assume that $c$ and $c'$ are disjoint. This assumption means that an annulus $M$ in $N$ with boundary curves $c$ and $c'$ exists. 

If a component of $N\bez N_{a\cup b}$ intersects $M$, then this component must be an annulus with boundary curves
isotopic to $c$ and $c'$. In such a case $N$ is a nonorientable surface of genus 4 [Figure \ref{r20}(ii)] and
\[J(c,a)=J(c',a)=2\text{ and }J(c,b)=J(c',b)=2.\]
Finally, assume that $M$ is contained in $N_{a\cup b}$. As in the case of a bigon formed by $c$ and $c'$, if $r$ is
a rectangle in $N_{a\bez b}\cup N_{b\bez a}$ and $M_r$ is a connected component of $M\cap r$, then $M_r$
must be a rectangle with two sides being arcs of $c$ and 
$c'$ that connect opposite sides of $r$ [Figure~\ref{r18}(ii)].
This implies that $M_r$ gives in $r$ exactly one arc of $c$ and one arc of $c'$. This result means that 
$J(c,a)=J(c',a)$ and $J(c,b)=J(c',b)$.
\end{proof}
For two generic two-sided circles $a,b$ in $N$ such that $|a\cap b|=I(a,b)=2$, we define $\fal{X}_a$ as the set of isotopy classes 
of circles in $N$, which satisfy the following conditions:
\begin{enumerate}
 \item $c\in {\cal C}$,
 \item $J(c,a)<J(c,b)$,
 \item $c$ winds around $a$.
\end{enumerate}
Similarly, we define $\fal{X}_b$ by requiring (1) above and additionally
\begin{enumerate}
 \item[(2')] $J(c,b)<J(c,a)$,
 \item[(3')] $c$ winds around $b$.
\end{enumerate}

As an analog of Proposition \ref{Prop:main:g3}, we have
\begin{prop}\label{Prop:gen2:main}
 Let $a$ and $b$ be two generic circles in $N$ such that $I(a,b)=2$ and $N_{a\cup b}$ is nonorientable. Then for any integer $k\neq 0$, we have
 \[t_a^k(\fal{X}_b)\podz \fal{X}_a\quad\text{and}\quad t_b^k(\fal{X}_a)\podz \fal{X}_b.\]
\end{prop}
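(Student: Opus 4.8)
The plan is to imitate the proof of Proposition \ref{Prop:main:g3}, but to work throughout with the invariants $J(\cdot,a)$ and $J(\cdot,b)$ in place of the geometric intersection numbers, exploiting the fact that by Proposition \ref{Prop:gen2:J:invar} these are genuine isotopy invariants on $\cal{C}$. By symmetry it is enough to prove $t_a^k(\fal{X}_b)\podz\fal{X}_a$. First I would record a combinatorial description of the two invariants: for $c\in\cal{C}$ each component of $c\bez N_a$ is a single arc parallel to $b$ in one rectangle of $N_{b\bez a}$, so $J(c,a)$ counts the segments of $c$ parallel to $b$, and symmetrically $J(c,b)$ counts the segments of $c$ parallel to $a$. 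The key observation is a \emph{chaining identity}: if $c\in\cal{C}$ winds around $b$, then the arcs of $c$ parallel to $b$ (the segments in $N_{b\bez a}$ together with the crossing arcs in the two rectangles of $N_{a\cap b}$) close up into cycles following $b$, along which the two kinds of arc alternate; since $I(a,b)=2$ this forces a $1:1$ correspondence, so that $J(c,a)=|c\cap a|$. Symmetrically, if $c$ winds around $a$ then $J(c,b)=|c\cap b|$.

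Next I would fix $c\in\fal{X}_b$, form $d=t_a^k(c)$ as in Proposition \ref{Prop:main:g3}, and bring it into $\cal{C}$ by removing its turn-backs (the reductions of type I), calling the result $d'$. Crucially, I would \emph{not} pass to a minimal position with respect to $b$. On the one hand the wrapping produced by $t_a^k$ fills every rectangle of $N_{a\bez b}$, so $d'$ winds around $a$ exactly as in the corresponding step of Proposition \ref{Prop:main:g3}; this is condition (3) in the definition of $\fal{X}_a$. On the other hand $t_a^k$ is the identity on $N_{b\bez a}$, hence fixes every segment of $c$ parallel to $b$ and preserves the crossing arcs in both rectangles of $N_{a\cap b}$; removing turn-backs destroys neither, so $d'$ still winds around $b$ as well. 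Thus $d'$ lies in $\cal{C}$ (condition (1)) and winds around $a$ (condition (3)), and it remains to verify condition (2), i.e.\ $J(d',a)<J(d',b)$.

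For the count I would apply the chaining identity in both directions to the single circle $d'$, which winds around $a$ and around $b$ simultaneously. Since $d'$ winds around $b$ we get $J(d',a)=|d'\cap a|$; as neither $t_a^k$ nor the removal of turn-backs changes the number of intersection points with $a$, this equals $|c\cap a|$, which in turn equals $J(c,a)$ by the chaining identity applied to $c$. Hence $J(d',a)=J(c,a)$. Since $d'$ winds around $a$ we get $J(d',b)=|d'\cap b|$, and here the wrapping is decisive: each of the $|c\cap a|$ arcs of $c$ meeting $a$ is carried $|k|$ times around $a$, and each turn around $a$ meets $b$ in $I(a,b)=2$ points, so (after checking that the turn-back removals do not cancel these crossings) $|d'\cap b|\geq 2|k|\,|c\cap a|$. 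Combining, $J(d',b)=|d'\cap b|\geq 2|k|\,J(d',a)>J(d',a)$ for every $k\neq 0$, which is condition (2). Since by Proposition \ref{Prop:gen2:J:invar} these inequalities depend only on the isotopy class, $t_a^k(c)\in\fal{X}_a$.

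The step I expect to be the main obstacle is the honest bookkeeping compressed into the last paragraph: one must verify that deleting the turn-backs leaves every segment of $d$ parallel to $b$ intact (so that $J(\cdot,a)$ is genuinely unchanged) and does not destroy the crossings with $b$ created by the wrapping (so that the lower bound on $|d'\cap b|$ survives, already for $|k|=1$). This is precisely where the hypothesis $I(a,b)=2$, Proposition \ref{Prop:gen:nodisk}, and the no-external-bigon analysis of Lemma \ref{lem:ExtBigon1} must be used. Conceptually, the essential point is that one must resist isotoping $d'$ into minimal position: doing so would collapse $J(d',a)$ and $J(d',b)$ back to $I(t_a^k(c),a)$ and $I(t_a^k(c),b)$, and the desired inequality would then fail exactly as in Examples \ref{ex:ex1}--\ref{ex:ex3}. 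The whole force of replacing $I$ by $J$ is that $J$ records the intersection numbers of the distinguished, non-minimal representative $d'$ that winds around both curves.
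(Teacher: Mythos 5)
Your overall strategy---construct $d=t_a^k(c)$, perform only the reductions of type I to obtain a circle $d'\in{\cal C}$ winding around $a$, and then invoke Proposition \ref{Prop:gen2:J:invar} so that reductions of types II--III never need to be considered---is exactly the structure of the paper's proof. However, the counting step, which is the heart of the matter, rests on three claims that are false. First, the ``chaining identity'' fails: $J(c,a)$ equals the number of \emph{all} arcs of $c\cap N_a$, and among these there may be arcs of type D (Figure \ref{r13}), which enter and leave $N_a$ through the same component of $\partial N_a$ without meeting $a$; such arcs contribute to $J(c,a)$ but not to $|c\cap a|$, and winding around $b$ does not exclude them---the paper's own count carries the term $n_D$ precisely for this reason. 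The alternation you invoke breaks down because a segment of $c$ parallel to $b$ need not continue into an arc crossing $a$ in $N_{a\cap b}$; it may turn into $N_{a\bez b}$, and $I(a,b)=2$ does not repair this. Second, $d'$ does \emph{not} wind around $b$: the twist $t_a^k$ is supported on $N_a$, which \emph{contains} the two rectangles of $N_{a\cap b}$, so the arcs of $c$ parallel to $b$ in those rectangles are destroyed by the spiralling; only the segments in $N_{b\bez a}$ survive, and that is not enough. (This is why the paper claims only that $d_1$ winds around $a$.) Hence the equality $J(d',b)=|d'\cap b|$ has no justification. Third, the bound $|d'\cap b|\geq 2|k|\,|c\cap a|$ is also false: for an arc of $c\cap N_a$ of type C the turn-back removal cancels exactly the crossings with $b$ created by the wrapping, so for $|k|=1$ such an arc may contribute no intersection with $b$ at all---this cancellation is the very phenomenon that Lemma \ref{LemSpecialPatology} is designed to control in the case $I(a,b)\geq 3$.

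The inequality you need, $J(d',a)<J(d',b)$, is nevertheless true, but it must be obtained by the direct count of the paper: writing $n_A,n_B,n_C,n_D$ for the numbers of arcs of $c\cap N_a$ of types A--D, one has
\[
J(d',a)=n_A+n_B+n_C+n_D,\qquad J(d',b)=2n_A+3n_B+n_C+n_D+(|k|-1)\cdot I(a,b),
\]
so that $J(d',b)-J(d',a)=n_A+2n_B+(|k|-1)\cdot I(a,b)>0$, because winding around $b$ forces $n_A>0$. Note that this difference can equal $1$ (take $n_B=0$, $|k|=1$), while $n_D$ may be arbitrarily large; consequently your stronger claim $J(d',b)\geq 2|k|\,J(d',a)$ cannot hold in general, and the same examples show that $J(d',a)=|c\cap a|$ fails whenever $n_D>0$. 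In short, the framework you set up is the right one, but the shortcut through the chaining identity replaces the one genuinely delicate computation with assertions that are incorrect.
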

\begin{proof}
 As in the proof of Proposition \ref{Prop:main:g3}, we concentrate on the inclusion $t_a^k(\fal{X}_b)\podz \fal{X}_a$.
 We begin by constructing the circle $d=t_a^k(c)$ and as before, we assume that $t_a^k$ twists to the right in $N_a$. We perform reductions of type I on $d$, and as a result, we obtain a circle 
 $d_1\in{\cal C}$ which winds around $a$. Observe that by Proposition \ref{Prop:gen2:J:invar}, showing that $J(d_1,b)>J(d_1,a)$ is sufficient (we do not need to focus on reductions of types II--III). 
 
 Let $n_A, n_B, n_C, n_D$ be numbers of arcs of $c\cap N_a$ of types A, B, C, D, respectively (Figure \ref{r13}). In particular,
 \[J(d_1,a)=n_A+n_B+n_C+n_D.\]
 To determine the number $J(d_1,b)$, suppose first that $|k|=1$. Each arc of $c\cap N_a$ of type A gives an
 arc of $d_1$ which goes once around $a$ and therefore gives $I(a,b)=2$ in $J(d_1,b)$. An arc of $c\cap N_a$ of type B gives
 $I(a,b)+1=3$ in $J(d_1,b)$, and an arc of type C gives $I(a,b)-1=1$. An arc of $c\cap N_a$ of type D does not change after the twist and gives 1 in $J(d_1,b)$. Finally, if $|k|>1$, then for each arc of $c\cap N_a$ of types A--C, we have additional $(|k|-1)\cdot I(a,b)=2(|k|-1)$ arcs of $d_1\bez N_b$. Hence, we proved the following formula:
 \[J(d_1,b)=2n_A+3n_B+n_C+n_D+2(|k|-1)\cdot I(a,c).\]
 Given that $c$ winds around $b$, we have $n_A>0$. Hence, $J(d_1,b)>J(d_1,a)$.
\end{proof}
\begin{uw}
 The proof of Proposition \ref{Prop:gen2:main} can be repeated with minimal changes when $I(a,b)\geq 2$ and no 
 component 
 of $N\bez N_{a\cup b}$ is a disk or an annulus (for example if $N=N_{a\cup b})$. However, if disks are present in the 
 complement of $N_{a\cup b}$, then Proposition \ref{Prop:gen2:J:invar} is not true and the situation becomes difficult.
\end{uw}
\section{Twists generating a free group}\label{sec:thm}
Recall the so called 'Ping Pong Lemma' (see for example Lemma~3.15 of \cite{MargaliFarb}).
\begin{lem}\label{PingPong}
 Suppose that a group $G$ acts on a set $Y$, and $Y_1,Y_2\podz Y$ are nonempty and disjoint. Let
 $g_1,g_2\in G$ such that for every nonzero integer $k$,
 \[g_1^k(Y_2)\podz Y_1\quad\text{and}\quad g_2^k(Y_1)\podz Y_2.\]
 Then the group generated by $g_1$ and $g_2$ is a free group of rank 2.\qed
\end{lem}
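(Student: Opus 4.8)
The plan is to show that no nonempty reduced word in $g_1$ and $g_2$ can represent the identity of $G$. Since the natural surjection from the free group $F_2$ onto $\gen{g_1,g_2}$ sends reduced words to products of the $g_i$, proving that every nonempty reduced word is nontrivial in $G$ shows that this surjection is injective, hence an isomorphism, which is exactly the claim. So I would fix an arbitrary nonempty reduced word
\[w=g_{s_1}^{a_1}g_{s_2}^{a_2}\cdots g_{s_n}^{a_n},\quad s_j\in\{1,2\},\ s_j\neq s_{j+1},\ a_j\neq 0,\]
and aim to prove $w\neq 1$ in $G$.

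The heart of the argument is the ping pong estimate for a word whose first and last syllables are powers of $g_1$. For such a $w$ I would evaluate it on $X_2$, reading the syllables from right to left: the rightmost syllable $g_1^{a_n}$ sends $X_2$ into $X_1$ by hypothesis, the next syllable $g_2^{a_{n-1}}$ sends $X_1$ into $X_2$, and so on, the target side alternating at each step in step with the alternation $s_j\neq s_{j+1}$ of the word. Because the leftmost syllable is again a power of $g_1$, the last image lands in $X_1$, so $w(X_2)\podz X_1$. As $X_1\cap X_2=\emptyset$ and $X_2\neq\emptyset$, the set $w(X_2)$ is disjoint from $X_2$, so $w$ cannot fix $X_2$ setwise; in particular $w\neq 1$.

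To handle an \emph{arbitrary} reduced word I would reduce to this special shape by conjugation, using that conjugation preserves triviality ($w=1$ if and only if $hwh^{-1}=1$). I would replace $w$ by $g_1^{m}wg_1^{-m}$ for a sufficiently large $m$. A short case check on the first and last syllables of $w$ (the four cases according to whether each outer syllable is a power of $g_1$ or of $g_2$) shows that, after absorbing the prepended and appended powers of $g_1$, the resulting word is still reduced and now both begins and ends with a nonzero power of $g_1$; the degenerate words $w=g_2^{a}$ become $g_1^{m}g_2^{a}g_1^{-m}$ and are covered in the same way. Applying the estimate of the previous paragraph to this conjugate then forces $w\neq 1$. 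Once every nonempty reduced word is shown nontrivial, $\gen{g_1,g_2}$ is free of rank $2$.

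The only genuinely delicate point I anticipate is the conjugation-and-reduction bookkeeping: one must check that conjugating by $g_1^{m}$ does not create unintended cancellation in the interior of the word and that both outer syllables really are nonzero powers of $g_1$. This is precisely why $m$ is taken large (avoiding the finitely many values that would cancel a leading or trailing exponent). Everything else is the mechanical right-to-left bouncing of $X_2$ between $X_1$ and $X_2$, and requires no further input beyond the disjointness $X_1\cap X_2=\emptyset$ and the nonemptiness of the two sets.
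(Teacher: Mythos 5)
Your proof is correct and complete. Note that the paper does not actually prove this lemma --- it only quotes it, citing Lemma 3.15 of the Farb--Margalit primer --- and your argument is essentially the standard proof found in that reference: the right-to-left ping-pong estimate showing $w(X_2)\subseteq X_1$ for any reduced word beginning and ending with a nonzero power of $g_1$ (which, by disjointness and nonemptiness of $X_1,X_2$, forces $w\neq 1$), followed by reduction of the general case via conjugation by $g_1^{m}$. The one delicate point, ensuring that $g_1^{m}wg_1^{-m}$ is still reduced with both outer syllables nonzero powers of $g_1$, you handle correctly by taking $m$ outside the finitely many exponents that would cancel a leading or trailing syllable; since conjugation touches only the outer syllables, no interior cancellation can occur, and the degenerate words $g_2^{a}$ are covered by the same device.
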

\begin{tw}\label{Main:thrm}
 Let $a$ and $b$ be two generic two-sided circles in a nonorientable surface $N$. If $I(a,b)\geq 2$, then the group
 generated by $t_a$ and $t_b$ is isomorphic to the free group of rank 2. 
\end{tw}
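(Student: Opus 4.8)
The plan is to deduce the theorem directly from the Ping Pong Lemma (Lemma~\ref{PingPong}), letting $G=\gen{t_a,t_b}$ act on the set $X$ of isotopy classes of generic circles on $N$, and splitting into the two cases $I(a,b)\geq 3$ and $I(a,b)=2$, which are precisely the cases covered by the two propositions proved in the previous sections.

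In the case $I(a,b)\geq 3$ I would set $g_1=t_a$, $g_2=t_b$, $X_1=X_a$ and $X_2=X_b$. Proposition~\ref{Prop:main:g3} supplies exactly the two required inclusions $t_a^k(X_b)\subseteq X_a$ and $t_b^k(X_a)\subseteq X_b$ for every nonzero integer $k$, so the only thing left to verify is the hypothesis on the sets themselves: that $X_a$ and $X_b$ are nonempty and disjoint. Disjointness is immediate from the definitions, since a circle $c\in X_a$ satisfies $I(c,a)<I(c,b)$ whereas $c\in X_b$ satisfies $I(c,b)<I(c,a)$, and these are incompatible. For nonemptiness I would check that the class of $a$ lies in $X_a$ and the class of $b$ lies in $X_b$: replacing $a$ by a parallel copy $a'$ pushed into $N_a$ gives a representative in ${\cal C}$ which winds around $a$ and satisfies $I(a',a)=0<I(a,b)=I(a',b)$, so all four defining conditions of $X_a$ hold; the verification $b\in X_b$ is symmetric. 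Lemma~\ref{PingPong} then yields that $\gen{t_a,t_b}$ is free of rank $2$.

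The case $I(a,b)=2$ is handled in the same way, now with $X_1=\fal{X}_a$ and $X_2=\fal{X}_b$: Proposition~\ref{Prop:gen2:main} provides the inclusions $t_a^k(\fal{X}_b)\subseteq\fal{X}_a$ and $t_b^k(\fal{X}_a)\subseteq\fal{X}_b$, disjointness of $\fal{X}_a$ and $\fal{X}_b$ follows from the opposite strict inequalities $J(c,a)<J(c,b)$ and $J(c,b)<J(c,a)$, and the memberships $a\in\fal{X}_a$, $b\in\fal{X}_b$ are checked exactly as above using the same parallel copies. Applying Lemma~\ref{PingPong} a second time finishes the proof.

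Since both Proposition~\ref{Prop:main:g3} and Proposition~\ref{Prop:gen2:main} may be quoted as already established, there is essentially no obstacle remaining at this stage; all of the genuine difficulty has been absorbed into proving those propositions and the technical results behind them (Lemmata~\ref{lem:double-double}, \ref{lem:ExtBigon1} and~\ref{LemSpecialPatology}). The only points that still demand care are the routine bookkeeping that the ping-pong sets are nonempty and disjoint, and the observation that the two-player form of Lemma~\ref{PingPong}, as stated, needs no hypothesis beyond this disjointness and nonemptiness, so that it applies verbatim in both cases.
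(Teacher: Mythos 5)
Your proposal is correct and takes essentially the same route as the paper: the paper also deduces Theorem~\ref{Main:thrm} from the Ping Pong Lemma (Lemma~\ref{PingPong}) applied to $X_a,X_b$ when $I(a,b)\geq 3$ and to $\fal{X}_a,\fal{X}_b$ when $I(a,b)=2$, quoting Propositions~\ref{Prop:main:g3} and~\ref{Prop:gen2:main} for the required inclusions and observing that the sets are disjoint with $a\in X_a,\fal{X}_a$ and $b\in X_b,\fal{X}_b$. The only difference is that you make explicit the verification of nonemptiness via parallel copies, which the paper leaves as an unstated observation.
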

\begin{proof}
If $I(a,b)\in\{2,3\}$ and $N_{a\cup b}$ is orientable, then we can repeat Ishida's proof \cite{Ishida} without any changes.

Let $(Y_a,Y_b)$ be equal to $(\widehat{X}_a,\widehat{X}_b)$ or $(\kre{X}_a,\kre{X}_b)$ or else $(\fal{X}_a,\fal{X}_b)$, where the sets $\widehat{X}_{a},\widehat{X}_{b},\kre{X}_{a},\kre{X}_{b},\fal{X}_{a},
\fal{X}_{b}$ are defined in Sections  \ref{sec:weak}, \ref{sec:g3}, and \ref{sec:2}. Observe that $Y_a$ and $Y_b$ satisfy
\[
   Y_a\cap Y_b=\emptyset,\ a\in Y_a,\ b\in Y_b.
\]
Hence, if $I(a,b)\in\{2,3\}$ and $N_{a\cup b}$ is nonorientable, or $I(a,b)\geq 4$, then the theorem follows from Lemma \ref{PingPong} and Propositions \ref{Main:prop:s3}, \ref{Main:prop:i3}, and \ref{Prop:gen2:main}.
\end{proof}

\bibliographystyle{abbrv}

\end{document}